\def\Aut{\operatorname{Aut}}
\def\End{\operatorname{End}}
\def\dim{\operatorname{dim}}
\def\id{\operatorname{id}}
\def\Dist{\operatorname{Dist}}
\def\Ind{\operatorname{Ind}}
\def\Ad{\operatorname{Ad}}
\def\C{\mathbb{C}}
\def\N{\mathbb{N}}
\def\G{\mathbb{G}}
\def\AA{\mathcal{A}}
\def\UU{\mathcal{U}}
\def\DD{\mathcal{D}}
\def\PP{\mathcal{P}}
\def\ZZ{\mathcal{Z}}
\def\a{\mathfrak{a}}
\def\b{\mathfrak{b}}
\def\c{\mathfrak{c}}
\def\m{\mathfrak{m}}
\def\n{\mathfrak{n}}
\def\p{\mathfrak{p}}
\def\g{\mathfrak{g}}
\def\h{\mathfrak{h}}
\def\k{\mathfrak{k}}
\def\l{\mathfrak{l}}
\def\s{\mathfrak{s}}
\def\o{\mathfrak{o}}
\def\d{\partial}
\def\ol{\overline}
\def\sub{\subseteq}
\def\xto{\xrightarrow}
\newtheorem{thm}{Theorem}[section]
\newtheorem{cor}[thm]{Corollary}
\newtheorem{lemma}[thm]{Lemma}
\newtheorem{prop}[thm]{Proposition}
\newtheorem{conj}[thm]{Conjecture}
\theoremstyle{definition}
\newtheorem{definition}[thm]{Definition}
\theoremstyle{remark}
\newtheorem{remark}[thm]{Remark}
\numberwithin{equation}{section}
\begin{document}
	\title{Ghost distributions on supersymmetric spaces II: basic classical superalgebras}
	
	\author[Alexander Sherman]{Alexander Sherman}
	
	\maketitle
	\pagestyle{plain}

	\begin{abstract} 
		We study ghost distributions on supersymmetric spaces for the case of basic classical Lie superalgebras.  We introduce the notion of interlaced pairs, which are those for which both $(\g,\k)$ and $(\g,\k')$ admit Iwasawa decompositions.  For such pairs we define a ghost algebra, generalizing the subalgebra of $\UU\g$ defined by Gorelik.  We realize this algebra as an algebra of $G$-equivariant operators on the supersymmetric space itself, and for certain pairs, the `special' ones, we realize our operators as twisted-equivariant differential operators on $G/K$.  We additionally show that the Harish-Chandra morphism is injective, compute its image for all rank one pairs, and provide a conjecture for the image when $(\g,\k)$ is interlaced.
	\end{abstract}

\section{Introduction}

Let $\g$ be one of the basic classical Lie superalgebras $\g\l(m|n)$, $\o\s\p(m|2n)$, $\mathfrak{d}(1|2;\alpha)$, $\g(1|2)$, or $\a\b(1|3)$.  Fix a nondegenerate invariant supersymmetric form $(-,-)$ on $\g$ and let $\theta$ be an involution of $\g$ which preserves $(-,-)$.  Write $\g=\k\oplus\p$ for the $(\pm1)$-eigenspace decomposition with respect to $\theta$, and set $\k'=\k_{\ol{0}}\oplus\p_{\ol{1}}$; in particular $\k'$ is the fixed points of the involution $\delta\circ\theta$, where $\delta(v)=(-1)^{\ol{v}}v$ is the canonical grading automorphism.   Let $G$ be an algebraic supergroup with $\operatorname{Lie}G=\g$ and $G_0$ reductive, and suppose that $\theta$ extends to an involution of $G$ and that $K$ is a subgroup of $G$ with $(G^\theta)^\circ\sub K\sub G^{\theta}$.  Here $(G^\theta)^\circ$ denotes the identity component of $G^\theta$.  Finally we set $K'$ to be the subgroup of $G$ with $K'_0:=K_0$ and $\operatorname{Lie}K':=\k'$.

In \cite{sherman2021ghost} it was shown that the action of $K'$ on $G/K$ enjoys many nice properties, with the foremost being that $\Dist(G/K,eK)=\UU\g/\UU\g\k$ is isomorphic, as a $K'$-module, to $\Ind_{\k_{\ol{0}}}^{\k'}\Dist(G_0/K_0,eK_0)$, where algebraically $\Dist(G_0/K_0,eK_0)=\UU\g_{\ol{0}}/\UU\g_{\ol{0}}\k_{\ol{0}}$. We set $\AA_{G/K}$ to be the $K'$-invariant distributions on $G/K$ supported at $eK$, which we call the space of ghost distributions of $G/K$. Then there is an explicit isomorphism of vector spaces $\Dist(G_0/K_0,eK_0)^{K_0}\to\AA_{G/K}$.  

Write $\a\sub\p_{\ol{0}}$ for a Cartan subspace, and let us suppose that we have an Iwasawa decomposition $\g=\k\oplus\a\oplus\n$; then we may define the Harish-Chandra morphism \newline $HC:\AA_{G/K}\to S(\a)$.  The collection of polynomials $HC(\AA_{G/K})\sub S(\a)$ form a module over $HC(\ZZ_{G/K})$, and carry representation-theoretic information about branching from $G$ to $K'$; in particular they detect when an injective indecomposable on the trivial module of $K'$ appears in highest weight $G$-submodules of $G/K$ (see \cref{lemma branching}).  The first important result we have is:
\begin{thm}\label{intro thm inj}
	The map $HC:\AA_{G/K}\to S(\a)$ is injective.
\end{thm}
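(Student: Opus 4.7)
The plan is to deduce injectivity of $HC$ from the classical Helgason injectivity of the Harish--Chandra map $HC_0:\Dist(G_0/K_0,eK_0)^{K_0}\to S(\a)$, using the explicit vector space isomorphism
$$\Phi:\Dist(G_0/K_0,eK_0)^{K_0}\longrightarrow\AA_{G/K}$$
recalled in the introduction (cf.\ \cite{sherman2021ghost}) as a bridge. Concretely, my aim is to establish an identity $HC(\Phi(v))=p\cdot HC_0(v)$ for some fixed nonzero $p\in S(\a)$; since $S(\a)$ is an integral domain and $HC_0$ is injective, multiplication by $p$ is injective, hence so is $HC\circ\Phi$, which forces $HC$ itself to be injective.

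For the computation I would pass to Iwasawa PBW coordinates $\UU\g\cong\UU\n\otimes S(\a)\otimes\UU\k$, which identifies $\UU\g/\UU\g\k\cong\UU\n\otimes S(\a)$ and realizes $HC$ as the projection to $1\otimes S(\a)$. Next, I would describe $\Phi$ in these coordinates: since $\Dist(G/K,eK)\cong\Ind_{\k_{\ol{0}}}^{\k'}\Dist(G_0/K_0,eK_0)\cong\Lambda(\p_{\ol{1}})\otimes\Dist(G_0/K_0,eK_0)$ (the second iso as vector spaces, by PBW), the image of $\Phi$ should be realized as left multiplication on $\UU\g/\UU\g\k$ by a $K_0$-invariant ``ghost volume'' $\omega\in\UU\p_{\ol{1}}\sub\UU\g$ representing a top generator of $\Lambda^{\max}\p_{\ol{1}}$. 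The core computation is then to evaluate $HC(\omega\cdot v)$: decomposing each $\xi\in\p_{\ol{1}}$ along $\g_{\ol{1}}=\k_{\ol{1}}\oplus\n_{\ol{1}}$ as $\xi=\xi^{\k}+\xi^{\n}$ and reordering $\omega\cdot v$ in PBW form, the $\xi^{\k}$-factors pushed to the right vanish modulo $\UU\g\k$, while the $\a$-parts of the iterated supercommutators $[\xi^{\k},\xi'^{\,\n}]$ get collected into the polynomial factor $p$.

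The main obstacle is bookkeeping this reordering: the supercommutator expansion of $\omega\cdot v$ produces a cascade of lower-order corrections, and one must verify that these assemble cleanly into multiplication by a single polynomial $p\in S(\a)\setminus\{0\}$. Nonvanishing of $p$ should reduce to the nondegeneracy of the pairing $\k_{\ol{1}}\times\n_{\ol{1}}\to\a$ induced by $[\cdot,\cdot]$ together with $(-,-)$, since $\k_{\ol{1}}$ and $\n_{\ol{1}}$ are paired complementary subspaces of the nondegenerate $\g_{\ol{1}}$. A useful simplification would be to reduce, via Weyl-group equivariance and restriction to rank-one subsystems, to a single restricted root at a time; and if a clean multiplicative identity proves elusive, it suffices to show that $HC\circ\Phi$ differs from $HC_0$ by any injective operator on $S(\a)$, which already forces $HC$ to be injective.
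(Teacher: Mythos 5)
Your proposal is essentially the paper's approach, but your primary hope is too strong and your hedge is what actually works. The paper realizes the isomorphism $\Phi$ explicitly as $z\mapsto v_{\k'}z$, writes $v_{\k'}$ in a PBW basis of $\p_{\ol{1}}$ built from root vectors $x_i=e_{\alpha_i}-\theta e_{\alpha_i}$, $y_i=e_{-\alpha_i}-\theta e_{-\alpha_i}$, and carries out exactly the reordering you describe: replace $x_i$ modulo $\n$ by $(e_{\alpha_i}+\theta e_{\alpha_i})\in\k_{\ol 1}$, push it to the right, and collect the $\a$-components $h_{\ol{\alpha_i}}=h_{\alpha_i}-\theta h_{\alpha_i}$ of the supercommutators with the $y_i$. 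However, the result is \emph{not} the clean multiplicative identity $HC(\Phi(z))=p\cdot HC_0(z)$; the reordering cascade produces lower-order corrections depending on $z$ that do not vanish. What the paper establishes (see \cref{cor highest order term}) is only the leading-term identity
\[
HC(v_{\k'}z)=(-1)^k\,h_{\ol{\alpha_k}}\cdots h_{\ol{\alpha_1}}\,HC(z)+\text{(lower-order terms)},
\]
where the product $\prod h_{\ol{\alpha_i}}$ is nonzero in $S(\a)$ because each $\alpha_i$ satisfies $\theta\alpha_i\neq\alpha_i$ (this is where preservation of the invariant form by $\theta$ enters). This is precisely your fallback: $HC\circ\Phi$ is a degree-raising operator on $S(\a)$ whose associated graded is multiplication by a fixed nonzero polynomial, hence injective. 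Your nondegenerate-pairing heuristic for nonvanishing of $p$ is morally right but the paper pins it down more concretely via the $h_{\ol{\alpha_i}}$. So: same strategy, but you should drop the expectation of an exact factorization and commit from the start to the filtration/leading-term argument.
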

We can also compute the degree and highest order term of $HC(\gamma)$ for $\gamma\in\AA_{G/K}$.  For representation-theoretic consequences of this result, see \cref{cor generic typicality}.  Note that \cref{intro thm inj} relies on the existence of a form which $\theta$ preserves, and that without this assumption the statement is not true.  

\subsection{Interlaced pairs} To further our study of $HC(\AA_{G/K})$, we show that certain pairs $(\g,\k)$ have a very close relationship to $(\g,\k')$.  Namely, we say that $(\g,\k)$ is interlaced if there exists $t\in A$ (where $A=\exp(\a)\sub GL(\g)$) such that if we set $\phi=\Ad(t)$ then $\phi\theta=\delta\theta\phi$, and further $\phi^2=\delta$ ($\delta$ is the grading operator).  This in particular implies that $\phi(\k)=\k'$ and $\phi(\k')=\k$.  The following remarkable result is crucial for many ideas in our work.
\begin{thm}\label{intro thm interlacing}
	A supersymmetric pair $(\g,\k)$ is interlaced if and only if both $(\g,\k)$ and $(\g,\k')$ admit Iwasawa decompositions.
\end{thm}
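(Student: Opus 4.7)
My plan is to reduce both the joint Iwasawa condition and the interlacing condition to the single structural vanishing $(\g_0)_{\ol{1}}=0$, where $\g_0$ denotes the centralizer of $\a$ in $\g$. Using that $\theta$ exchanges $\g_\alpha$ and $\g_{-\alpha}$ and that $\k\cap\a=0$, a direct projection onto restricted root components shows that the natural map $\k\oplus\a\oplus\n\to\g$ is automatically injective, so Iwasawa reduces to a super-dimension equality. Writing $\dim\k$ via $\m:=\g_0\cap\k$ and the $\theta$-identification of $\g_{\pm\alpha}$, this equality unpacks (given that $\a$ is a Cartan subspace of $\p_{\ol{0}}$) to $\p_{\ol{1}}\cap\g_0=0$; the analogous analysis for $(\g,\k')$, combined with $\p'_{\ol{1}}=\k_{\ol{1}}$, gives Iwasawa for $(\g,\k')$ iff $\k_{\ol{1}}\cap\g_0=0$. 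The conjunction is precisely $(\g_0)_{\ol{1}}=0$.

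For the forward direction, since $t\in A$ acts trivially on $\g_0$, the relation $\phi^2=\delta$ restricted to $\g_0$ forces $\id=\delta$ on $(\g_0)_{\ol{1}}$, hence $(\g_0)_{\ol{1}}=0$, and both Iwasawa decompositions follow from the previous paragraph. Conceptually, $\phi$ is a super Lie algebra automorphism sending the triple $(\k,\a,\n)$ to $(\k',\a,\n)$ and thereby transporting one Iwasawa decomposition onto the other.

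For the reverse direction, assume $(\g_0)_{\ol{1}}=0$. Since $t=\exp(H)$ acts on $\g_\alpha$ by the scalar $e^{\alpha(H)}$, both interlacing relations $\phi^2=\delta$ and $\phi\theta=\delta\theta\phi$ evaluate on $\g_\alpha$ to the single equation $e^{2\alpha(H)}=(-1)^{|\alpha|}$, which is self-consistent only if $\g_\alpha$ has pure parity so that $|\alpha|$ is well-defined. I would first establish pure parity of each nonzero restricted root space by exploiting the vanishing $[(\g_\alpha)_{\ol{0}},(\g_{-\alpha})_{\ol{1}}]\subset(\g_0)_{\ol{1}}=0$ together with nondegeneracy of the invariant form paired with $\theta$-equivariance; failing a uniform argument, the classification of basic classical symmetric superpairs that admit an Iwasawa decomposition allows a case-by-case verification. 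With purity in hand, additivity of parity under the super bracket makes $\alpha\mapsto|\alpha|$ a $\Z$-linear character $\Z\Sigma\to\Z/2$, which can then be lifted to a $\C$-linear functional on $\a^*$; this produces the desired $H\in\a$ and hence $t=\exp(H)\in A$. The main obstacle is the pure-parity claim in this step, with the rest reducing to dimension bookkeeping and lifting a character.
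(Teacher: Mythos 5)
Your reduction of the joint Iwasawa condition to the single vanishing $(\c(\a))_{\ol{1}}=0$ is correct and is, in effect, what the paper also uses: since $\h$ is purely even here, $(\c(\a))_{\ol{1}}=0$ is precisely the condition $0\notin\ol{\Delta}_{\ol{1}}$ appearing in the paper's lemma. Your verification that $\k\oplus\a\oplus\n\to\g$ is automatically injective (via $\theta$-antisymmetry across opposite restricted-root spaces) and that the only possible failure of the dimension count sits in $\c(\a)$ is sound, granting the classical fact $\c(\a)\cap\p_{\ol{0}}=\a$. Your forward direction is also a clean alternative to the paper's: the paper observes that $\phi$ transports an Iwasawa decomposition from $(\g,\k)$ to $(\g,\k')$ and then invokes that at least one of the two pairs admits one; your argument that $\Ad(t)|_{\c(\a)}=\id$ (since $t\in A$) together with $\Ad(t^2)=\delta$ forces $(\c(\a))_{\ol{1}}=0$ gets the same conclusion more directly, without appealing to the dichotomy.

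The genuine gap is in the backward direction, and you have correctly identified it yourself: you need each \emph{restricted} root space $\g^{\ol{\alpha}}$ to be of pure parity, so that $\ol{\alpha}\mapsto|\ol{\alpha}|$ is a well-defined $\Z/2$-character of $\Z\ol{\Delta}$ that you can then lift. Your proposed uniform argument ($[(\g^{\ol{\alpha}})_{\ol{0}},(\g^{-\ol{\alpha}})_{\ol{1}}]\sub(\c(\a))_{\ol{1}}=0$ plus nondegeneracy of the form) does not obviously close: the vanishing of a bracket component does not directly constrain the parity decomposition of $\g^{\ol{\alpha}}$, and one still has to handle the roots $\alpha$ with $\theta\alpha=\alpha$. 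The paper sidesteps the global pure-parity question entirely. It does not try to make the parity into a linear character on all of $\Z\ol{\Delta}$; instead it locates, for each pair in the classification with $0\notin\ol{\Delta}_{\ol{1}}$, a $\theta$-stable irreducible component $V_i$ of the ``real'' root system whose projected odd roots form a single $W$-orbit (up to sign) generating a lattice containing the projected even roots with index two, and then produces $t\in A_i\sub A$ acting by $\pm i$ on $\g_{\ol{1}}$ and $\pm 1$ on $\g_{\ol{0}}$. Both routes are ultimately case analyses over the same classification table, but the paper's is more robust precisely because it only needs the index-two lattice condition inside one $V_i$, whereas your route needs a global pure-parity statement that must be checked in every case and for which you do not give a proof. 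In particular, as written, your step ``additivity of parity under the super bracket makes $\alpha\mapsto|\alpha|$ a $\Z$-linear character'' presupposes exactly the claim you say you would need to establish first; until that is done the argument is circular at that point.

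A smaller point: when lifting the $\Z/2$-character to $H\in\a$ you also need to confirm the normalizations $\Ad(t^2)=\delta$ and $t^4=1$ of the paper's definition. The first is exactly $e^{2\ol{\alpha}(H)}=(-1)^{|\ol{\alpha}|}$ as you say; the second follows once $\Ad(t^4)=\id$ because $A\sub\operatorname{Inn}(\g)$ acts faithfully, but you should state this rather than leave it implicit.
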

Note that the forward direction is clear; for the backward direction we use the classification of supersymmetric pairs of the type we consider along with properties of their reduced root systems.

\subsection{Ghost centre}\label{sec_intro_ghost} The property of a pair being interlaced gives it structure that is similar to the diagonal pair $(\g\times\g,\g)$.  Write $\ZZ_{G/K}$ for the $K$-invariant distributions on $\Dist(G/K,eK)$. Then the space $\widetilde{\ZZ_{G/K}}:=\ZZ_{G/K}+\AA_{G/K}$ may be endowed with the natural structure of an algebra, which we call the ghost algebra of  $G/K$, and it is such that the map $HC:\widetilde{\ZZ_{G/K}}\to S(\a)$ is an algebra homomorphism.  In particular, we obtain in this case that 
\[
HC(\AA_{G/K})HC(\AA_{G/K})\sub HC(\ZZ_{G/K}).
\]
Note that in \cite{sherman2021ghost} it was shown that $HC(\AA_{G/K})$ is always a module over $HC(\ZZ_{G/K})$, even when $(\g,\k)$ is not interlaced. 

Remarkably, we may lift $\widetilde{\ZZ_{G/K}}$ to an algebra of $G$-equivariant operators on $G/K$:
\begin{thm}\label{intro thm lifting to ops}
	Suppose that $(\g,\k)$ is interlaced with interlacing automorphism $\phi=\Ad(t)$, and assume that $t^2\in K(\Bbbk)$.  Then we have an injective morphism of algebras
	\[
\widetilde{\ZZ_{G/K}}\to\End_{G}(\Bbbk[G]), \ \ \ \ u\mapsto \tilde{u}.
	\]
\end{thm}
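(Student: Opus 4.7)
The plan is to combine right-invariant differentiation $R\colon\UU\g\to\End_G(\Bbbk[G])$ with right translation $R_t\colon f(g)\mapsto f(gt)$ by the interlacing element. Both operators commute with left translation (hence lie in $\End_G(\Bbbk[G])$), and together they satisfy the fundamental relation
\[
R_t R(v) = R(\phi(v))\,R_t, \qquad v\in\UU\g,
\]
where $\phi = \Ad(t)$; the hypothesis $t^2\in K(\Bbbk)$ says that $R_t^2 = R_{t^2}$ is right translation by an element of $K$. Concretely, I would fix a section of $\UU\g\to\UU\g/\UU\g\k$ (for instance the PBW complement induced by $\g=\k\oplus\p$), yielding lifts $\hat u\in\UU\g$ for each $u\in\ZZ_{G/K}$ and $\hat\gamma\in\UU\g$ for each $\gamma\in\AA_{G/K}$, and then set
\[
\tilde u := R(\hat u), \qquad \tilde\gamma := R(\hat\gamma)\,R_t,
\]
extending by linearity to $\widetilde{\ZZ_{G/K}}$.

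Well-definedness of $\tilde\gamma$ requires care: a change $\hat\gamma\mapsto\hat\gamma+wk$ with $k\in\k$ shifts $\tilde\gamma$ by $R(w)R_t R(\phi^{-1}(k))$, and since the interlacing identity gives $\phi^{-1}(\k)=\k'$, this shift is an operator involving $R(\k')$ on the right. The $K'$-invariance of $\gamma$ modulo $\UU\g\k$ must be invoked here (together with the $\phi$-twist) to show the ambiguity is absorbable. The algebra property then splits into the four cases $\ZZ\cdot\ZZ$, $\ZZ\cdot\AA$, $\AA\cdot\ZZ$, and $\AA\cdot\AA$. The first three follow from the commutation relation above together with $R$ being a ring map. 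The delicate and main case is $\AA\cdot\AA\subseteq\ZZ$, where one computes
\[
\tilde\gamma\,\tilde\eta = R(\hat\gamma)R_t R(\hat\eta)R_t = R(\hat\gamma\cdot\phi(\hat\eta))\,R_{t^2},
\]
and here one uses $t^2\in K(\Bbbk)$ to absorb $R_{t^2}$ into $R(\UU\g)$ modulo $\UU\g\k$, matching $\widetilde{\gamma\eta}=R(\widehat{\gamma\eta})$.

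The principal obstacle is precisely this last case: it demands that the abstract ghost-algebra multiplication on $\widetilde{\ZZ_{G/K}}$ (generalizing Gorelik's construction) coincide with the $\phi$-twisted product appearing naturally on the operator side, and that the seemingly nontrivial translation $R_{t^2}$ reduce to a right-invariant differential operator modulo $\UU\g\k$. Once that compatibility is in place, injectivity follows: $R$ is injective on $\UU\g$, and the subspaces $R(\UU\g)$ and $R(\UU\g)R_t$ are linearly independent inside $\End_G(\Bbbk[G])$ since $R_t$ is a genuine group translation not in the image of $R$, so the kernel of $u+\gamma\mapsto\tilde u+\tilde\gamma$ splits along the $\ZZ\oplus\AA$ decomposition and vanishes on each summand.
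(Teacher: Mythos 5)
Your plan is in the right spirit---the paper's operator does combine differentiation with right translation by $t$---but both the formulation and the key verifications are importantly different, and the differences are not cosmetic.

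First, well-definedness. You define $\tilde\gamma = R(\hat\gamma)R_t$ by picking a PBW lift $\hat\gamma$ of $\gamma \in \UU\g/\UU\g\k$, and you correctly observe that changing $\hat\gamma$ by $wk$, $k \in \k$, shifts $\tilde\gamma$ by an operator ending in $R(\phi^{-1}(k)) \in R(\k')$. But $R(\k')$ does \emph{not} annihilate $\Bbbk[G/K]$, so this shift is genuinely nonzero; the ambiguity is not absorbable, and invoking ``$K'$-invariance of $\gamma$'' does not dissolve it. The underlying issue is the order of composition: your $R_t$ is applied \emph{first}, sending $\Bbbk[G/K]$ to $\Bbbk[G/K']$, so the subsequent differential operator sees $\hat\gamma$ only modulo $\UU\g\k'$, while $\gamma$ determines $\hat\gamma$ only modulo $\UU\g\k$. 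The paper avoids this entirely by putting the translation \emph{last}: it sets $\tilde u = R_t^* \circ (1\otimes u)\circ a^*$, where $u \in \AA_{G/K}$ is used directly as a functional on $\Bbbk[G/K]$ (no lift to $\UU\g$ needed), $(1\otimes u)\circ a^*$ maps $\Bbbk[G/K] \to \Bbbk[G/K']$ by $K'$-invariance of $u$, and $R_t^*$ then carries $\Bbbk[G/K']$ back to $\Bbbk[G/K]$. This is manifestly independent of any lift.

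Second, the absorption of $R_{t^2}$ in the $\AA\cdot\AA$ case is simpler than you suggest: since $t^2 \in K(\Bbbk)$, the map $R_{t^2}^*$ is \emph{literally the identity} on $\Bbbk[G]^K = \Bbbk[G/K]$. There is nothing to ``reduce to a right-invariant differential operator modulo $\UU\g\k$''; the factor simply disappears. With the paper's ordering the computation $\tilde u_1 \tilde u_2 = \widetilde{\phi(u_1)u_2}$ then falls out directly, matching the ghost-algebra product $u_1 \cdot u_2 := \phi(u_1)u_2$ on the nose (and likewise $\tilde u \tilde z = \widetilde{uz}$, $\tilde z\tilde u = \widetilde{\phi(z)u}$).

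Third, injectivity. Your argument appeals to injectivity of $R$ on $\UU\g$ and linear independence of $R(\UU\g)$ and $R(\UU\g)R_t$ inside $\End_G(\Bbbk[G])$. But the morphism lands in $\End_G(\Bbbk[G/K])$ (the $\Bbbk[G]$ in the introduction's statement is a typo for $\Bbbk[G/K]$; compare the version in Section 6), and as a map to operators on $\Bbbk[G/K]$, $R$ is far from injective on $\UU\g$. The actual argument uses the injectivity of $HC$ on $\ZZ_{G/K}$ (from \cite{alldridge2012harish}) and on $\AA_{G/K}$ (\cref{thm_injectivity}) to get injectivity on each summand, together with the observation that $\tilde z$ is a differential operator on $G/K$ while $\tilde u$ for $u \neq 0$ in $\AA_{G/K}$ is \emph{not} (its residue at $eK$ lands in $\Dist(G/K, tK)$, not $\Dist(G/K, eK)$), so the two images cannot overlap.
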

See \cref{sec lifting ops} for the definition of $\tilde{u}$ when $u\in\AA_{G/K}$.  Note that $\tilde{u}$ will \emph{not} be a differential operator when $u\notin\ZZ_{G/K}$.   However for certain interlaced pairs this issue can be remedied, namely when $t$ can be chosen to be central in $G_0$, as we see next.

\subsection{Special pairs} We call a supersymmetric pair $(\g,\k)$ `special' if it is interlaced, and the interlacing element $t$ can taken to be central in $G_0$.  The special pairs are exactly $(\g\l(m|2n),\o\s\p(m|2n))$ and $(\o\s\p(2|2n),\o\s\p(1|2r)\times\o\s\p(1|2n-2r))$.  In this case, for $u\in\AA_{G/K}$, we may set 
\[
D_u:=L_{t^{-1}}^*\circ R_t^*\circ(1\otimes u)\circ a^*,
\]
where $R_t,L_{t^{-1}}$ are respectively left and right translation by $t$ on $G$.  Then in Section 7 it is shown that $D_u$ is an $\Ad(t)$-twisted equivariant differential operator; in particular it is $G_0$-equivariant.  In fact, there we obtain a subalgebra $\DD^{G,\cdot}(G/K)\sub \DD(G/K)$ consisting of differential operators which are $\Ad(z)$ twisted equivariant for some $z$ in the center of $G_0$.  Thus we obtain an analogue of the full ghost centre as introduced in \cite{sherman2021ghost}.

The action of this algebra on $\C[G/K]$, and in particular its eigenvalues on highest weight functions, would be very interesting to understand.  Because they are differential operators, there is some hope that the methods of \cite{alldridge2012harish} may apply, for instance, to help prove Weyl group invariance.

\subsection{$HC(\AA_{(\g,\k)})$ for rank one pairs} In the last section we compute $HC(\AA_{(\g,\k)})$ for all rank one pairs.  Here $\AA_{(\g,\k)}=(\UU\g/\UU\g\k)^{\k'}$.  Here is a list of our results; we use standard notation for root systems, and in particular follow the conventions from \cite{sherman2020iwasawa}.
\begin{enumerate}
	\item[(i)] $(\g\l(m|n),\g\l(m-1|n)\times\g\l(1))$: let $t=\frac{1}{2}h_{\epsilon_1-\epsilon_m}$; then
	\[
	HC(\AA_{(\g,\k)})= \Bbbk[t(t-n+m-1)]\langle t(t-1)\cdots(t-(n-1))\rangle.
	\]
	\item[(ii)] $(\o\s\p(2|2n),\o\s\p(1|2n))$: let $t=h_{\epsilon_1}$, where $\epsilon_1(h_{\epsilon_1})=1$; then 
	\[
	HC(\AA_{(\g,\k)})=\{p\in \Bbbk[t]:p(n+r)=(-1)^{r}p(n-r):1\leq r\leq n\},
	\]
	or more explicitly:
	\[
	\Bbbk[t(t-2n)]\langle (t-1)(t-3)\cdots(t-(2n-1)),t(t-2)\cdots(t-2n)\rangle,
	\]
	\item[(iii)] $(\o\s\p(m|2n),\o\s\p(m-1|2n))$, $m\geq3$: let $t=h_{\epsilon_1}$; then
	\[
	HC(\AA_{(\g,\k)})= \Bbbk[t(t-2n+m-2)]\langle(t-1)(t-3)\cdots(t-(2n-1))\rangle.
	\]
	\item[(iv)] $(\o\s\p(m|2n),\o\s\p(m|2n-2)\times\s\p(2))$, $n\geq 2$: let $t\in\a$ be such that $(\delta_1+\delta_2)(t)=1$; then
	\[
	HC(\AA_{(\g,\k)})= \Bbbk[t(t+2n-m-1)]\langle (t+1)t(t-1)\cdots(t-(m-2))\rangle
	\]
\end{enumerate}

We observe that if $(\g,\k)$ is not interlaced, then we do not have any kind of Weyl group (anti-)invariance.  The conditions determining $HC(\AA_{(\g,\k)})$ in this general case remain mysterious.  However for interlaced pairs we have a coherent conjecture, as is explained in what follows.

\subsection{Conjecture for interlaced pairs}\label{sec_intro_conj} Let $\ol{\Delta}$ denote the reduced root system of $(\g,\k)$ with respect to a chosen Cartan subspace $\a$, and set $\ol{\rho}$ to be the restricted Weyl vector.  Let $\ol{\Delta}_{ev}=\ol{\Delta_{\ol{0}}}\setminus\{0\}$, that is the non-zero projections of even roots to $\a^*$.  Set $\ol{\Delta}_{odd}:=\ol{\Delta}\setminus\ol{\Delta}_{ev}$.  For $\alpha\in\ol{\Delta}_{ev}$, write $r_{\alpha}$ for the reflection on $\a^*$ determined by $\alpha$.  Let $W$ denote the subgroup of $GL(\a)$ generated by $r_{\alpha}$, for all $\alpha\in\ol{\Delta}_{ev}$. For $p\in S(\a)$ and $w\in W$, we write $(w.p)(\lambda)=p(w(\lambda+\ol{\rho})-\ol{\rho})$.

%, and set
%\[
%\ol{\Delta}_{ev,1}=\ol{\Delta}_{ev}\cap (\ol{\Delta}_{\ol{1}}\cup2\ol{\Delta}_{\ol{1}}).
%\]
%Then set 
%\[
%\ol{\Delta}_{ev,0}=\ol{\Delta}_{ev}\setminus\ol{\Delta}_{ev,1}.
%\]
%Let $W'$ be the subgroup of the little Weyl group $W$ generated by reflections in the roots of $\ol{\Delta}_{ev,0}$.  Then $W'$ is of index 1 or 2 in $W$, and we write $r:W/W'\to\{\pm1\}$ for the injective homomorphism of groups.  

%Finally, set $\ol{\Delta}_{odd}$ to be the nonzero projections of $\Delta_{\ol{1}}$ to $\a^*$.   
\begin{conj}\label{conj_intro}
	Assume that $(\g,\k)$ is interlaced. Then $HC(\AA_{(\g,\k)})$ is given by the set of $p\in S(\a)$ satisfying:
	\begin{enumerate}
		\item[(i)] for $\alpha\in\ol{\Delta}_{ev}$ with $n_{\alpha}=\frac{1}{2}\dim(\g_{\alpha})_{\ol{1}}$, we have
		 \[
		(r_{\alpha}.p)=(-1)^{n_{\alpha}}p;
		\]
		\item[(ii)] for $\alpha\in\ol{\Delta}_{odd}$, and $(\lambda+\ol{\rho},\alpha)=0$, we have
		\[
		p(\lambda+r\alpha)=(-1)^{r}p(\lambda-r\alpha)
		\]
		for $1\leq r\leq \frac{1}{2}\dim(\g_{\alpha})_{\ol{1}}$.
	\end{enumerate}
\end{conj}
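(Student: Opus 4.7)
The natural strategy is to prove necessity of conditions (i)--(ii) by reduction to the rank-one setting, and sufficiency by combining the structural consequences of interlacing with a Kostant-type separation argument.

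For necessity, the plan is as follows. Given $\gamma \in \AA_{(\g,\k)}$ and a restricted root $\alpha \in \ol{\Delta}$, construct a rank-one sub-supersymmetric pair $(\g^\alpha, \k^\alpha) \subset (\g,\k)$ whose restricted root system is $\{\pm\alpha\}$, with the same multiplicity data as for $\alpha$ inside $\g$, together with a parabolic restriction map $\AA_{(\g,\k)} \to \AA_{(\g^\alpha,\k^\alpha)}$ intertwining the two Harish-Chandra morphisms up to the shift $\ol{\rho} - \ol{\rho}^\alpha$. The existence of such subpairs, and the fact that interlacing descends to them, is strongly suggested by \cref{intro thm interlacing}, since admitting simultaneous Iwasawa decompositions for $(\g,\k)$ and $(\g,\k')$ is essentially local in the root data. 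One then reads off conditions (i) and (ii) directly from the rank-one classification: cases (iii) and (iv) of the previous subsection furnish the $(-1)^{n_\alpha}$-invariance of (i) for even $\alpha$, while the $\g\l$-type case (i) (together with case (ii)) yields the half-integer parity in (ii) along odd root hyperplanes.

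For sufficiency, I would exploit the interlacing $\phi$ in tandem with an analog of Harish-Chandra's isomorphism for $(\g,\k')$. Since $\phi$ is an isomorphism of pairs $(\g,\k) \to (\g,\k')$ fixing $\a$ pointwise, it identifies $\AA_{(\g,\k)}$ with $\AA_{(\g,\k')}$ and transports one Harish-Chandra image to the other via an explicit twist of $S(\a)$ by $\phi^*$. Together with the product containment $HC(\AA_{(\g,\k)}) \cdot HC(\AA_{(\g,\k)}) \subset HC(\ZZ_{G/K})$ from \cref{sec_intro_ghost}, this places $HC(\AA_{(\g,\k)})$ inside a square-root extension of $HC(\ZZ_{G/K})$. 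Now $HC(\ZZ_{G/K})$ should be describable, in the style of Sergeev-Veselov or Alldridge-Sahi, by even Weyl invariance with odd-root compatibility conditions; the sign-antiinvariance of (i) and the parity of (ii) are precisely the constraints that a square root of such an invariant ring must satisfy.

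The main obstacle is the dimension match in sufficiency: one must prove that every polynomial satisfying (i)--(ii) is actually realized by some $\gamma \in \AA_{(\g,\k)}$, rather than merely lying in the putative square-root extension. This requires a separation-of-variables theorem expressing $\AA_{(\g,\k)}$ as a free, or at least well-controlled, module over $\ZZ_{G/K}$, which is currently open even for many classical supersymmetric pairs. A natural first step is to verify the conjecture for the special pairs, where \cref{intro thm lifting to ops} lifts ghost elements to twisted-equivariant differential operators on $G/K$; their eigenvalues on highest weight sections can then be computed explicitly and compared with the conjectured image. Extending beyond this case appears to demand genuinely new input.
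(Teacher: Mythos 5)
The statement you were asked to prove is stated in the paper as a \emph{conjecture} (\cref{conj_intro}), not a theorem; the author explicitly defers the proof to future work and offers only supporting evidence: the group-like case from Gorelik, agreement with the rank-one computations of Section 9, and the "square-root" relation $HC(\AA_{(\g,\k)})^2 \subset HC(\ZZ_{(\g,\k)})$ combined with the Alldridge description of $HC(\ZZ_{(\g,\k)})$. Since the paper contains no proof, there is nothing to compare your argument against, and your proposal should be judged on its own terms as a program rather than a proof.

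On those terms your sketch is sensible and its main ideas track the evidence the paper assembles: necessity via reduction to rank one (a route the paper itself suggests in Section 9), and sufficiency via the square-root containment plus a dimension match. You are also right to flag the two genuine open points. First, the reduction step needs more than an appeal to \cref{intro thm interlacing}: one must actually produce, for each $\alpha \in \ol{\Delta}$, a rank-one subpair $(\g^\alpha,\k^\alpha)$ that is itself \emph{interlaced} and carries a well-defined parabolic-descent map $\AA_{(\g,\k)} \to \AA_{(\g^\alpha,\k^\alpha)}$ intertwining the Harish-Chandra morphisms after a $\ol\rho$-shift. Interlacing is a global condition (it requires a specific element $t\in A$ with $\Ad(t^2)=\delta$), and it is not obvious that it localizes to root subsystems; this would need a separate argument, likely case-by-case using the table in Section 4. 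Second, as you observe, sufficiency requires a separation-of-variables statement controlling $\AA_{(\g,\k)}$ as a module over $\ZZ_{(\g,\k)}$; without it one only gets $HC(\AA_{(\g,\k)})$ inside the square-root locus, not equality. Your suggestion to first treat the special pairs via the twisted-equivariant differential operators of Section 7 is a reasonable place to start, but note that even there the paper does not compute the eigenvalues of these operators on highest-weight vectors, so this too remains to be done. One small slip: in your necessity discussion, condition (ii) of the conjecture involves integer $r$ with $1\le r\le \tfrac12\dim(\g_\alpha)_{\ol 1}$, so it is an integer-parity (alternating sign) condition along the odd-root hyperplane, not a half-integer condition.
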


\begin{remark}
	We first observe that the above conjecture is valid for all group-like cases $(\g\times\g,\g)$, as shown in \cite{gorelik2000ghost}.  All interlaced cases that we have computed in this paper also satisfy the above conjecture.  Further, observe that if $\alpha\in\ol{\Delta}_{odd}$ and $(\alpha,\alpha)=0$, then in fact the second condition implies that
	\[
(h_{\alpha}+(\ol{\rho},\alpha))|HC(D)
	\]
	for all $D\in\AA_{(\g,\k)}$.  (See \cref{iso_root_vanishing_general} for the case when $\alpha$ is simple)
\end{remark}

\begin{remark}
	From \cite{alldridge2012harish}, one may deduce the following description of $HC(\ZZ_{(\g,\k)})$: it is given by the set of $p\in S(\a)$ such that 
	\begin{enumerate}
		\item[(i)] $w. p=p$ for all $w\in W$;
	\item[(ii)] for $\alpha\in\ol{\Delta}_{\ol{1}}$, and $(\lambda+\ol{\rho},\alpha)=0$, we have
\[
p(\lambda+r\alpha)=p(\lambda-r\alpha)
\]
for $1\leq r\leq \frac{1}{2}\dim(\g_{\alpha})_{\ol{1}}$.
	\end{enumerate}
Thus our conditions are a `square-root' of the conditions on $HC(\ZZ_{(\g,\k)})$, which is a necessity for interlaced pairs since $HC(\AA_{(\g,\k)})^2\sub HC(\ZZ_{(\g,\k)})$ as explained in \cref{sec_intro_ghost}.
\end{remark}

\subsection{Future directions} In future work, we look to prove \cref{conj_intro}, and understand what occurs in the cases of non-interlaced pairs.  Further, for the special pairs we want to compute the structure of the algebra of twisted-equivariant differential operators.  In general, the structure of the ghost centre $\widetilde{\ZZ_{(\g,\k)}}$ for interlaced pairs is not understood, and should be determined.
  
\subsection{Summary of sections}  In section 2 we recall the setup and results of \cite{sherman2021ghost}.  Section 3 is devoted to the proof of the injectivity of the Harish-Chandra morphism and its consequences.  Section 4 defines the notion of interlaced pairs, and proves \cref{intro thm interlacing}.  Section 5 constructs the ghost algebra of $G/K$ for interlaced pairs, and Section 6 explains how to lift to equivariant operators in this case.  Section 7 looks at special supersymmetric pairs, showing we may construct a `full' ghost algebra, and explains how to lift these invariant distributions to twisted-equivariant differential operators, giving an algebra of differential operators on $G/K$.  Section 8 discusses two tools which may be used to help compute $HC(\AA_{G/K})$.  Finally Section 9 computes $HC(\AA_{G/K})$ in every rank one.

\subsection{Acknowledgements}  The author is grateful to Alexander Alldridge, Maria Gorelik, Thorsten Heidersdrof, Shifra Reif, and Vera Serganova for numerous helpful discussions.   This research was partially supported by ISF grant 711/18 and NSF-BSF grant 2019694.

\section{Recollections}

In what follows, $\Bbbk$ denotes an algebraically closed field of characteristic zero.

We work with the same notation and setup as in \cite{sherman2021ghost} except that we restrict to certain Lie supergroups and Lie superalgebras.  In particular, unless stated otherwise, $\g$ will always denote one of the Lie superalgebras $\g\l(m|n)$, $\o\s\p(m|2n)$, $\mathfrak{d}(1,2;a)$, $\g(1,2)$, or $\a\b(1,3)$.   Thus $\g$ is quasireductive and admits a nondegenerate, invariant supersymmetric bilinear form which we denote by $(-,-)$.   For more on the basic properties of these Lie superalgebras we refer to \cite{musson2012lie}.  Further, $G$ will denote a quasireductive Lie supergroup with $\operatorname{Lie}G=\g$.  For more on quasireductive Lie supergroups we refer to \cite{serganova2011quasireductive}.

We are again interested in supersymmetric pairs.  Throughout, $\theta$ will denote an involution of $\g$ which preserves the form $(-,-)$, with fixed point subalgebra $\k$ and $(-1)$-eigenspace $\p$.  In particular $\k$ will be quasireductive and itself admits a nondegenerate, invariant supersymmetric form.  We have an explicit classification of the pairs $(\g,\k)$ that we consider, given in Section 4.  We will assume that $G$ is such that $\theta$ lifts to an involution of $G$, and by abuse of notation we also denote this involution by $\theta$.  Then we let $K$ be any quasireductive subgroup of $G$ which satisfies $(G^\theta)^\circ\sub K\sub G^\theta$, where $(-)^\circ$ denotes the connected component of the identity.  

We recall that for a given choice of $G,\theta$, and $K$, we set $K'$ to be the quasireductive subgroup of $G$ with $K'_0=K_0$ and $\operatorname{Lie}K'=\k':=\k_{\ol{0}}\oplus\p_{\ol{1}}$.  Such a subgroup exists and is unique by the theory of super Harish-Chandra pairs (see \cite{masuoka2022group}).

With the above setup, we will consider the homogeneous supervarieties $G/K$ and $G/K'$, which will be smooth, affine supervarieties (see, for instance, \cite{masuoka2018geometric}).

\subsection{Convention on actions}  Since $G$ has a left action on $G/K$ as a space, $\Bbbk[G/K]$ naturally carries a right action as a $G$-module.  The action of $\UU\g$ on $\Bbbk[G/K]$ is given by 
\[
u\mapsto (u\otimes 1)\circ a^*,
\]
where $a:G\times G/K\to G/K$ is the action map.  This defines an algebra homomorphism $\UU\g\to\DD(G/K)^{op}$, where $\DD(G/K)$ is the algebra of differential operators on $G/K$. 

\begin{remark}\label{remark left right action}
 We work with this right action so that the action on distributions will be a left action.  However, note that if $V=L(\lambda)$ is a left-module of highest weight $\lambda$ for $\UU\g$, then as a right module (obtained via precomposition with the antipode), we have $V=L(-\lambda)$, i.e.~ $V$ becomes a module of highest weight $-\lambda$.  
\end{remark}
\subsection{Ghost distributions}

For an affine supervariety $X$ and point $x\in X(\Bbbk)$, we set
\[
\Dist(X,x):=\{\psi:\Bbbk[X]\to \Bbbk:\psi(\m_x^n)=0\text{ for }n\gg0\}.
\]
By our conventions above, $\Dist(G/K,eK)$ has the natural structure of a left $\g$-module under precomposition by vector fields.  We recall that its structure is given by
\[
\UU\g/\UU\g\k\cong\Dist(G/K,eK)
\]
from the map
\[
u\mapsto\operatorname{res}_{eK}\circ\left(u\otimes 1\right)\circ a^*,
\]
where $a:G\times G/K\to G/K$ is the action morphism.  Since $K_0$ stabilizes $eK$, the action of $\k_{\ol{0}}$ integrates to an action of $K_0$ on $\Dist(G/K,eK)$; thus we obtain an action of $K'$ on $\Dist(G/K,eK)$.  The central observation of \cite{sherman2021ghost} is the existence of a natural isomorphism of $K'$-modules
\[
\Dist(G/K,eK)\cong\Ind_{\k_{\ol{0}}}^{\k'}\Dist(G_0/K_0,eK_0).
\]
This isomorphism is induced by the natural inclusion $\Dist(G_0/K_0,eK_0)\sub\Dist(G/K,eK)$.

For the pairs we consider, we have that $\Lambda^{\dim\p_{\ol{1}}}\p_{\ol{1}}$ is a trivial $\k_{\ol{0}}$-module, and thus we have an isomorphism of even vector spaces
\[
\Dist(G_0/K_0,eK_0)^{K_0}\to \Dist(G/K,eK)^{K'}
\]
given explicitly by
\[
z\mapsto v_{\k'}\cdot z=zv_{\k'}.
\]
Here $v_{\k'}$ is a nonzero element of $(\UU\k'/\UU\k'\k_{\ol{0}})^{\k'}$, which is a one-dimensional vector space (see \cite{sherman2021ghost}).

\begin{definition}
	We define $\AA_{G/K}:=\Dist(G/K,eK)^{K'}$ to be the ghost distributions on $G/K$.
\end{definition}

\subsection{The algebraic approach} Notice that one can work purely algebraically and consider $\UU\g/\UU\g\k$ as a $\k'$-module; in this case we set 
\[
\AA_{(\g,\k)}:=(\UU\g/\UU\g\k)^{\k'};
\]
if $K$ is connected then this agrees with $\AA_{G/K}$.  In general $\AA_{G/K}$ is a subspace of $\AA_{(\g,\k)}$ given by the invariants under $K_0/K_0^{\circ}$.  Here, our above isomorphism of $\Dist(G_0/K_0,eK_0)^{K_0}$ with $\Dist(G/K,eK)^{K'}$ becomes an isomorphism
\[
\left(\UU\g_{\ol{0}}/\UU\g_{\ol{0}}\k_{\ol{0}}\right)^{\k_{\ol{0}}}\xto{\sim}\left(\UU\g/\UU\g\k\right)^{\k'},
\]
given by
\[
u\mapsto v_{\k'}u.
\]

\begin{remark}
	For the pairs we consider, $\AA_{G/K}$ and $\ZZ_{G/K}$ are always purely even vector spaces because $\dim\p_{\ol{1}}$ is even (since it admits a nondegenerate symplectic form).
\end{remark}

\subsection{Cartan subspaces and the Iwasawa decomposition}  We let $\a\sub\p_{\ol{0}}$ denote a Cartan subspace, that is a maximal subspace of $\p_{\ol{0}}$ consisting only of semisimple elements.  We may extend $\a$ to a $\theta$-stable Cartan subalgebra of $\g$, which we call $\h$, and then we have $\h=\mathfrak{t}\oplus\a$ according to the $(\pm1)$-eigenspaces of $\theta$ on $\h$.  Notice that $\h=\h_{\ol{0}}$ because of the choice of Lie superalgebras that we work with.  

Write $\Delta\sub\h^*$ for the roots of $\g$ with respect to $\h$, and let $\ol{\Delta}\sub\a^*\setminus\{0\}$ denote the collection of nonzero restrictions of roots to $\a$. Choose a decomposition $\ol{\Delta}=\ol{\Delta}^+\sqcup\ol{\Delta}^-$ into positive and negative roots, and set
\[
\n=\bigoplus\limits_{\alpha\in\ol{\Delta}^+}\g_{\alpha}.
\]
We say that $\g$ admits an Iwasawa decomposition if for some choice of positive roots in $\ol{\Delta}$ as above, we have $\g=\k\oplus\a\oplus\n$.  In this case, we choose a decomposition $\Delta=\Delta^+\sqcup\Delta^-$ of positive and negative roots in such a way that the restriction map from $\h^*$ to $\a^*$ sends $\Delta^+$ to $\ol{\Delta}^+\sqcup\{0\}$.  If $\b$ denotes the corresponding Borel subalgebra, then we say that $\b$ is an Iwasawa Borel subalgebra; observe that $\a\oplus\n\sub\b$ and thus $\b+\k=\g$.  

By \cite{sherman2020iwasawa}, at least one of $(\g,\k)$ or $(\g,\k')$ admits an Iwasawa decomposition.

\subsection{Highest weight functions}  We now assume that $(\g,\k)$ admits an Iwasawa decomposition with Iwasawa Borel subalgebra $\b$.  Let $\Lambda\sub\a^*$ denote the set of weights of rational $\b$-eigenfunctions on $G/K$.  Then from \cite{sherman2021ghost} we have the following facts:
\begin{itemize}
	\item $\Lambda$ is a full rank lattice in $\a^*$;
	\item if we write $\Bbbk(G/K)^{(\b)}$ for the subalgebra of rational $\b$-functions on $G/K$, then the restriction map 
	\[
	\Bbbk(G/K)^{(\b)}\to \Bbbk(G_0/K_0)^{(\b_{\ol{0}})}
	\]
	is an isomorphism.  In particular for each $\lambda\in\Lambda$ there exists a one-dimensional subspace of rational $\b$-eigenfunctions of weight $\lambda$;
	\item all nonzero rational $B$-eigenfunctions $f$ are regular in a neighborhood of $eK$ and satisfy $f(eK)\neq0$.
\end{itemize}

\begin{definition}
	For $\lambda\in\Lambda$, we denote by $f_{\lambda}$ the unique rational $\b$-eigenfunction on $G/K$ with $f_{\lambda}(eK)=1$.  
\end{definition}

\subsection{Highest weight submodules $V(\lambda)$} We let $\Lambda^+\sub\Lambda$ denote the set of $\lambda$ in $\Lambda$ such that $f_{\lambda}\in \Bbbk[G/K]$. By Section 5 of \cite{sherman2021spherical}, $\Lambda^+$ is Zariski dense in $\a^*$.

\begin{lemma}\label{lemma_submodule_gen}
	For $\lambda\in\Lambda^+$, set $V(\lambda):=\UU\g f_{\lambda}$.  Then we have 
	\[
	V(\lambda)=\UU\k f_{\lambda}=\UU\k' f_{\lambda}.
	\]
\end{lemma}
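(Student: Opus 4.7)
The first equality $V(\lambda) = \UU\k f_\lambda$ is the standard PBW-plus-Iwasawa argument. The decomposition $\g = \k \oplus (\a + \n)$ exhibits both summands as Lie subalgebras, so PBW provides a vector space identification $\UU\g \cong \UU\k \otimes \UU(\a + \n)$ via multiplication. The subalgebra $\UU(\a + \n)$ preserves the line $\Bbbk f_\lambda$: since $\n \subset [\b, \b]$ and $f_\lambda$ is a $\b$-eigenvector, $\n$ annihilates $f_\lambda$, while $\a$ acts by the scalar character $\lambda$. Hence
\[
V(\lambda) = \UU\g \cdot f_\lambda = \UU\k \cdot \bigl(\UU(\a + \n) \cdot f_\lambda\bigr) = \UU\k \cdot f_\lambda.
\]

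For the second equality $V(\lambda) = \UU\k' f_\lambda$, the inclusion $\UU\k' f_\lambda \subset V(\lambda)$ is trivial, so the task is to prove $V(\lambda) \subset \UU\k' f_\lambda$. The crux is a $\theta$-trick: for any $x \in \bar{\n}$, write $x = k_x + p_x$ with $k_x = \tfrac{1}{2}(x + \theta(x)) \in \k$ and $p_x = \tfrac{1}{2}(x - \theta(x)) \in \p$. Since $\theta(\bar{\n}) \subset \n$ annihilates $f_\lambda$, one finds
\[
p_x \cdot f_\lambda = \tfrac{1}{2}\, x \cdot f_\lambda, \qquad k_x \cdot f_\lambda = (x - p_x)\cdot f_\lambda = p_x \cdot f_\lambda.
\]
For odd $x$ this identifies a $\k_{\bar{1}}$-action with a $\p_{\bar{1}} \subset \k'$-action on $f_\lambda$; for even $x$ it produces a $\p_{\bar{0}}$-action which is rewritten via the Iwasawa decomposition $\g_{\bar{0}} = \k_{\bar{0}} \oplus \a \oplus \n_{\bar{0}}$ as a $\k_{\bar{0}} \subset \k'$-action plus a scalar.

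Combining the $\theta$-trick with the alternate PBW $\UU\g = \UU\bar{\n} \cdot \UU\b$ and $\UU\b \cdot f_\lambda = \Bbbk f_\lambda$, one has $V(\lambda) = \UU\bar{\n} f_\lambda$, and the plan is to induct on the PBW degree of $\UU\bar{\n}$. Each outermost generator $x \in \bar{\n}$ is replaced by $2 p_x$ modulo $-\theta(x) \in \n$, which is commuted through the remaining monomial until it meets $f_\lambda$ and vanishes, with the commutator corrections being of strictly lower degree and falling under the induction hypothesis. The main obstacle is the odd centralizer $\m_{\bar{1}} = \g_{\bar{1}} \cap \ker \ad(\a)$: it sits entirely in $\k_{\bar{1}}$ and so is not reached by the $\theta$-trick. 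This is resolved by noting that positive $\m$-root vectors lie in $\b$ (and thus kill $f_\lambda$), while negative ones arise as supercommutators in $[\p_{\bar{0}}, \p_{\bar{1}}] \subset \k_{\bar{1}}$ and reduce to $\UU\k'$-actions already under control.
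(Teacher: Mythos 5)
Your argument for the first equality $V(\lambda) = \UU\k f_\lambda$ is correct and matches the paper's one-line invocation of the Iwasawa decomposition.

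For the second equality your approach is different in spirit from the paper's, but it has a genuine gap in exactly the place where the paper needs a non-trivial external input. The paper's route is to establish the linear decomposition $\g = \n + \c(\a) + \k'$ and then prove that $f_\lambda$ is a $\c(\a)$-eigenvector; the hard step is showing $e_\alpha f_\lambda = 0$ when $\alpha$ is a negative simple root fixed by $\theta$, and in the odd \emph{isotropic} case this is not a formal consequence of anything at hand --- the paper has to appeal to the spherical supervariety result of Section 5 of \cite{sherman2021spherical}, which says an odd $\b$-highest weight function on $G/K$ must vanish. Your proposal contains no substitute for this input, and the place where you would need it is precisely the ``odd centralizer'' you identify at the end.

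Concretely, there are two problems. First, the PBW factorization $\UU\g = \UU\bar{\n}\cdot\UU\b$ is false with $\bar{\n}$ denoting the negative \emph{restricted} root spaces, which is what the paper's $\n$ means: the negative $\h$-root spaces with $\alpha|_\a = 0$ lie in $\c(\a)$ and are in neither $\bar{\n}$ nor $\b$, so $\g \ne \bar{\n} \oplus \b$. If you instead take $\bar{\n}$ to be the full nilradical of $\bar{\b}$, the PBW holds, but then the $\theta$-trick no longer applies to all of $\bar{\n}$: for a negative root $\alpha$ with $\alpha|_\a = 0$ and $\theta\alpha = \alpha$, the vector $\theta(e_\alpha)$ lies again in $\g_\alpha$, not in $\n$, so there is nothing to discard. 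If moreover $\theta(e_\alpha) = e_\alpha$ (so $e_\alpha \in \k_{\ol{1}}\cap\c(\a)$), then $p_x = 0$ and the trick yields nothing at all. Second, the resolution you offer for these vectors --- that negative $\m_{\ol{1}}$-root vectors ``arise as supercommutators in $[\p_{\ol{0}},\p_{\ol{1}}]$ and reduce to $\UU\k'$-actions already under control'' --- is not an argument: $\p_{\ol{0}} \not\subset \k'$, so rewriting $e_\alpha = [p_0,p_1] = p_0 p_1 + p_1 p_0$ reintroduces exactly the kind of term you are trying to eliminate, and no induction on degree is set up to absorb it. What you actually need is that these $e_\alpha$ kill $f_\lambda$, i.e.\ that $f_\lambda$ is a $\c(\a)$-eigenvector. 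For $\alpha$ even or odd nonisotropic this follows from $(\lambda,\alpha)=0$ and $\s\l(2)$/$\o\s\p(1|2)$ theory, but for $\alpha$ odd isotropic it requires the cited result on odd $\b$-highest weight functions. Without that, the proof does not close.
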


\begin{proof}
The first equality follows from the Iwasawa decomposition.  For the second, we note that we have $\g=\n+\c(\a)+\k'$; thus it suffices to show that $f_{\lambda}$ is a $\c(\a)$-eigenvector.  By \cite{sherman2020iwasawa}, $\c(\a)$ is generated by simple roots of $\b$ which are fixed by $\theta$, and thus it suffices to show that $\alpha$ is a negative simple root of $\b$ with $\theta\alpha=\alpha$, then $e_{\alpha}f_{\lambda}=0$, where $e_{\alpha}\in\g_{\alpha}$.  

If $\alpha$ is even or odd nonisotropic, then this follows because $(\alpha,\lambda)=0$ and the representation theory of $\s\l(2)$ and $\o\s\p(1|2)$.  If $\alpha$ is odd isotropic, then because $(\alpha,\lambda)=0$, $e_{\alpha}f_{\lambda}$ will be an odd $\b$-highest weight vector of $\Bbbk[G/K]$; however by Section 5 of \cite{sherman2021spherical}, this implies $e_{\alpha}f_{\lambda}=0$, and we are done.
\end{proof}

For the rest of the paper we will write $V(\lambda)$ in place of $\UU\g f_{\lambda}$ when $\lambda\in\Lambda^+$.

\subsection{Harish-Chandra morphism}

Under our assumption of an Iwasawa decomposition, we obtain a decomposition of $\UU\g$ as $\UU\g=S(\a)\oplus(\n\UU\g+\UU\g\k)$.   Thus we have
\[
\Dist(G/K,eK)\cong S(\a)\oplus (\n\UU\g+\UU\g\k)/\UU\g\k.
\] 
We define the Harish-Chandra morphism to be the projection of $\Dist(G/K,eK)$ onto $S(\a)$ along $(\n\UU\g+\UU\g\k)/\UU\g\k$, postcomposed with the antipode $\sigma_{\a}$ of $\a$.  The purpose of this postcomposition with the antipode is to make our formulas look familiar to those who work with left $\UU\g$ modules as opposed to right $\UU\g$-modules (see \cref{remark left right action}). The following lemma is straightforward.

\begin{lemma}
	For $\psi\in\Dist(G/K,eK)$ we have
	\[
	\psi(f_{\lambda})=HC(\psi)(-\lambda).
	\]
\end{lemma}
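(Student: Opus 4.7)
The plan is to compute $\psi(f_\lambda)$ directly by lifting $\psi$ to a representative $u \in \UU\g$ and applying the Iwasawa-type decomposition $\UU\g = S(\a) \oplus (\n\UU\g + \UU\g\k)$ recalled just above the statement. Writing $u = s + v + w$ with $s \in S(\a)$, $v \in \n\UU\g$, and $w \in \UU\g\k$, the definition of $HC$ gives $HC(\psi) = \sigma_\a(s)$, so $HC(\psi)(-\lambda) = \sigma_\a(s)(-\lambda) = s(\lambda)$ under the canonical identification $S(\a) = \Bbbk[\a^*]$. Hence it suffices to establish $\psi(f_\lambda) = s(\lambda)$, which I would do by handling each summand of $u$ in turn.

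The term $w \in \UU\g\k$ represents the zero class in $\Dist(G/K,eK) = \UU\g/\UU\g\k$, so contributes nothing. For $v \in \n\UU\g$, write $v = \sum_i e_{\alpha_i} u'_i$ with $e_{\alpha_i} \in \n$. Under the right-action convention spelled out in the paper, $(e_{\alpha_i} u'_i)\cdot f_\lambda = u'_i \cdot (e_{\alpha_i}\cdot f_\lambda)$; since $f_\lambda$ is a $\b$-eigenfunction and $\n$ lies in the nilpotent ideal of the Iwasawa Borel $\b$, each $e_{\alpha_i}$ annihilates $f_\lambda$, so these terms also vanish. Finally, for $s \in S(\a)$, the eigenvalue identity $h\cdot f_\lambda = \lambda(h) f_\lambda$ for $h \in \a$ follows directly from the definition of $f_\lambda$ as a $B$-eigenfunction of weight $\lambda$; iterating this over products of (mutually commuting) elements of $\a$ yields $s \cdot f_\lambda = s(\lambda)\, f_\lambda$. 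Evaluating at $eK$ and using $f_\lambda(eK)=1$ gives $\psi(f_\lambda) = s(\lambda) = HC(\psi)(-\lambda)$.

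There is no serious technical obstacle — the proof is a short verification once the decomposition of $\UU\g$ is in place. The only care needed is bookkeeping with conventions: keeping track of the antipode $\sigma_\a$ built into the definition of $HC$ (which is precisely what produces the $-\lambda$ on the right-hand side of the identity), and consistently applying the right-action rule $(ab)\cdot f = b\cdot(a\cdot f)$ so that in the product $e_{\alpha_i} u'_i$ the root vector $e_{\alpha_i}$ is the one that acts first on $f_\lambda$.
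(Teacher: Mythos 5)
Your proof is correct and is precisely the verification the paper leaves implicit when it calls the lemma ``straightforward'': decompose $u=s+v+w$ via $\UU\g=S(\a)\oplus(\n\UU\g+\UU\g\k)$, note that $w$ dies in the quotient, that the right-action rule $(ab)\cdot f=b\cdot(a\cdot f)$ makes the leading $\n$-factor of $v$ act first and annihilate $f_\lambda$, and that $s\cdot f_\lambda=s(\lambda)f_\lambda$ combined with the antipode in the definition of $HC$ accounts for the $-\lambda$. No gap; the bookkeeping of conventions (right action, antipode $\sigma_\a$) is exactly the substance of the lemma and you handle it correctly.
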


\begin{cor}\label{cor_nec_condition_extns}
	Suppose that $\mu,\lambda\in\Lambda^+$ and $f_{\mu}\in V(\lambda)$.  Then there exists $c\in \Bbbk$ such that $HC(\gamma)(-\mu)=cHC(\gamma)(-\lambda)$ for all $\gamma\in\AA_{G/K}$.
\end{cor}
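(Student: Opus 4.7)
The plan is to apply \cref{lemma_submodule_gen} to write $f_\mu$ as the image of $f_\lambda$ under a single element of $\UU\k'$, and then use that the $K'$-invariance of $\gamma$ forces the action of $\UU\k'$ to collapse to its augmentation.

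First I would invoke \cref{lemma_submodule_gen}: by hypothesis $f_\mu \in V(\lambda)$, and the lemma identifies $V(\lambda)$ with the cyclic $\UU\k'$-submodule generated by $f_\lambda$, so there exists $u \in \UU\k'$ with $f_\mu = u \cdot f_\lambda$ (where $\cdot$ denotes the right action of $\UU\g$ on $\Bbbk[G/K]$ recalled in the conventions section).

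Next I would show that, for every $\gamma \in \AA_{G/K}$, every $u' \in \UU\k'$, and every $f \in \Bbbk[G/K]$, one has $\gamma(u' \cdot f) = \epsilon(u')\,\gamma(f)$, where $\epsilon$ is the augmentation of $\UU\k'$. The infinitesimal condition of $K'$-invariance is $\gamma(X \cdot f) = 0$ for $X \in \k'$; a standard induction on the PBW filtration of $\UU\k'$ then extends this identity, since monomials in the augmentation ideal annihilate $\gamma(f)$, and writing $u' = \epsilon(u')\cdot 1 + (u' - \epsilon(u')\cdot 1)$ yields the statement in general. In the super setting this works verbatim, as $\UU\k'$ is a super Hopf algebra with the usual augmentation.

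Combining the two steps, $\gamma(f_\mu) = \gamma(u \cdot f_\lambda) = \epsilon(u)\,\gamma(f_\lambda)$, so $c := \epsilon(u)$ is a scalar depending only on $u$ (and hence on $\lambda, \mu$), but not on $\gamma$. Using the preceding lemma $\gamma(f_\nu) = HC(\gamma)(-\nu)$ converts this to the desired identity $HC(\gamma)(-\mu) = c\,HC(\gamma)(-\lambda)$. There is no real obstacle; the only care needed is with the left/right action conventions (the $\UU\g$-action on $\Bbbk[G/K]$ is a right action, while the induced action on $\Dist(G/K,eK)$ is a left action) so that the augmentation identity is applied on the correct side. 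One might also note that $u$ is not unique, but this causes no issue: the relation $\gamma(f_\mu) = \epsilon(u)\,\gamma(f_\lambda)$ holds for any valid choice of $u$, so the same $c$ works uniformly over $\gamma \in \AA_{G/K}$.
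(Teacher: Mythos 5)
Your argument is correct and is essentially the paper's proof: invoke \cref{lemma_submodule_gen} to write $f_\mu = uf_\lambda$ with $u\in\UU\k'$, use $K'$-invariance of $\gamma$ to get $\gamma(uf_\lambda)=\varepsilon(u)\gamma(f_\lambda)$, and translate via $\gamma(f_\nu)=HC(\gamma)(-\nu)$. You simply spell out the routine PBW induction behind the step $\gamma(u\cdot f)=\varepsilon(u)\gamma(f)$, which the paper leaves implicit.
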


\begin{proof}
	By Lemma \ref{lemma_submodule_gen}, we may write $f_{\mu}=uf_{\lambda}$ for some $u\in\UU\k'$.  Thus for $\gamma\in\AA_{G/K}$ we have
	\[
	HC(\gamma)(-\mu)=\gamma(f_{\mu})=\gamma(uf_{\lambda})=\varepsilon(u)\gamma(f_{\lambda})=\varepsilon(u)HC(\gamma)(-\lambda).
	\]
	Here $\varepsilon:\UU\k'\to \Bbbk$ is the counit.
\end{proof}

\begin{remark}
	Corollary \ref{cor_nec_condition_extns} implies that in order for $f_{\mu}\in V(\lambda)$, the evaluations $\operatorname{ev}_{-\lambda},\operatorname{ev}_{-\mu}$ must be linearly dependent on $HC(\AA_{G/K})\sub S(\a)$.  For $\lambda\in\h^*$, we write $\operatorname{ev}_{\lambda}:S\h\to\C$ for the linear map $\operatorname{ev}_{\lambda}(f)=f(\lambda)$.
\end{remark}

\subsection{Branching to $K'$}

The following lemmas are consequences of \cite{sherman2021ghost}.

\begin{lemma}\label{lemma branching}
	For $\lambda\in\Lambda^+$, $\operatorname{Res}_{K'} V(\lambda)$ contains at most one copy of $I_{K'}(\Bbbk)$; further it contains a copy if and only if $HC(\gamma)(\lambda)\neq0$ for some $\gamma\in\AA_{G/K}$.
\end{lemma}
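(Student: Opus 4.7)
My plan is to identify $V(\lambda)$ as a $K'$-induced module from the $K_0$-module $V_0(\lambda):=\UU\g_{\ol{0}}f_\lambda$, reducing the count of $I_{K'}(\Bbbk)$-summands to a sphericity question for $G_0/K_0$, and then to apply the ghost-distribution isomorphism $\AA_{G/K}\cong\AA_{G_0/K_0}$ to translate the existence of $K_0$-invariants in $V_0(\lambda)$ into nonvanishing of $HC(\gamma)$.

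By the restriction isomorphism $\Bbbk(G/K)^{(\b)}\to\Bbbk(G_0/K_0)^{(\b_{\ol{0}})}$ recalled earlier, $V_0(\lambda)$ is the irreducible $G_0$-module of highest weight $\lambda$. The natural $K'$-equivariant surjection
\[
\UU\k'\otimes_{\UU\k_{\ol{0}}}V_0(\lambda)\twoheadrightarrow V(\lambda)
\]
coming from \cref{lemma_submodule_gen} should be an isomorphism by a dimension count using $\UU\k'\cong\UU\k_{\ol{0}}\otimes\Lambda\p_{\ol{1}}$, exactly parallel to the structural isomorphism $\UU\g/\UU\g\k\cong\Ind_{\k_{\ol{0}}}^{\k'}(\UU\g_{\ol{0}}/\UU\g_{\ol{0}}\k_{\ol{0}})$ of \cite{sherman2021ghost}. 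Decomposing $V_0(\lambda)=\bigoplus_iW_i$ into $K_0$-irreducibles (by reductivity of $K_0$) yields $V(\lambda)\cong\bigoplus_i\Ind_{\k_{\ol{0}}}^{\k'}W_i$, with each summand the injective envelope $I_{K'}(W_i)$; triviality of $\Lambda^{\dim\p_{\ol{1}}}\p_{\ol{1}}$ as a $K_0$-module forces $\Ind_{\k_{\ol{0}}}^{\k'}W_i\cong I_{K'}(\Bbbk)$ precisely when $W_i\cong\Bbbk$. Hence the multiplicity of $I_{K'}(\Bbbk)$ in $V(\lambda)$ equals $\dim V_0(\lambda)^{K_0}$, which is at most $1$ by sphericity of $G_0/K_0$ (the Iwasawa decomposition restricts to $\g_{\ol{0}}=\k_{\ol{0}}\oplus\a\oplus\n_{\ol{0}}$); this proves the first assertion.

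For the second assertion: if $\gamma\in\AA_{G/K}$ satisfies $\gamma(f_\lambda)\neq 0$, then $\gamma|_{V(\lambda)}$ is a nonzero $K'$-invariant functional, so $V(\lambda)^{*,K'}$ is nonzero and by Frobenius reciprocity on the $\Ind$-decomposition this forces $V_0(\lambda)^{K_0}\neq 0$, producing the $I_{K'}(\Bbbk)$-summand. Conversely, if $V_0(\lambda)^{K_0}\neq 0$, classical Peter--Weyl for the spherical pair $(G_0,K_0)$ provides $u\in\AA_{G_0/K_0}$ detecting the $K_0$-spherical vector of $V_0(\lambda)$; transporting via $u\mapsto v_{\k'}u$ then yields $\gamma\in\AA_{G/K}$ with $\gamma(f_\lambda)\neq0$, i.e.\ $HC(\gamma)$ nonvanishing at the relevant sign of $\lambda$ by the formula $\gamma(f_\lambda)=HC(\gamma)(-\lambda)$.

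The central obstacle is the final step of the converse: since the transport $u\mapsto v_{\k'}u$ occurs in $\UU\g/\UU\g\k$, the evaluations $u(f_\lambda)$ and $(v_{\k'}u)(f_\lambda)$ are a priori unrelated (for instance $u=1$ yields $u(f_\lambda)=1$ for every $\lambda$, while $v_{\k'}(f_\lambda)$ typically vanishes). One must therefore choose $u$ reflecting the genuine $K_0$-spherical datum of $V_0(\lambda)$ and argue that $v_{\k'}$ applied to the corresponding spherical function has nonzero top-odd Taylor coefficient at $eK$; handling this will use the explicit structure of $v_{\k'}$ as a product of $\p_{\ol{1}}$-root vectors together with the injectivity of $HC$ (\cref{intro thm inj}) to certify the required nonvanishing.
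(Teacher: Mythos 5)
The central structural claim in your argument is false, and this is not a repairable technicality: you assert that the natural surjection
\[
\UU\k'\otimes_{\UU\k_{\ol{0}}}V_0(\lambda)\twoheadrightarrow V(\lambda)
\]
is an isomorphism ``by a dimension count,'' but if this held for all $\lambda\in\Lambda^+$ then the multiplicity of $I_{K'}(\Bbbk)$ in $\operatorname{Res}_{K'}V(\lambda)$ would \emph{always} equal $\dim V_0(\lambda)^{K_0}=1$ (sphericity of $(G_0,K_0)$), and the condition ``$HC(\gamma)(\lambda)\neq 0$ for some $\gamma$'' would be vacuous. That contradicts the lemma it is supposed to prove, and it also contradicts the subsequent discussion in the paper: $(\g,\k)$-typical weights are defined precisely as those $\lambda$ for which $V(\lambda)$ does contain $I_{K'}(\Bbbk)$, and \cref{cor generic typicality} shows typicality is a generic, not universal, condition. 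The induction isomorphism $\Dist(G/K,eK)\cong\Ind_{\k_{\ol{0}}}^{\k'}\Dist(G_0/K_0,eK_0)$ from \cite{sherman2021ghost} applies to the full distribution space; it does \emph{not} pass to arbitrary highest-weight submodules of $\Bbbk[G/K]$. The surjection above can have a nontrivial kernel, and detecting when it does is exactly the content of the lemma, not an input to it. So the first assertion of the lemma (multiplicity $\leq 1$) is established by your argument only with the incorrect stronger conclusion that the multiplicity is always $1$.

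You also candidly flag that the converse direction of the second assertion is unresolved: producing $\gamma\in\AA_{G/K}$ with $\gamma(f_\lambda)\neq 0$ from the existence of a $K_0$-spherical vector in $V_0(\lambda)$ requires controlling how $v_{\k'}$ acts on the spherical datum, and ``choose $u$ reflecting the genuine spherical datum'' is not an argument. Since $\gamma(f_\lambda)=\gamma(uf_\lambda)$ collapses to $\varepsilon(u)\gamma(f_\lambda)$ for $u\in\UU\k'$ (as in \cref{cor_nec_condition_extns}), the evaluation $\gamma(f_\lambda)$ is an intrinsic number attached to $\lambda$, and whether it vanishes is governed by whether $V(\lambda)$, as a quotient of $\Ind_{\k_{\ol{0}}}^{\k'}V_0(\lambda)$, retains the socle copy of $\Bbbk$ sitting inside $I_{K'}(\Bbbk)=\Ind_{\k_{\ol{0}}}^{\k'}\Bbbk$. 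That is the actual mechanism (developed in \cite{sherman2021ghost}, which the paper simply cites here): $\Bbbk[G/K]$ is an injective $K'$-module built by Koszul induction from $\Bbbk[G_0/K_0]$, the image of the spherical vector under $v_{\k'}$ is the putative socle generator, and $HC(\gamma)$ exactly measures whether that image survives in $V(\lambda)$ or dies in the kernel of the map to $V(\lambda)$. Your outline gets the flavor of the decomposition right (identifying $I_{K'}(\Bbbk)$-multiplicity with spherical data on the even part) but places the induction isomorphism at the wrong level, which hides the entire nontrivial content of the statement.

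Finally, a small point worth tracking: the evaluation formula gives $\gamma(f_\lambda)=HC(\gamma)(-\lambda)$, whereas the lemma's hypothesis is $HC(\gamma)(\lambda)\neq 0$. Whatever convention is in force, the argument must consistently keep track of the antipode $\sigma_{\a}$ built into $HC$, otherwise the ``iff'' is stated about the wrong point.
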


\begin{lemma}\label{lemma_proj_nonzero}
	For $\lambda\in\Lambda^+$, if $V(\lambda)$ is irreducible and contains $I_{K'}(\Bbbk)$, then $I_{G}(L(\lambda))\sub \Bbbk[G/K']$.
\end{lemma}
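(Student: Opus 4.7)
My plan is to reduce, via Frobenius reciprocity for $\Bbbk[G/K']=\Coind_{K'}^{G}\Bbbk$, to the production of a nonzero $K'$-equivariant linear functional $\psi\colon V(\lambda)\to\Bbbk$. Once such a $\psi$ is in hand, the adjoint map $V(\lambda)\to\Bbbk[G/K']$ is a nonzero $G$-equivariant map, and since $V(\lambda)$ is simple by hypothesis it must be an embedding, so $L(\lambda)\hookrightarrow\Bbbk[G/K']$. Because $G/K'$ is an affine supervariety, $\Bbbk[G/K']$ is injective in the category of rational $G$-modules, so this embedding extends along $L(\lambda)\hookrightarrow I_G(L(\lambda))$ to a map $I_G(L(\lambda))\to\Bbbk[G/K']$ which remains injective by the essentiality of the socle of $I_G(L(\lambda))$.

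To produce $\psi$ I would exploit that $I_{K'}(\Bbbk)$ is itself injective in rational $K'$-modules. The given embedding $I_{K'}(\Bbbk)\hookrightarrow V(\lambda)$ therefore splits as $K'$-modules, providing a $K'$-equivariant surjection $V(\lambda)\twoheadrightarrow I_{K'}(\Bbbk)$; composing with any $K'$-equivariant surjection $I_{K'}(\Bbbk)\twoheadrightarrow\Bbbk$ then delivers $\psi$. The only substantive content is therefore to exhibit a trivial $K'$-quotient of $I_{K'}(\Bbbk)$, equivalently to show that its head is $\Bbbk$.

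For this I would invoke the structural observation, recalled in the present paper, that $\Lambda^{\dim\p_{\ol{1}}}\p_{\ol{1}}$ is a trivial $\k_{\ol{0}}$-module: the triviality of this Berezinian identifies $\Coind_{\k_{\ol{0}}}^{\k'}\Bbbk$ with $\Ind_{\k_{\ol{0}}}^{\k'}\Bbbk\cong\Lambda^{\bullet}\p_{\ol{1}}$ (the $K_0$-module $\Lambda^{\bullet}\p_{\ol{1}}$ with $\p_{\ol{1}}$ acting by wedging), and then the canonical projection $\Lambda^{\bullet}\p_{\ol{1}}\twoheadrightarrow\Lambda^{0}\p_{\ol{1}}=\Bbbk$ is manifestly $K'$-equivariant, witnessing the required trivial quotient. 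I should also record that $V(\lambda)$ is finite-dimensional so the identifications above go through cleanly; this follows because $\Bbbk[G/K]$ is a rational, hence locally finite, $G$-module, so $V(\lambda)=\UU\g f_\lambda$ is contained in the finite-dimensional $G$-submodule generated by $f_\lambda$.

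The hardest part, I expect, is the verification that $\Bbbk$ is the head of $I_{K'}(\Bbbk)$ in this superalgebraic setting; once that is established the remaining steps are standard categorical manipulations (splitting off an injective summand, applying Frobenius reciprocity, and extending along an injective envelope). A minor secondary point is reconciling the right-action highest-weight convention for $V(\lambda)$ with the label $L(\lambda)$ appearing in the statement, but this follows directly from the convention spelled out in \cref{remark left right action}.
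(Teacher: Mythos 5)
Your high-level strategy — Frobenius reciprocity to reduce to a nonzero $K'$-invariant functional on $V(\lambda)$, then extend along the injective hull using essentiality of the socle — is sound, and you correctly identify the triviality of $\Lambda^{\dim\p_{\ol{1}}}\p_{\ol{1}}$ as the relevant structural input. But the step producing the functional has a gap. What your argument actually establishes is that $\Coind_{\k_{\ol{0}}}^{\k'}\Bbbk\cong\Ind_{\k_{\ol{0}}}^{\k'}\Bbbk$ has a trivial quotient (and even that needs adjusting: the $\p_{\ol{1}}$-action on $\Ind_{\k_{\ol{0}}}^{\k'}\Bbbk\cong\Lambda^{\bullet}\p_{\ol{1}}$ is \emph{not} pure wedging, since the brackets $[\p_{\ol{1}},\p_{\ol{1}}]\subseteq\k_{\ol{0}}$ contribute lower-order terms; the correct observation is that the counit $\UU\k'/\UU\k'\k_{\ol{0}}\to\Bbbk$ is $\k'$-equivariant because it is an algebra map and $\Bbbk$ carries the trivial action). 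However, $I_{K'}(\Bbbk)$ is a priori only a direct summand of $\Coind_{\k_{\ol{0}}}^{\k'}\Bbbk$, and you have not shown that the trivial quotient factors through that summand. The missing claim is that $\Bbbk$ lies in the head of $I_{K'}(\Bbbk)$, equivalently $I_{K'}(\Bbbk)\cong P_{K'}(\Bbbk)$; this is precisely the hard point you name and then elide. It does hold — with $\Lambda^{\dim\p_{\ol{1}}}\p_{\ol{1}}$ trivial the Nakayama twist on finite-dimensional $K'$-modules is trivial, so injective hulls and projective covers of a given simple coincide — but one must either invoke this, or establish indecomposability of $\Coind_{\k_{\ol{0}}}^{\k'}\Bbbk$, before the chain $V(\lambda)\twoheadrightarrow I_{K'}(\Bbbk)\twoheadrightarrow\Bbbk$ is available.

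There is also a shorter route that avoids the structure of $I_{K'}(\Bbbk)$ entirely and uses the paper's own machinery: by \cref{lemma branching}, the hypothesis gives some $\gamma\in\AA_{G/K}$ with $\gamma(f_{\lambda})\neq0$, and since $\gamma$ is a $K'$-invariant distribution supported at $eK$, its restriction to the $G$-submodule $V(\lambda)\subseteq\Bbbk[G/K]$ is already the required nonzero $K'$-invariant functional. The remainder of your argument (Frobenius reciprocity, then extension along $L(\lambda)\hookrightarrow I_G(L(\lambda))$ using injectivity of $\Bbbk[G/K']$ and essentiality of the socle) then applies verbatim. You should also cite, rather than assert, the injectivity of $\Bbbk[G/K']$ as a $G$-module (it follows from $G/K'$ being affine); the paper itself gives no proof of the lemma and refers to \cite{sherman2021ghost}, where the argument is very likely the one just sketched.
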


\section{Injectivity of the Harish-Chandra Homomorphism}

This section is devoted to the proof and consequences of the following result:
\begin{thm}\label{thm_injectivity}
Suppose that $(\g,\k)$ admits an Iwasawa decomposition.  Then the map
	\[
	HC:\AA_{(\g,\k)}\to S(\a)
	\]
	is injective.
\end{thm}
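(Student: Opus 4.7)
My plan is to reduce injectivity of $HC$ on $\AA_{(\g,\k)}$ to the classical injectivity of the Harish-Chandra map on the reductive symmetric pair $(\g_{\ol{0}},\k_{\ol{0}})$, via the explicit isomorphism $(\UU\g_{\ol{0}}/\UU\g_{\ol{0}}\k_{\ol{0}})^{\k_{\ol{0}}}\xrightarrow{\sim}\AA_{(\g,\k)}$, $u\mapsto v_{\k'}u$, recalled in Section 2. The task becomes showing $HC(v_{\k'}u)\ne 0$ whenever $u\ne 0$. A preliminary reduction: because $u$ is $\k_{\ol{0}}$-invariant, $\k_{\ol{0}}\cdot u\subseteq \UU\g\k$, so also $\UU\k'\k_{\ol{0}}\cdot u\subseteq\UU\g\k$. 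Consequently, for any ordered basis $x_1,\ldots,x_r$ of $\p_{\ol{1}}$, only the top PBW symbol of $v_{\k'}$ contributes, and
\[
v_{\k'}u\equiv x_1 x_2\cdots x_r\, u \pmod{\UU\g\k}.
\]

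I would then choose the basis adapted to the Iwasawa decomposition. Since $\dim\p_{\ol{1}}=\dim\g_{\ol{1}}-\dim\k_{\ol{1}}=\dim\n_{\ol{1}}$, the projection $\p_{\ol{1}}\to\n_{\ol{1}}$ along $\k_{\ol{1}}$ is a linear isomorphism; I may therefore write $x_i=n_i+k_i$ with $n_i\in\g_{\alpha_i}\cap\n_{\ol{1}}$ supported in a single odd restricted root space, and $k_i=-\tfrac{1}{2}(n_i+\theta n_i)\in\k_{\ol{1}}$. Expanding $\prod_i(n_i+k_i)u$ and reducing modulo $\n\UU\g+\UU\g\k$, any term with an $n$ at the leftmost position is killed by $\n\UU\g$, and any term leaving a $k$ immediately adjacent to $u$ is killed by the $\k_{\ol{0}}$-invariance of $u$. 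The surviving contributions arise from repeatedly applying $k_i n_j=n_j k_i+[k_i,n_j]$ and extracting the $\a$-component of each commutator; a short calculation using $\theta$-invariance of $(-,-)$ yields
\[
\pi_{\a}([k_i,n_j])=\tfrac{1}{2}(\theta n_i,n_j)\, h_{\alpha_i},
\]
which vanishes unless $\alpha_i=\alpha_j$ and which is nondegenerate in each odd restricted root space. Bookkeeping the combinatorics of the expansion, I expect an identity of the shape
\[
HC(v_{\k'}u)=\Phi\cdot HC_0(u)+(\text{lower-order terms}),
\]
where $HC_0$ denotes the classical Harish-Chandra map of $(\g_{\ol{0}},\k_{\ol{0}})$ and $\Phi\in S(\a)$ is a Pfaffian-type polynomial in the scalars $(\theta n_i,n_j)$ and the elements $h_{\alpha_i}$, nonzero by nondegeneracy of $(-,-)|_{\p_{\ol{1}}}$.

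Granted this leading-term description, injectivity follows at once: $HC_0$ is injective by the classical Chevalley restriction theorem for reductive symmetric pairs, and $S(\a)$ is an integral domain, so $\Phi\cdot HC_0(u)\ne 0$ whenever $u\ne 0$. The same calculation should also yield the promised formula for the degree and highest-order term of $HC(\gamma)$ announced in the introduction. The expected main obstacle is the leading-term computation itself, specifically the identification of $\Phi$ and the verification that $\Phi\ne 0$; this is the step where $\theta$-invariance of the bilinear form is used essentially, and the remark following the theorem confirms that without this hypothesis $\Phi$ can degenerate and injectivity can genuinely fail.
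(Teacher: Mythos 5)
Your proposal follows the same overall strategy as the paper: pass to a representative $v_{\k'}u$ with $u\in(\UU\g_{\ol{0}}/\UU\g_{\ol{0}}\k_{\ol{0}})^{\k_{\ol{0}}}$, compute the leading term of $HC(v_{\k'}u)$ with respect to a suitable filtration, exhibit it as (a nonzero element of $S(\a)$)$\cdot HC(u)$, and invoke classical injectivity for $(\g_{\ol{0}},\k_{\ol{0}})$ together with $S(\a)$ being a domain. Where you diverge is in the choice of basis for $\p_{\ol{1}}$: you take the one determined by the isomorphism $\p_{\ol{1}}\xrightarrow{\sim}\n_{\ol{1}}$, so every basis vector splits as $n_i+k_i$ with $n_i\in\n_{\ol{1}}$ and $k_i=-\tfrac12(n_i+\theta n_i)\in\k_{\ol{1}}$, whereas the paper takes a basis adapted to the fixed-point-free action of $-\theta$ on the odd positive $\h$-roots, grouping the generators into pairs $x_i=e_{\alpha_i}-\theta e_{\alpha_i}$, $y_i=e_{-\alpha_i}-\theta e_{-\alpha_i}$. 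The upshot of your choice is that the leading coefficient naturally organizes as a Pfaffian of the antisymmetric matrix $M_{ij}=\pi_\a([k_i,n_j])$, while the paper's pairing makes the leading coefficient manifestly the product $(-1)^kh_{\ol{\alpha_k}}\cdots h_{\ol{\alpha_1}}$ after a clean two-step induction. These agree, because $M$ is block diagonal with blocks indexed by positive odd restricted roots (as $(\theta n_i,n_j)=0$ unless $\alpha_i=\alpha_j$); each block has nondegenerate skew form $(\theta n_i,n_j)=-2(x_i,x_j)$, hence is even-dimensional with nonzero Pfaffian $\sim h_{\alpha}^{\dim(\g_\alpha)_{\ol{1}}/2}$, and the product over blocks reproduces the paper's formula. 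Your proposal thus buys a conceptually cleaner picture of why $\theta$-invariance of the form is the essential hypothesis, but as you acknowledge, the leading-term bookkeeping is only sketched; you should record (a) the block-diagonal structure of $M$ and the even-dimensionality of blocks, which is what actually forces $\operatorname{Pf}(M)\neq0$ (nondegeneracy of $(-,-)|_{\p_{\ol{1}}}$ alone is not enough without it), and (b) a precise filtration argument (the paper imports one from its prequel) justifying why all the discarded commutator terms are genuinely lower order.
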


\begin{proof}
We consider the filtration on $\Dist(G/K,eK)$ defined in Sec.~ 8.6 of \cite{sherman2021ghost}, except that we `halve' the indexing; i.e.~ our filtration on $\Dist(G/K,eK)$ will be indexed by half-integers, and if an element previously lied in the $r$th part of the filtration, then it now lies in the $r/2$ part of the filtration by definition.  Then Lem. 8.18 of \cite{sherman2021ghost} tells us that if $f\in\Dist(G/K,eK)$ lies in the $r$th part of the filtration, we must have $\deg HC(f)\leq r$.

Now we may represent any element of $\AA_{(\g,\k)}$ as $v_{\k'}z\in\UU\g/\UU\g\k$, where 
$z\in(\UU\g_{\ol{0}}/\UU\g_{\ol{0}}\k_{\ol{0}})^{\k_{\ol{0}}}$.  Let $\Delta_{\ol{1}}^+$ denote the collection of odd positive roots, and let $S\sub\Delta_{\ol{1}}^+$ denote the subset of those roots $\alpha$ for which $\theta\alpha\neq\alpha$.  Then $-\theta$ is an involution on $S$, and by Section 6 of \cite{sherman2020iwasawa}, it admits no fixed points.  Thus let $\alpha_1,\dots,\alpha_k\in S$ be distinct representatives of the orbits of $-\theta$ on $S$; then $\p_{\ol{1}}$ admits a basis given by
\[
x_i=e_{\alpha_i}-\theta e_{\alpha_i}, \ \ \ \ y_i=e_{-\alpha_i}-\theta e_{-\alpha_i}.
\]
Therefore we may represent $v_{\k'}$ in the following way
\[
v_{\k'}=x_ky_k\cdots x_1y_1+l.o.t.
\]
where $l.o.t.$ are lower order terms in our filtration, given by monomials in our basis above.  Thus we may write
\[
v_{\k'}z=x_ky_k\cdots x_1y_1z+l.o.t.
\]
Suppose that $z$ is of degree $r$; then it suffices to show that 
\[
\deg HC(x_ky_k\cdots x_1y_1z)=k+r.
\]
In other words we can work up to terms of degree lower than $k+r$.  Begin by writing $x_k=(e_{\alpha_k}+\theta e_{\alpha_k})-2e_{\alpha_k}$; then since $e_{\alpha_k}\in\n$, we obtain
\[
HC(x_ky_k\cdots x_1y_1z)=HC((e_{\alpha_k}+\theta e_{\alpha_k})y_k\cdots x_1y_1z).
\]
Set $h_{\ol{\alpha_i}}=h_{\alpha_i}-\theta h_{\alpha_i}$; then we have
\[
[e_{\alpha_k}+\theta e_{\alpha_k},y_k]=h_{\ol{\alpha_k}}+(1-\theta)[\theta e_{\alpha},e_{-\alpha}].
\]
Thus we have
\begin{eqnarray*}
HC(x_ky_k\cdots x_1y_1z)& = &HC(h_{\ol{\alpha_k}}x_{k-1}y_{k-1}\cdots x_1y_1z)+HC((1-\theta)[\theta e_{\alpha},e_{-\alpha}]x_{k-1}y_{k-1}\cdots x_1y_1z)\\
                        & + &\sum(\pm)HC(y_k\cdots [e_{\alpha_k}+\theta e_{\alpha_k},w_i]y_i\cdots z)\\
                        & + &HC(y_k\cdots x_1y_1[e_{\alpha_k}+\theta e_{\alpha_k},z]).
\end{eqnarray*}
In the above sum $w_i$ is either $x_i$ or $y_i$.  We will show that all but the first term will have degree strictly less than $r+k$.  Starting with the second term, we may write $(1-\theta)[\theta e_{\alpha},e_{-\alpha}]=n+k$, where $n\in\n$ and $k\in\k$.  Then we have
\begin{eqnarray*}
HC((1-\theta)[\theta e_{\alpha},e_{-\alpha}]x_{k-1}y_{k-1}\cdots x_1y_1z)& = &HC(x_{k-1}y_{k-1}\cdots x_1y_1[k,z])\\
                                                                         & + &\sum HC(x_{k-1}\cdots [k,w_i]\cdots y_1z).
\end{eqnarray*}
Since $k$ is even, $[k,w_i]$ will continue to be odd, and thus $HC(x_{k-1}\cdots [k,w_i]\cdots y_1z)$ will be of degree at most $r+k-1$.  Further, $[k,z]$ will be of filtered degree $r-1$, and thus $HC(x_{k-1}y_{k-1}\cdots x_1y_1[k,z])$ will be of degree at most $r+k-1$ as well.  This deals with the second term in our large sum.

For the third term, we observe that if $\alpha_i\neq\alpha_k$, then 
\[
[e_{\alpha_k}+\theta e_{\alpha_k},w_i]=(1-\theta)f_{\beta}+(1-\theta)f_{\gamma},
\]
where either $\beta=\alpha_k\pm\alpha_i,\gamma=\alpha_k\pm\theta\alpha_i$ are roots and $f_{\beta},f_{\gamma}$ are root vectors, or $\beta,\gamma$ are not roots and $f_{\beta},f_{\gamma}$ are zero.  In any case $\beta,\gamma\neq0$, and $f_{\beta},f_{\gamma}$ will be even root vectors.  Thus we may write $[e_{\alpha_k}+\theta e_{\alpha_k},w_i]=n+k$ where $n\in\n_{\ol{0}}$ and $k\in\k_{\ol{0}}$.  Now we move $n$ all the way to the left and $k$ all the way to the right, and because they are even the terms we obtain will all have filtered degree at most $r+k-1$, as desired.

Finally, we observe that $[e_{\alpha_k}+\theta e_{\alpha_k},z]$ is odd and will live in the $r-\frac{1}{2}$ part of our filtration.  Thus the third and last term in our sum will have degree at most $r+k-1$ once again; thus we have shown that
\[
HC(x_ky_k\cdots x_1y_1z)=HC(h_{\ol{\alpha_k}}x_{k-1}y_{k-1}\cdots x_1y_1z)+l.o.t.
\]
Because $\h$ normalizes $\n$, we may continue inductively to obtain that
\[
HC(x_ky_k\cdots x_1y_1z)=HC(h_{\ol{\alpha_k}}\cdots h_{\ol{\alpha_1}}z)+l.o.t.
\]
To finish, we write $z=n+HC(z)+k$, where $n\in\n\UU\g$ and $k\in\UU\g\k$, and since $\h$ normalizes $\n$ we find that
\[
HC(x_ky_k\cdots x_1y_1z)=(-1)^{k}h_{\ol{\alpha_k}}\cdots h_{\ol{\alpha_1}}HC(z)+l.o.t.
\]
as desired.
\end{proof}

\begin{remark}\label{rmk injectivity alldridge}
	Note that one of the consequences of \cite{alldridge2012harish} is that $HC$ is injective on $\ZZ_{G/K}$.
\end{remark}

The following is a consequence of the proof of Theorem \ref{thm_injectivity}:
\begin{cor}\label{cor highest order term}
  With $\alpha_1,\dots,\alpha_k$ as in the proof of Theorem \ref{thm_injectivity}, we have
	\[
	HC(v_{\k'}z)=(-1)^{k}h_{\ol{\alpha_k}}\cdots h_{\ol{\alpha_1}}HC(z)+l.o.t.
	\]
\end{cor}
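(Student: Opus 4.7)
The plan is to observe that the formula in the corollary is precisely the leading-term assertion that is established en route to proving Theorem \ref{thm_injectivity}, so almost no new work is required; what I want to do is cleanly extract that leading term and track the sign contributed by the antipode $\sigma_\a$ built into $HC$.

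More concretely, I would begin by recalling that in the proof of Theorem \ref{thm_injectivity} the element $v_{\k'} \in \UU\g/\UU\g\k$ was represented as $x_k y_k \cdots x_1 y_1 + l.o.t.$ with respect to the (halved) filtration of Sec.~8.6 of \cite{sherman2021ghost}, where $x_i = e_{\alpha_i} - \theta e_{\alpha_i}$ and $y_i = e_{-\alpha_i} - \theta e_{-\alpha_i}$. The inductive computation carried out there shows that after writing $x_k = (e_{\alpha_k} + \theta e_{\alpha_k}) - 2e_{\alpha_k}$, discarding $e_{\alpha_k}\in\n$ (which vanishes under projection onto $S(\a)$), and using $[e_{\alpha_k} + \theta e_{\alpha_k}, y_k] = h_{\overline{\alpha_k}} + (1-\theta)[\theta e_{\alpha_k}, e_{-\alpha_k}]$, every term other than $h_{\overline{\alpha_k}} x_{k-1} y_{k-1} \cdots x_1 y_1 z$ lies in strictly smaller filtered degree than $k+r$, where $r = \deg z$. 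Iterating this $k$ times gives
\[
HC(x_k y_k \cdots x_1 y_1 z) = HC(h_{\overline{\alpha_k}} \cdots h_{\overline{\alpha_1}} z) + l.o.t.
\]

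To finish, I would decompose $z = n + HC(z) + k'$ with $n \in \n\UU\g$ and $k' \in \UU\g\k$, and observe that since $\h$ normalizes $\n$ and contains $\a$, the product $h_{\overline{\alpha_k}} \cdots h_{\overline{\alpha_1}} n$ stays in $\n\UU\g$ and hence contributes nothing to the projection, while $h_{\overline{\alpha_k}} \cdots h_{\overline{\alpha_1}} k' \in \UU\g\k$ is also killed. Thus the projection of $h_{\overline{\alpha_k}} \cdots h_{\overline{\alpha_1}} z$ onto $S(\a)$ equals $h_{\overline{\alpha_k}} \cdots h_{\overline{\alpha_1}} HC(z)$ before applying the antipode $\sigma_\a$.

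The only remaining point — and the one subtlety worth stating explicitly — is the sign. Since $HC$ is defined as the projection onto $S(\a)$ followed by the antipode $\sigma_\a$ of $\a$, and since $\sigma_\a(h) = -h$ for $h \in \a$ extended multiplicatively to the symmetric algebra, applying $\sigma_\a$ to the degree-$k$ term $h_{\overline{\alpha_k}} \cdots h_{\overline{\alpha_1}} \cdot HC(z)$ produces the factor $(-1)^k$, yielding
\[
HC(v_{\k'} z) = (-1)^k h_{\overline{\alpha_k}} \cdots h_{\overline{\alpha_1}} HC(z) + l.o.t.
\]
as claimed. The main obstacle is purely bookkeeping: confirming that every bracket term produced during the commutation lands in a strictly smaller piece of the filtration, which is exactly the content of the case analysis already carried out in the proof of Theorem \ref{thm_injectivity}.
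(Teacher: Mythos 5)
Your proposal is correct and follows exactly the same route as the paper: the corollary is extracted directly from the final displayed identity in the proof of Theorem \ref{thm_injectivity}, which is obtained by the very reduction you describe (representing $v_{\k'}$ by $x_ky_k\cdots x_1y_1$ modulo lower-order terms, peeling off $e_{\alpha_k}$ into $\n$, commuting, and iterating). Your explicit tracking of the $(-1)^k$ via the antipode $\sigma_\a$ on $S(\a)$ is a welcome clarification: the paper simply records the sign in its last line, while you correctly isolate it as coming from $\sigma_\a(h_{\ol{\alpha_i}})=-h_{\ol{\alpha_i}}$ applied to the $k$ new linear factors.
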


\begin{definition}
	We say a weight $\lambda\in\Lambda^+$ is $(\g,\k)$-typical if $\operatorname{Res}_{K'}V(\lambda)$ contains a copy of $I_{K'}(\Bbbk)$.
\end{definition}
By the work of \cite{sherman2021ghost}, $(\g\times\g,\g)$-typical weights are in natural bijection with typical dominant integrable weights of $\g$, which explains the terminology.

\begin{cor}
	The set of $(\g,\k)$-typical weights in $\Lambda^+$ is given by the intersection of $\Lambda^+$ with a nonempty, Zariski open subset of $\a^*$.  
\end{cor}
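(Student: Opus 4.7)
The plan is to rewrite the typical/atypical dichotomy as the complement of a Zariski closed subset cut out by the polynomials $HC(\gamma)$, and then to check that this closed subset is proper.

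First, I would invoke \cref{lemma branching} to identify the typical weights: $\lambda \in \Lambda^+$ is $(\g,\k)$-typical if and only if there exists $\gamma \in \AA_{G/K}$ with $HC(\gamma)(\lambda) \neq 0$. Equivalently, the atypical weights in $\Lambda^+$ are precisely $\Lambda^+ \cap Z$, where
\[
Z := \{\lambda \in \a^* : HC(\gamma)(\lambda) = 0 \text{ for all } \gamma \in \AA_{G/K}\}.
\]
Since each $HC(\gamma) \in S(\a)$ defines a polynomial function on $\a^*$, the set $Z$ is the common vanishing locus of a collection of polynomials and is therefore Zariski closed. Consequently, the typical weights form $\Lambda^+ \cap U$ for $U := \a^* \setminus Z$, which is Zariski open.

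It remains to produce a single $\gamma \in \AA_{G/K}$ with $HC(\gamma) \neq 0$, so that $U$ is nonempty. For this I would use the explicit isomorphism of Section 2,
\[
\Dist(G_0/K_0,eK_0)^{K_0} \xrightarrow{\sim} \AA_{G/K}, \qquad z \mapsto v_{\k'} z,
\]
and take $z = 1$ to obtain $v_{\k'} \in \AA_{G/K}$. Since $v_{\k'}$ is nonzero in $\UU\g/\UU\g\k$, the injectivity established in \cref{thm_injectivity} forces $HC(v_{\k'}) \neq 0$. Alternatively, and more explicitly, \cref{cor highest order term} computes the leading term of $HC(v_{\k'})$ as $(-1)^k h_{\ol{\alpha_k}} \cdots h_{\ol{\alpha_1}}$, which is manifestly nonzero in $S(\a)$.

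Either argument shows $U \neq \emptyset$, completing the proof. No genuine obstacle is expected here: the substance is already carried by \cref{lemma branching} and \cref{thm_injectivity}, and this corollary is essentially the observation that a nonempty finite-dimensional subspace of $S(\a)$ has nonempty Zariski open non-vanishing locus.
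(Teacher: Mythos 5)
Your proof is correct and takes essentially the same approach as the paper: identify the typical weights via \cref{lemma branching} as the nonvanishing locus of $HC(\AA_{G/K})$, which is Zariski open, and note it is nonempty since $\AA_{G/K}\neq0$ and $HC$ is injective by \cref{thm_injectivity}. Your version is somewhat more explicit about the nonemptiness step (exhibiting $v_{\k'}$ and invoking \cref{cor highest order term}), but there is no substantive difference from the paper's argument.
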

\begin{proof}
 Let $U\sub\a^*$ be the union of the nonvanishing sets of $HC(D)$ for $D\in\AA_{G/K}$.  Then $U\cap \Lambda^+$ will consist exactly of the $(\g,\k)$-typical weights.
\end{proof}

\begin{cor}\label{cor generic typicality}
	There exists a Zariski open subset $U$ of $\a^*$ such that for all $\lambda\in U\cap\Lambda^+$:
	\begin{enumerate}
		\item[(i)] $V(\lambda)$ is irreducible;
		\item[(ii)] $\lambda$ is $(\g,\k)$-typical.
	\end{enumerate}
\end{cor}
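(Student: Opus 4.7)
The approach is to construct $U$ as an intersection $U = U_{\mathrm{typ}} \cap U_{\mathrm{irr}}$ of two Zariski open subsets of $\a^*$, one for each of (ii) and (i) respectively.

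For (ii), I would just invoke the preceding corollary. By \cref{thm_injectivity}, the map $HC$ is injective, and $\AA_{(\g,\k)} \neq 0$, so there is some $\gamma \in \AA_{G/K}$ with $HC(\gamma) \neq 0$. The common vanishing locus $Z := V(HC(\AA_{G/K})) \subseteq \a^*$ is Zariski closed (it equals the zero set of the ideal of $S(\a)$ generated by $HC(\AA_{G/K})$, which is finitely generated by Noetherianity), and $U_{\mathrm{typ}} := \a^* \setminus Z$ is a nonempty Zariski open. By \cref{lemma branching}, every $\lambda \in U_{\mathrm{typ}} \cap \Lambda^+$ is $(\g,\k)$-typical.

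For (i), the key observation is that $V(\lambda)$ is a highest weight module. Since the Iwasawa Borel $\b = \mathfrak{t}\oplus\a\oplus\n$ satisfies $\b\cap\k = \mathfrak{t}$, the weight $\tilde\lambda \in \h^*$ of $f_\lambda$ is given by $\tilde\lambda|_{\mathfrak{t}}=0$, $\tilde\lambda|_\a = \lambda$, so $V(\lambda) = \UU\g f_\lambda$ is a quotient of the Verma $M(\tilde\lambda)$. A proper sub-$\g$-module of $V(\lambda)$ would contain a $\b$-highest weight vector which, by the multiplicity-one statement for rational $\b$-eigenfunctions on $G/K$ recalled in Section~2, is a scalar multiple of some $f_\mu$ with $\mu\in\Lambda^+$ and $\mu<\lambda$. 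Thus $V(\lambda)$ fails to be irreducible exactly when $f_\mu\in V(\lambda)$ for some such $\mu$, and by \cref{cor_nec_condition_extns} this forces $\mathrm{ev}_{-\lambda}$ and $\mathrm{ev}_{-\mu}$ to be proportional on all of $HC(\AA_{G/K})$.

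The main obstacle is to collapse the infinite collection of conditions (indexed by $\mu \in \Lambda^+$ with $\mu < \lambda$) into a single Zariski closed locus. I plan to exploit \cref{cor highest order term}, which gives the top symbol of $HC(v_{\k'} z)$ explicitly as $(-1)^k h_{\ol{\alpha_k}}\cdots h_{\ol{\alpha_1}} HC(z)$. For $\lambda$ outside the zero locus of this leading polynomial, proportionality of $\mathrm{ev}_{-\lambda}$ and $\mathrm{ev}_{-\mu}$ across a sequence of elements of increasing degree in $HC(\AA_{G/K})$ should constrain $\mu$ to a finite set of translates of $\lambda$ by elements of a fixed subset of the weight lattice, reducing the reducibility locus to a finite union of Zariski closed subsets. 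Defining $U_{\mathrm{irr}}$ as the complement of that finite union and setting $U := U_{\mathrm{typ}} \cap U_{\mathrm{irr}}$ will give the desired Zariski open subset.
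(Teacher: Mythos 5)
Your handling of part (ii) matches the paper: the preceding corollary already exhibits the typical weights in $\Lambda^+$ as the intersection of $\Lambda^+$ with the Zariski open complement of the common zero locus of $HC(\AA_{G/K})$, and your argument is essentially the same proof. The divergence --- and the gap --- is in part (i). The paper does not attempt to derive generic irreducibility from the Harish-Chandra data at all; it simply cites Thm.\ 6.4.4 of \cite{sherman2020sphericalthesis}, which proves generic irreducibility of the modules $V(\lambda)$ by an independent argument.

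Your proposed route to (i) cannot be completed as sketched. The key issue is that \cref{cor_nec_condition_extns} gives only a \emph{necessary} condition, and that condition is not restrictive enough to carve out a proper Zariski closed locus. Concretely, the proportionality constant $c$ is allowed to be $0$: if $HC(\gamma)(-\mu)=0$ for every $\gamma\in\AA_{G/K}$ --- i.e.\ if $\mu$ is \emph{not} $(\g,\k)$-typical --- then the conclusion of \cref{cor_nec_condition_extns} holds vacuously with $c=0$, regardless of $\lambda$. Since non-typical weights $\mu\in\Lambda^+$ form an infinite set, the set of $\lambda$ lying above some such $\mu$ is not contained in any proper Zariski closed subset of $\a^*$. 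So even after invoking \cref{cor highest order term} to control leading terms, the argument can at best rule out $f_\mu\in V(\lambda)$ for \emph{typical} $\mu\neq\lambda$; it says nothing about embeddings $f_\mu\in V(\lambda)$ with $\mu$ atypical, and ruling those out is exactly what generic irreducibility requires. You would need an additional input (which is in effect what Thm.\ 6.4.4 of the thesis supplies) to close this gap.
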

\begin{proof}
	Only the generic irreducibility needs to be justified; however this follows from Thm. 6.4.4, \cite{sherman2020sphericalthesis}.
\end{proof}

\section{List of Supersymmetric Pairs and Interlacing Automorphisms}

In this section we will be working purely algebraically, without reference to any choice of a specific global supersymmetric space $G/K$.  

\subsection{Table of supersymmetric pairs}  We begin this section with the following table of all supersymmetric pairs that are of the type that we consider.  The table states whether they satisfy the Iwasawa decomposition, and describes the GRS (generalized root system) automorphism that $\theta$ induces on $\h^*=\mathfrak{t}^*\oplus\a^*$.  The notation we use below for root systems (along with the table) is taken from \cite{sherman2020iwasawa}.

\renewcommand{\arraystretch}{2}
\begin{center}
\begin{tabular}{|c|c|c|}
	\hline 
	Supersymmetric Pair & Iwasawa Decomposition? & GRS Automorphism \\
	\hline
	%$(\g\times\g,\g)$ & Yes & $(\alpha,\beta)\mapsto(\beta,\alpha)$\\
	%\hline
	\makecell{$(\g\l(m|n)$,\\ $\g\l(r|s)\times\g\l(m-r|n-s))$} & Iff $(m-2r)(n-2s)\geq0$   & \makecell{$\epsilon_i\leftrightarrow\epsilon_{m-i+1}, 1\leq i\leq r$,\\ $\delta_j\leftrightarrow\delta_{n-j+1}, 1\leq j\leq s$} \\
	\hline
	$(\g\l(m|2n),\o\s\p(m|2n))$ & Yes &  $\epsilon_i\leftrightarrow-\epsilon_i, \ \ \delta_i\leftrightarrow-\delta_{2n-i+1}$ \\
	\hline
	\makecell{$(\o\s\p(m|2n)$,\\$\o\s\p(r|2s)\times\o\s\p(m-r,2n-2s))$}& Iff $(m-2r)(n-2s)\geq0$  & \makecell{$\epsilon_i\leftrightarrow-\epsilon_i, 1\leq i\leq r$\\ $\delta_i\leftrightarrow\delta_{n-i+1}, 1\leq i\leq s$}\\
	\hline
	$(\o\s\p(2m|2n),\g\l(m|n))$  & Yes & $\delta_i\leftrightarrow-\delta_i,\ \epsilon_i\leftrightarrow-\epsilon_{m-i+1}$ \\
	\hline
	$(\mathfrak{d}(1,2;\alpha),\o\s\p(2|2)\times\s\o(2))$ &  Yes & $\epsilon\leftrightarrow-\epsilon,\delta\leftrightarrow-\delta$  \\
	\hline
	$(\mathfrak{ab}(1|3),\g\o\s\p(2|4))$ & Yes & $\epsilon_1\leftrightarrow	-\epsilon_1,\delta\leftrightarrow-\delta$\\
	\hline
	$(\mathfrak{ab}(1|3),\s\l(1|4))$ & Yes & $\epsilon_1\leftrightarrow-\epsilon_1,\epsilon_2\leftrightarrow-\epsilon_2,\delta\leftrightarrow-\delta$\\
	\hline
	$(\mathfrak{ab}(1|3),\mathfrak{d}(1,2;2))$ & Yes & $\epsilon_i\leftrightarrow-\epsilon_i$\text{ for all }$i$\\
	\hline
	$(\mathfrak{g}(1|2),\mathfrak{d}(1,2;3))$ & Yes &  $\epsilon_i\leftrightarrow-\epsilon_i$\text{ for all }$i$\\
	\hline
	$(\mathfrak{g}(1|2),\o\s\p(3|2)\times\s\l_2)$ & No &  $\epsilon_i\leftrightarrow-\epsilon_i$\text{ for all }$i$\\
	\hline
\end{tabular}
\end{center}

\subsection{Interlacing automorphisms $(\g,\k)\cong(\g,\k')$}

In the case that we have an isomorphism of supersymmetric pairs $(\g,\k)\cong(\g,\k')$, meaning that there exists an automorphism of $\g$ taking $\k$ to $\k'$, we will show we can take this isomorphism to be of a special form.  For a Cartan subspace $\a$ of $\g$ we write $A$ for the connected torus in $\operatorname{Inn}(\g)$ that it corresponds to, where $\operatorname{Inn}(\g)$ denotes the inner automorphisms of $\g$. 

\begin{definition}
	We say that an automorphism $\phi$ of $\g$ interlaces $(\g,\k)$ and $(\g,\k')$ with respect to a Cartan subspace $\a$ if:
	\begin{enumerate}
		\item[(i)] $\phi\theta=\delta\theta\phi$;
		\item[(ii)] $\phi=\Ad(t)$ is inner with $t\in A$, we have $\Ad(t^2)=\delta$, $t^4=1$.  In particular, $\theta t^2=t^2$.
	\end{enumerate}
In this case we call $\phi$ an interlacing automorphism, and say that $(\g,\k)$ is interlaced.
\end{definition}
In particular an interlacing automorphism satisfies $\phi(\k)=\k'$ and $\phi(\k')=\k$.  It is easy to check that the inverse of an interlacing automorphism is again an interlacing automorphism.  Further, because an interlacing automorphism $\phi$ fixes $\a$ pointwise we have $\phi(\n)=\n$, and thus $\phi$ takes the Iwasawa decomposition $\g=\k\oplus\a\oplus\n$ to another Iwasawa decomposition $\g=\k'\oplus\a\oplus\n$.  

We have the following satisfying theorem.
\begin{thm}\label{thm_interlacing_pairs}
	Let $(\g,\k)$ be a supersymmetric pair from our above list; then the following are equivalent:
	\begin{enumerate}
		\item[(i)] $(\g,\k)$ admits an interlacing automorphism;
		\item[(ii)] $(\g,\k)$ is conjugate to $(\g,\k')$;
		\item[(iii)] both $(\g,\k)$ and $(\g,\k')$ admit Iwasawa decompositions.
	\end{enumerate}   
\end{thm}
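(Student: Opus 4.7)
The plan is to dispose of the three formal implications first, then attack $(iii)\Rightarrow(i)$ by combining a uniform dimension count with inspection of the classification table.

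For $(i)\Rightarrow(ii)$, rewriting $\phi\theta=\delta\theta\phi$ as $\phi\theta\phi^{-1}=\delta\theta$ and using $\k=\g^{\theta}$, $\k'=\g^{\delta\theta}$ immediately gives $\phi(\k)=\g^{\phi\theta\phi^{-1}}=\k'$. The implication $(i)\Rightarrow(iii)$ is already recorded in the paragraph preceding the theorem, since $\phi=\Ad(t)$ with $t\in A$ fixes $\a$ pointwise and preserves each root space, hence preserves $\n$. For $(ii)\Rightarrow(iii)$, any Lie superalgebra automorphism $\sigma$ with $\sigma(\k)=\k'$ transports an Iwasawa decomposition of one pair to an Iwasawa decomposition of the other (the image of a Cartan subspace of $\p_{\ol{0}}$ is still a Cartan subspace, because $\p_{\ol{0}}=\p'_{\ol{0}}$ and parity is preserved by $\sigma$), so combined with the cited fact from \cite{sherman2020iwasawa} that at least one of the two pairs always admits an Iwasawa decomposition we obtain both.

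For $(iii)\Rightarrow(i)$ I would first perform an algebraic reduction. Since $\theta$ acts as $-1$ on $\a$ we have $\theta(t)=t^{-1}$ for every $t\in A$, so
\[
\phi\theta\phi^{-1}=\Ad(t)\,\theta\,\Ad(t^{-1})=\Ad\!\left(t\cdot\theta(t^{-1})\right)\theta=\Ad(t^{2})\,\theta,
\]
and the defining relation of an interlacing automorphism reduces to $\Ad(t^{2})=\delta$; the conditions $t^{4}=1$ and $\theta t^{2}=t^{2}$ then follow automatically from faithfulness of the $A$-action on $\g$. Because $\Ad(t)$ acts on $\g_{\alpha}$ by a scalar depending only on $\ol{\alpha}$, the requirement becomes $t^{2\ol{\alpha}}=(-1)^{p(\alpha)}$ for every root $\alpha\in\Delta$, where $p(\alpha)$ denotes parity. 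Such a $t$ exists if and only if the prescription $\ol{\alpha}\mapsto(-1)^{p(\alpha)}$ is a well-defined character of the restricted root lattice $\ol{Q}:=\Z\ol{\Delta}$; once this is established, one extracts a square root of the corresponding element of $A$, which is possible because $\Bbbk$ is algebraically closed of characteristic zero.

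A uniform dimension argument supplies the main necessary condition. Taking odd parts of Iwasawa decompositions for both $(\g,\k)$ and $(\g,\k')$ yields
\[
\dim\p_{\ol{1}}=\dim\k_{\ol{1}}=\sum_{\ol{\alpha}\in\ol{\Delta}^{+}}(\dim\g_{\ol{\alpha}})_{\ol{1}},
\]
so $\dim\g_{\ol{1}}=2\sum_{\ol{\alpha}\in\ol{\Delta}^{+}}(\dim\g_{\ol{\alpha}})_{\ol{1}}$, and comparing with the root-space decomposition of $\g_{\ol{1}}$ forces $(\g_{0})_{\ol{1}}=0$, where $\g_{0}=\mathfrak{z}_{\g}(\a)$; in particular no odd root of $\g$ restricts to zero on $\a$. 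This already ensures parity consistency whenever $\alpha-\beta$ happens to be a root. The main obstacle, and the reason a purely uniform argument falls short, is that $\alpha-\beta$ need not be a root, and the linearity of the character on all of $\ol{Q}$ must still be checked. This I would verify by inspecting the classification table entry-by-entry: for each of the finitely many pairs in which both $(\g,\k)$ and $(\g,\k')$ admit Iwasawa decompositions, the GRS automorphism listed in the table together with the explicit restricted root data from \cite{sherman2020iwasawa} let us confirm well-definedness and write down an explicit $t=\exp(h)\in A$ with the prescribed action on each restricted root space. Since the list is finite and the root data explicit, this last step, although the bookkeeping core of the argument, is routine.
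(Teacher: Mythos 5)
Your treatment of the three easy implications is correct and consistent with the paper's (tacit) handling. For $(iii)\Rightarrow(i)$ your framing is actually cleaner than the paper's: the identity $\phi\theta\phi^{-1}=\Ad(t^{2})\theta$ (using that $\theta$ inverts $A$) reduces the whole question to finding $s\in A$ with $\Ad(s)=\delta$, and your dimension count showing $(\g_{0})_{\ol{1}}=0$ is a uniform derivation of a fact that the paper's lemma and proposition take as a \emph{hypothesis} ($0\notin\Delta_{\ol{1}}|_{\a}$) and never explicitly deduce from $(iii)$. That observation is a genuine improvement in transparency over the paper's argument.

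The gap is in the last step, which you declare routine. Whether $\ol{\alpha}\mapsto(-1)^{p(\alpha)}$ extends to a character of $\ol{Q}=\Z\ol{\Delta}$ is exactly the non-trivial content of the theorem, and your uniform argument only ensures consistency for relations coming from a root $\alpha-\beta$ restricting to zero; you acknowledge this yourself, but then wave away the remaining lattice relations as bookkeeping. The paper spends almost the entirety of its proof exactly here: it isolates, for each of the eight relevant families (two ``special'' cases via $V_{0}$, then $(\g\l(m|2n),\g\l(m-r|n)\times\g\l(r|n))$, $(\o\s\p(2m|2n),\g\l(m|n))$, two $\o\s\p$ families, $\d(1,2;\alpha)$, and three $\a\b(1|3)$/$\g(1|2)$ cases), a $\theta$-stable irreducible component $V_{i}$ of the reduced root system on which $\ol{\Delta_{\ol{1}}^{i}}$ is a single Weyl orbit up to sign and $\langle\ol{\Delta_{\ol{0}}^{i}}\rangle$ has index two in $\langle\ol{\Delta_{\ol{1}}^{i}}\rangle$, and uses these two structural facts to exhibit $t\in A_{i}$ acting by $\pm i$ on odd and $\pm1$ on even restricted root spaces. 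Those index-two and single-orbit facts are precisely the verification that your sign character is well-defined; a proof that stops before checking them has not shown $(iii)\Rightarrow(i)$. So: good setup, correct reduction, but the argument must actually enumerate the classification to close.
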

The proofs of (i)$\implies$(ii) and (ii)$\implies$ (iii) are clear.  The proof of (iii)$\implies$(1) follows from studying the possible reduced generalized root systems that are obtained in the cases we need to consider.  We begin by recalling that the pair $(\Delta,\h^*)$ may be viewed as an irreducible generalized root system (GRS) in the sense of \cite{serganova1996generalizations}.   We may write $\Delta_{re}\sub\Delta$ for the roots that are nonisotropic, so that $\Delta_{re}\sub\h^*$ defines a root system in the classical sense.  In particular it decomposes into irreducible components $\Delta=\Delta_1\sqcup\cdots\sqcup\Delta_k$, and correspondingly $\h^*=V_0\oplus V_1\oplus\cdots\oplus V_k$, where $\Delta_i\sub V_i$ is an irreducible root system, and $V_0=(V_1\oplus\cdots\oplus V_k)^\perp$. 

Now the involution $\theta$ induces an automorphism on $\h^*$; if we have $\theta(V_i)=V_i$ then we obtain a projection map $p_i:\a^*\to\a^*\cap V_i$.  

\begin{lemma}
	We have $0\notin\Delta_{\ol{1}}|_{\a}$ if and only if for some $i$ we have $\theta V_i=V_i$ and $0\notin p_i(\Delta_{\ol{1}})$.
\end{lemma}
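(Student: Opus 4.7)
The plan is to prove the two directions separately: the reverse implication is essentially immediate from the way $p_i$ is defined, while the forward direction requires a case-by-case verification using the classification table from Section 4.1.

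For $(\Leftarrow)$, assume $\theta V_i = V_i$ and $0 \notin p_i(\Delta_{\ol{1}})$. Since $p_i\colon \a^* \to \a^* \cap V_i$ is well defined under these hypotheses and the set $p_i(\Delta_{\ol{1}})$ is to be understood as $\{p_i(\alpha|_{\a}) : \alpha \in \Delta_{\ol{1}}\}$, any odd root $\alpha$ with $\alpha|_{\a} = 0$ would automatically give $p_i(\alpha|_{\a}) = 0$, contradicting the hypothesis. Hence $0 \notin \Delta_{\ol{1}}|_{\a}$.

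For $(\Rightarrow)$, assume $0 \notin \Delta_{\ol{1}}|_{\a}$. I would go through each line of the table in Section 4.1 and exhibit, in each case, a specific $\theta$-stable irreducible component $V_i$ on which every restricted odd root projects nontrivially. Writing $\alpha|_{\a} = \tfrac{1}{2}(\alpha - \theta\alpha)$ puts each such projection in closed form. In the classical families one chooses a natural candidate---for instance $V_1$, the $\epsilon$-span, for $(\g\l(m|n), \g\l(r|s) \times \g\l(m-r|n-s))$, since an odd root $\epsilon_a - \delta_b$ restricted to $\a$ projects to $\tfrac{1}{2}(\epsilon_a - \epsilon_{m-a+1})$, which vanishes only when $a = (m+1)/2$---and one uses the hypothesis $0 \notin \Delta_{\ol{1}}|_{\a}$ to exclude the parity configuration in which this projection could vanish. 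An analogous candidate works throughout the $\o\s\p$-based rows of the table, and the exceptional pairs involving $\mathfrak{d}(1,2;\alpha)$, $\g(1,2)$, and $\a\b(1|3)$ reduce to a small finite check using the explicit $\theta$-action recorded in the table.

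The main obstacle is the bookkeeping of the case analysis, particularly for the $\o\s\p$-Grassmannian pair $(\o\s\p(m|2n), \o\s\p(r|2s) \times \o\s\p(m-r|2n-2s))$, where the distinct parity subcases of $m$, $n$, $r$, $s$ require slightly different choices of $V_i$. In every subcase, however, the same mechanism applies: ruling out $\theta$-fixed odd roots via the hypothesis $0 \notin \Delta_{\ol{1}}|_{\a}$ forces a configuration on the indices that keeps one of the candidate projections $p_i$ nonvanishing on every restricted odd root. Assembling these per-case observations produces the required $i$ and completes the proof.
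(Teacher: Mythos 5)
Your overall strategy mirrors the paper's exactly: the backward direction is the observation that an odd root restricting to zero on $\a$ certainly projects to zero under any $p_i$, and the forward direction is a case-by-case verification against the classification table. Your backward-direction argument is correct as stated.

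One caution on the illustrative $\g\l$ computation: the projection of $\epsilon_a-\delta_b$ to the $\epsilon$-span is $\tfrac12(\epsilon_a-\theta\epsilon_a)$, which vanishes for \emph{every} $\theta$-fixed index $a$ (i.e.~all $a$ with $r<a<m-r+1$, assuming $r\le m/2$), not only $a=(m+1)/2$. Consequently $V_1$ fails whenever there are $\theta$-fixed $\epsilon$-indices but no $\theta$-fixed $\delta$-indices, and in that configuration one must take $V_2$ (the $\delta$-span) instead; the hypothesis $0\notin\Delta_{\ol{1}}|_\a$ only guarantees that one of the two sides has no $\theta$-fixed indices, and the candidate component has to be matched to that side. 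The proposition that follows the lemma in the paper makes this case split explicit (including a separate treatment of $V_0$). Also worth flagging: the paper's proof notes that $\theta V_i\sub V_i$ can fail for some $i$ when $\g=\o\s\p(4|2n)$ (two $\s\l(2)$ factors in $\g_{\ol 0}$ may be swapped), so the forward direction must be careful not to assume every component is $\theta$-stable. These are refinements rather than a wrong approach, but they need to be in place for the case check to close.
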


\begin{proof}
	The backwards direction is clear; for the forward direction we work case by case.  Note that by the proof of Prop. 3.1 of \cite{sherman2020iwasawa}, all pairs have $\theta V_i\sub V_i$ for all $i$ except possibly when $\g=\o\s\p(4|2n)$. 
\end{proof}

\begin{prop}
	If $0\notin\Delta_{\ol{1}}|_{\a^*}$, then there exists an $i$ such that the following hold: 
	\begin{enumerate}
		\item[(i)] $\theta V_i=V_i$;
		\item[(ii)] if $A_i\sub A$ denotes the subgroup determined by $V_i\cap \a^*$, then there exists $t\in A_i$ such that $\phi=\Ad(t)$ is an interlacing automorphism.
	\end{enumerate}
\end{prop}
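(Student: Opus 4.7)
My plan is first to reduce the two defining conditions of an interlacing automorphism to a single character condition on $t \in A_i$, then to construct such $t$ case-by-case using the classification table.

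For $t \in A$ and a root $\alpha \in \Delta$, write $\chi_\alpha(t)$ for the scalar by which $\Ad(t)$ acts on $\g_\alpha$. Since $\theta$ acts as $-\id$ on $\a$, the induced involution on $\h^*$ satisfies $\chi_{\theta\alpha}(t) = \chi_\alpha(t)^{-1}$ for every $t \in A$. A direct computation then shows that for such $t$ the condition $\Ad(t)\theta = \delta\theta\Ad(t)$ on $\g_\alpha$ amounts to $\chi_\alpha(t)^2 = 1$ if $\alpha$ is even and $\chi_\alpha(t)^2 = -1$ if $\alpha$ is odd; but this is identical to the condition $\Ad(t^2) = \delta$, so the two requirements coincide, and $t^4 = 1$ follows automatically from $\chi_\alpha(t^4) = 1$ for all $\alpha$. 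The problem thus reduces to producing $t \in A_i$ satisfying this single character condition on every root of $\Delta$; the hypothesis $0 \notin p_i(\Delta_{\ol 1})$ from the preceding lemma is precisely what prevents the $-1$ condition from being imposed on a trivial character, since $\chi_\alpha(t) = \chi_{p_i(\alpha)}(t)$ for $t \in A_i$.

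The main step is then a case-by-case inspection using the classification table and the GRS automorphisms recorded there. For each pair $(\g, \k)$ satisfying $0 \notin \Delta_{\ol 1}|_{\a^*}$, I would identify the $\theta$-stable irreducible component $V_i$ on which all odd roots project nontrivially and exhibit $t$ explicitly in terms of its eigenvalues on the coordinate functionals $\epsilon_i, \delta_j$, taking values in $\{\pm 1, \pm \xi\}$ for a fixed primitive fourth root of unity $\xi$. For the classical families this amounts to a diagonal prescription patterned after the GRS automorphism listed in the table; for instance, in $(\g\l(m|2n), \o\s\p(m|2n))$ one takes $\chi_{\epsilon_i}(t) = 1$ and arranges $\chi_{\delta_j}(t)$ so that $\chi_{\epsilon_i - \delta_j}(t)^2 = -1$ for all $i, j$.

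The main obstacle will be the bookkeeping across all the families: verifying simultaneously the $+1$ condition on every even root and the $-1$ condition on every odd root, while keeping $t$ inside the smaller subgroup $A_i$ rather than the full $A$. The exceptional pairs $\mathfrak{d}(1,2;\alpha)$, $\g(1|2)$, and $\a\b(1|3)$ have small enough root systems to dispatch by direct inspection, whereas the classical families $\g\l(m|n)$ and $\o\s\p(m|2n)$ require a systematic check that reduces to concrete combinatorial patterns on the $\epsilon_i, \delta_j$ coordinates.
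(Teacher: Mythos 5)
Your reduction of the two defining conditions of an interlacing automorphism to the single requirement that $\chi_\alpha(t)^2 = (-1)^{\ol{\alpha}}$ for every root $\alpha$ (with $t^4=1$ following automatically) is exactly the observation the paper uses — the paper states it in the form ``$\Ad(t)$ acts by $\pm i$ on odd root spaces and by $\pm 1$ on even root spaces'' and then verifies the interlacing axioms, while you derive the equivalence directly, but the content is the same. The subsequent plan of a case-by-case construction of $t\in A_i$ is also the paper's route; the one thing your sketch leaves implicit and the paper makes explicit is the structural criterion that makes the case-check systematic: when $V_0\neq 0$ one uses that $\Delta_{\ol{1}}$ splits into two dual $A_0$-orbits (the special pairs), and when $i>0$ one uses that $\ol{\Delta_{\ol{1}}^i}$ is a single Weyl orbit up to sign and that the lattice generated by $\ol{\Delta_{\ol{0}}^i}$ has index two in the one generated by $\ol{\Delta_{\ol{1}}^i}$, which guarantees the existence of $t\in A_i$ with the required eigenvalues.
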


\begin{proof}
	We will show that in each case we can find $t\in A$ such that $\Ad(t)$ acts by $\pm i$ on the root spaces of $\g_{\ol{1}}$, and by $\pm1$ on root spaces of $\g_{\ol{0}}$. In particular we will have $\Ad(t^{-1})=\delta\circ\Ad(t)$.  Since $\theta\circ\Ad(t)\circ\theta=\Ad(t^{-1})$, this will give $\theta\Ad(t)=\delta\Ad(t)\theta$, and further it is obvious that $\Ad(t)$ fixes $\a$.  Finally, $\Ad(t^2)$ will act by the identity on $\g_{\ol{0}}$ and have eigenvalue $(-1)$ on $\g_{\ol{1}}$, implying that it is $\delta$.    
	
	The proof is done in cases.  First of all we always have $\theta V_0=V_0$ when $V_0\neq0$, and if $0\notin p_0(\Delta_{\ol{1}})$ then as is shown in \cite{serganova1996generalizations}, $A_0$ is a one-dimensional torus and $\Delta_{\ol{1}}=\Delta_{\ol{1}}'\sqcup\Delta_{\ol{1}}''$ consists of two components on which $A_0$ acts faithfully by dual characters.  Thus we may let $t\in A_0$ be an order 4 element which acts by $i$ on $\Delta_{\ol{1}}'$ and $(-i)$ on $\Delta_{\ol{1}}''$.  These cases are given by $(\g\l(m|2n),\o\s\p(m|2n))$ and $(\o\s\p(2|2n),\o\s\p(1|2s)\times\o\s\p(1|2n-2s))$.  (Such pairs are called \emph{special}; they will be studied further in Section 7.)
	
	For the remaining cases we need to take an $i>0$.  Write $\ol{\Delta_{j}^i}$ for the image of $\Delta_{j}$ in $\a^*\cap V_i$. We will see that there always exists an $i>0$ such that the following happens:
	\begin{itemize}
		\item $\theta V_i=V_i$;
		\item the image of $\ol{\Delta_{\ol{1}}^i}$ is a single Weyl group orbit up to sign; and
		\item the lattice generated by $\ol{\Delta_{\ol{0}}^i}$ is of index two inside of the lattice generated by $\ol{\Delta_{\ol{1}}^i}$.
	\end{itemize}
	Given the above properties, we may choose $t\in A_i$ which acts by $\pm i$ on $\ol{\Delta_{\ol{1}}^i}$ and by $\pm1$ on $\ol{\Delta_{\ol{0}}^i}$.  
	
	Now we go through all the remaining supersymmetric pairs satisfying $0\notin\Delta_{\ol{1}}|_{\a}$.  Recall that we follow the notation of \cite{sherman2020iwasawa}.
	
	\begin{itemize}
		\item $(\g\l(m|2n),\g\l(m-r|n)\times\g\l(r|n))$: the component $V_2$ of the root system spanned by $\delta_1,\dots,\delta_{2n}$ is $\theta$-stable and intersects $\a^*$ with basis $\ol{\delta_1},\dots,\ol{\delta_n}$, where $\ol{\delta_i}:=\frac{1}{2}(\delta_i-\delta_{n+i})$.  Then $\ol{\Delta_{\ol{1}}^2}$ consists of $\pm\delta_i$, for all $i$, while $\ol{\Delta_{\ol{0}}^2}$ consists of $\pm\delta_i\pm\delta_j$.
		\item $(\o\s\p(2m|2n),\g\l(m|n))$: the component $V_2$ of the root system spanned by $\delta_1,\dots,\delta_n$ is $\theta$-stable and entirely contained in $\a^*$.  Here $\ol{\Delta_{\ol{1}}^1}$ consists of $\pm\delta_1,\dots,\pm\delta_n$, while $\ol{\Delta_{\ol{0}}^2}$ consists of $\pm\delta_i\pm\delta_j$.
		\item $(\o\s\p(2m|2n),\o\s\p(m|2s)\times\o\s\p(m|2n-2s))$: the component $V_1$ spanned by $\epsilon_1,\dots,\epsilon_m$ is $\theta$-stable and entirely contained in $\a^*$.  Here $\ol{\Delta_{\ol{1}}^1}$ consists of $\pm\epsilon_1,\dots,\pm\epsilon_m$, while $\ol{\Delta_{\ol{0}}^1}$ consists of $\pm\epsilon_i\pm\epsilon_j$ for $i\neq j$.
		\item $(\o\s\p(m|4n),\o\s\p(r|2n)\times\o\s\p(m-r|2n))$: the component $V_2$ spanned by $\delta_1,\dots,\delta_{2n}$ is $\theta$-stable and intersects $\a^*$ with basis $\ol{\delta_1},\dots,\ol{\delta_n}$, where $\ol{\delta_i}:=\frac{1}{2}(\delta_i-\delta_{n+i})$.  Then $\ol{\Delta_{\ol{1}}^2}$ consists of $\pm\ol{\delta_1},\dots,\pm\ol{\delta_{n}}$, while $\ol{\Delta_{\ol{0}}^2}$ consists of $\pm\delta_i\pm\delta_j$.
		\item $(\mathfrak{d}(1,2;\alpha),\o\s\p(2|2)\times\s\o(2))$: for this case the component $V_1$ spanned by $\epsilon$ is $\theta$-stable and entirely contained in $\a^*$.  We have $\ol{\Delta_{\ol{1}}^1}=\{\pm\epsilon\}$ while $\ol{\Delta_{\ol{0}}^1}=\{\pm2\epsilon\}$.
		\item $(\a\b(1|3),\g\o\s\p(2|4))$ and $(\a\b(1|3),\s\l(1|4))$: for these two cases, the component $V_2$ spanned by $\delta$ is $\theta$-stable and entirely contained within $\a^*$.  We have $\ol{\Delta_{\ol{1}}^2}=\{\pm\frac{1}{2}\delta\}$ while $\ol{\Delta_{\ol{0}}^2}=\{\pm\delta\}$.
		\item $(\a\b(1|3),\mathfrak{d}(1,2;2))$: here the component $V_1$ spanned by $\epsilon_1,\epsilon_2,\epsilon_3$ is $\theta$-stable and entirely contained in $\a^*$.  We have $\ol{\Delta_{\ol{1}}^1}$ consists of $\frac{1}{2}(\pm\epsilon_1\pm\epsilon_2\pm\epsilon_3)$, and $\ol{\Delta_{\ol{0}}^1}$ consists of $\pm\epsilon_i\pm\epsilon_j$ for $i\neq j$.
	\end{itemize}
\end{proof}

\section{The Ghost Centre on $G/K$}

\subsection{General product structures} We now explain product structures relating the spaces $\ZZ_{G/K},\ZZ_{G/K'},\AA_{G/K},$ and $\AA_{G/K'}$.  As noted in \cite{sherman2021ghost}, we have natural, well-defined multiplication maps
\[
\AA_{G/K}\otimes\ZZ_{G/K}\to\AA_{G/K}, \ \ \ \ \ZZ_{G/K'}\otimes\AA_{G/K}\to\AA_{G/K}.
\]
Further, multiplication induces well-defined maps
\[
\AA_{G/K}\otimes\AA_{G/K'}\to \ZZ_{G/K'}, \ \ \ \ \AA_{G/K'}\otimes\AA_{G/K}\to\ZZ_{G/K}.
\]
If we further suppose that $(\g,\k)$ admits an Iwasawa decomposition $\g=\k\oplus\a\oplus\n$, then we have a commutative diagram
\[
\xymatrix{\AA_{G/K}\otimes\ZZ_{G/K}\ar[rr]\ar[d]^{HC\otimes HC} & & \AA_{G/K}\ar[d]^{HC} \\ S(\a)\otimes S(\a) \ar[rr]^{m} & & S(\a)}
\]
where $m$ denotes multiplication on $S(\a)$.

\subsection{Further structure for interlaced pairs}  Let us now assume that $(\g,\k)$ satisfies the hypotheses of Theorem \ref{thm_interlacing_pairs}, i.e.~ it is interlaced.  If we choose an Iwasawa decomposition $\g=\k\oplus\a\oplus\n$ for $\g$, we obtain a decomposition
\[
\UU\g=S(\a)+(\n\UU\g+\k\UU\g).
\]
Let $\phi$ be an interlacing automorphism from $(\g,\k)$ to $(\g,\k')$ with respect to $\a$.  First of all, $\phi$ induces isomorphisms on distributions:
\[
\phi:\UU\g/\UU\g\k\xto{\sim}\UU\g/\UU\g\k', \ \ \ \ \phi:\UU\g/\UU\g\k'\xto{\sim}\UU\g/\UU\g\k
\]
which twist the action of $\g$.    This isomorphism in turn induces isomorphisms
\[
\ZZ_{G/K}\xto{\sim}\ZZ_{G/K'}, \ \ \ \ \AA_{G/K}\xto{\sim}\AA_{G/K'}.
\]
Now $\phi^2=\delta$, and thus since $\ZZ_{G/K}$ and $\AA_{G/K}$ are purely even super vector spaces we have that $\phi^2$ restricts to the identity on $\ZZ_{G/K}$, and $\AA_{G/K}$; thus the compositions
\[
\ZZ_{G/K}\xto{\phi}\ZZ_{G/K'}\xto{\phi}\ZZ_{G/K}, \ \ \ \AA_{G/K}\xto{\phi}\AA_{G/K'}\xto{\phi}\AA_{G/K},
\]
are the identity; in particular we have equalities $\phi=\phi^{-1}$ on these spaces. 

As noted previously, $\phi$ takes $\k\oplus\a\oplus\n$ to $\k'\oplus\a\oplus\n$.  From this we obtain a commutative diagram
\[
\xymatrix{\UU\g/\UU\g\k \ar[dr]_{HC} \ar[r]^{\phi} & \UU\g/\UU\g\k'\ar[d]^{HC} \\ & S(\a)}
\]

\subsection{The ghost centre $\widetilde{\ZZ_{G/K}}$}
\begin{lemma}
	We have $\AA_{G/K}\cap\ZZ_{G/K}=0$.
\end{lemma}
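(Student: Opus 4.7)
The plan is to show that any $\gamma \in \AA_{G/K} \cap \ZZ_{G/K}$ has $HC(\gamma) = 0$, and then invoke the injectivity of $HC$ on $\AA_{G/K}$ (Theorem \ref{thm_injectivity}) to conclude that $\gamma = 0$.

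The first step is to establish that $\gamma$ is $\g$-invariant. Being in $\ZZ_{G/K}$ makes $\gamma$ invariant under $\k = \k_{\ol{0}} \oplus \k_{\ol{1}}$, and being in $\AA_{G/K}$ makes it invariant under $\k' = \k_{\ol{0}} \oplus \p_{\ol{1}}$. Together, $\gamma$ is annihilated by $\k + \k' = \k_{\ol{0}} \oplus \g_{\ol{1}}$, which is not yet all of $\g$ (it misses $\p_{\ol{0}}$). However, for each of the basic classical Lie superalgebras $\g\l(m|n)$, $\o\s\p(m|2n)$, $\mathfrak{d}(1,2;\alpha)$, $\g(1,2)$, $\a\b(1,3)$ listed in the setup, a direct case-by-case check of the bracket gives $[\g_{\ol{1}}, \g_{\ol{1}}] = \g_{\ol{0}}$. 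Since the action satisfies $[x,y]\cdot\gamma = x\cdot(y\cdot\gamma) - (-1)^{|x||y|}y\cdot(x\cdot\gamma)$, the $\g_{\ol{1}}$-annihilation of $\gamma$ propagates to $\g_{\ol{0}}$-annihilation, and we conclude that $\gamma$ is $\g$-invariant.

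The second step exploits the Iwasawa decomposition. Lift $\gamma$ to $u \in \UU\g$ and use the direct sum $\UU\g = S(\a) \oplus (\n\UU\g + \UU\g\k)$ to write $u = p + v$ with $p \in S(\a)$ and $v \in \n\UU\g + \UU\g\k$, so that $HC(\gamma) = \sigma_\a(p)$. For any $a \in \a$, $\g$-invariance gives $au \in \UU\g\k$. The relation $[\a, \n] \subseteq \n$ implies $a\n \subseteq \n\UU\g$, and since $\UU\g\k$ is a left $\UU\g$-submodule, $av \in \n\UU\g + \UU\g\k$; hence $au = ap + av$ decomposes as an element of the Iwasawa direct sum with $S(\a)$-component $ap$. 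Since $au \in \UU\g\k \subseteq \n\UU\g + \UU\g\k$, this forces $ap = 0$ in $S(\a)$. Choosing any nonzero $a \in \a$ and using that $S(\a)$ is an integral domain, $p = 0$, hence $HC(\gamma) = 0$.

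The subtle point I would flag is the first step: $\k + \k'$ genuinely fails to exhaust $\g$, so the $\g$-invariance is not tautological and relies on $[\g_{\ol{1}}, \g_{\ol{1}}] = \g_{\ol{0}}$, which is specific to the basic classical Lie superalgebras under consideration. Once $\g$-invariance is in hand, applying $\a \subset \g$ and using the direct sum character of the Iwasawa decomposition is essentially automatic.
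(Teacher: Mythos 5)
Your overall strategy is sound and genuinely different from the paper's, but Step 1 rests on a false claim. It is \emph{not} true that $[\g_{\ol{1}},\g_{\ol{1}}]=\g_{\ol{0}}$ for all the basic classical Lie superalgebras in scope: for $\g=\g\l(m|n)$ one computes that $[\g_{\ol{1}},\g_{\ol{1}}]=\ker(\operatorname{str})\cap\g_{\ol{0}}$, a hyperplane in $\g_{\ol{0}}$ that misses the centre $\Bbbk I$ whenever $m\neq n$ (since $\operatorname{str}(I)=m-n$). The correct conclusion of Step 1 is only that $\gamma$ is killed by the Lie subalgebra generated by $\k\cup\k'$, namely $\bigl(\k_{\ol{0}}+[\g_{\ol{1}},\g_{\ol{1}}]\bigr)\oplus\g_{\ol{1}}$. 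For the pairs $(\g\l(m|n),\g\l(r|s)\times\g\l(m-r|n-s))$ this is all of $\g$ because $\theta$ is inner, so $I\in\k_{\ol{0}}$. But for $(\g\l(m|2n),\o\s\p(m|2n))$ with $m\neq 2n$, the involution sends $I\mapsto -I$, so $I\in\p_{\ol{0}}$ and in fact $I$ lies in every Cartan subspace $\a$; moreover $\k_{\ol{0}}=\o\s\p(m|2n)_{\ol{0}}\subseteq\ker(\operatorname{str})$, so $\k_{\ol{0}}+[\g_{\ol{1}},\g_{\ol{1}}]\subsetneq\g_{\ol{0}}$, and your argument does not show $I\cdot\gamma=0$. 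So the asserted $\g$-invariance (indeed even $\a$-invariance) does not follow.

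The gap is repairable, because Step 2 does not actually need $\gamma$ to be killed by all of $\a$ — it suffices to produce a single nonzero $a\in\a$ with $a\cdot\gamma=0$, and such an $a$ always exists in $\a\cap[\g_{\ol{1}},\g_{\ol{1}}]$: in the only problematic case $[\g_{\ol{1}},\g_{\ol{1}}]$ has codimension $1$ in $\g_{\ol{0}}$ while $\dim\a=m+n\geq 2$. With that patch the argument goes through, conditional on the Iwasawa hypothesis needed to define $HC$ and invoke \cref{thm_injectivity}. By contrast, the paper's own proof avoids both issues: it extracts a nonzero $p\in[\k,\k']\cap\p_{\ol{0}}$ directly (this intersection is manifestly nonzero because $[\k_{\ol{1}},\p_{\ol{1}}]\subseteq\p_{\ol{0}}$ is nonzero) and then observes that left multiplication by $p$ on $\UU\g/\UU\g\k$ is injective — visible on the associated graded $S(\p)$, where $p$ is a nonzerodivisor — so it needs neither an Iwasawa decomposition nor the injectivity of $HC$.
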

\begin{proof}
	For all the pairs we consider, $[\k,\k']\cap\p_{\ol{0}}\neq0$.  Thus if $u\in\AA_{G/K}\cap\ZZ_{G/K}$ it would be invariant under some nonzero $p\in\p_{\ol{0}}$; however $p$ admits no invariants on $\Dist(G/K,eK)$.
\end{proof}

\begin{prop}\label{ghost algebra prop}
	If $(\g,\k)$ is interlacing, then $\widetilde{\ZZ_{G/K}}:=\ZZ_{G/K}\oplus\AA_{G/K}$ admits the natural structure of a commutative algebra such that $\ZZ_{G/K}$ is a subalgebra,
	\[
	\ZZ_{G/K}\AA_{G/K}=\AA_{G/K}\ZZ_{G/K}\sub\AA_{G/K}, \ \text{ and } \ \AA_{G/K}\AA_{G/K}\sub\ZZ_{G/K}.
	\]
	With this algebra structure, $HC:\widetilde{\ZZ_{G/K}}\to S(\a)$ becomes a homomorphism of algebras.  In particular, 
	\[
	HC(\AA_{G/K})HC(\AA_{G/K})\sub HC(\ZZ_{G/K}).
	\]
\end{prop}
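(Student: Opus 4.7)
The preceding lemma provides $\ZZ_{G/K}\cap\AA_{G/K}=0$, so the direct sum $\widetilde{\ZZ_{G/K}}=\ZZ_{G/K}\oplus\AA_{G/K}$ is a well-defined subspace of $\Dist(G/K,eK)$. My plan is to assemble the algebra structure block-wise from the four product maps recalled in Section 5.1, using the interlacing automorphism $\phi$ to bridge the $(\g,\k)$ and $(\g,\k')$ sides, and then verify associativity, commutativity, and multiplicativity of $HC$ in turn.

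For the multiplication, write $u=u_{\ZZ}+u_{\AA}$ and $v=v_{\ZZ}+v_{\AA}$, and set
\[
uv := u_{\ZZ}v_{\ZZ}+u_{\AA}v_{\ZZ}+\phi(u_{\ZZ})v_{\AA}+\phi(u_{\AA})v_{\AA},
\]
where the terms are defined using, respectively, the standard algebra structure on $\ZZ_{G/K}$, the map $\AA_{G/K}\otimes\ZZ_{G/K}\to\AA_{G/K}$, the map $\ZZ_{G/K'}\otimes\AA_{G/K}\to\AA_{G/K}$ (via $\phi:\ZZ_{G/K}\xto{\sim}\ZZ_{G/K'}$), and the map $\AA_{G/K'}\otimes\AA_{G/K}\to\ZZ_{G/K}$ (via $\phi:\AA_{G/K}\xto{\sim}\AA_{G/K'}$). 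By construction each summand lies in the claimed subspace, so the inclusions $\ZZ\cdot\AA,\,\AA\cdot\ZZ\sub\AA$ and $\AA\cdot\AA\sub\ZZ$ come for free. Associativity reduces to associativity in $\UU\g$ after unwinding the definitions: every product descends from $\UU\g$-multiplication modulo $\UU\g\k$ or $\UU\g\k'$ with $\phi$ applied in the off-diagonal blocks, and since $\phi$ is an algebra automorphism of $\UU\g$ with $\phi^2=\delta$ acting as the identity on the purely even spaces $\ZZ_{G/K},\AA_{G/K}$, the various $\phi$'s cancel correctly. Commutativity of $\ZZ_{G/K}$ follows from commutativity of $S(\a)$ together with the injectivity noted in Remark 3.2. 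For the $\AA\cdot\AA$ product I apply $\phi$ to both sides and invoke the analogous interlacing for $(\g,\k')$ (the inverse $\phi^{-1}$, which equals $\phi$ on even elements) to trade sides, giving $\phi(u_{\AA})v_{\AA}=\phi(v_{\AA})u_{\AA}$ in $\ZZ_{G/K}$.

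For the multiplicativity of $HC$, the Iwasawa decomposition $\UU\g=S(\a)\oplus(\n\UU\g+\UU\g\k)$ lets one check directly that the projection to $S(\a)$ is multiplicative on lifts. Indeed, writing $\tilde u=\pi_{S(\a)}(\tilde u)+n_u+k_u$ and similarly for $\tilde v$, the $k_u$-terms land in $\UU\g\k$ (a left ideal), the $n_u$-terms in $\n\UU\g$ (since $\a$ normalizes $\n$, so $S(\a)\cdot\n\UU\g\sub\n\UU\g$), and only $\pi_{S(\a)}(\tilde u)\pi_{S(\a)}(\tilde v)$ survives; postcomposing with the antipode $\sigma_{\a}$ yields $HC(uv)=HC(u)HC(v)$ on the diagonal blocks. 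The off-diagonal blocks reduce to this after invoking the commutative triangle $HC\circ\phi=HC$ from Section 5.2, e.g.\ $HC(u_{\ZZ}v_{\AA})=HC(\phi(u_{\ZZ})v_{\AA})=HC(\phi(u_{\ZZ}))HC(v_{\AA})=HC(u_{\ZZ})HC(v_{\AA})$. The final containment $HC(\AA_{G/K})HC(\AA_{G/K})\sub HC(\ZZ_{G/K})$ is then immediate from $\AA\cdot\AA\sub\ZZ$.

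The main obstacle will be commutativity of the $\AA\cdot\AA$ product, which is not an immediate consequence of $\UU\g$-commutativity (which fails in general). Here the full interlacing data $\phi\theta=\delta\theta\phi$ together with $\phi^2=\delta$ is essential: it is precisely what is needed to transport products cleanly between $(\g,\k)$ and $(\g,\k')$ without ordering or sign obstructions on the purely even spaces $\ZZ_{G/K}$ and $\AA_{G/K}$. Pinning down this symmetry cleanly, rather than merely verifying it component-wise, is the technical heart of the proof.
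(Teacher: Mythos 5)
Your multiplication formula and overall strategy are the same as the paper's: decompose the product block-wise, insert $\phi$ on the left factor whenever the right factor lies in $\AA_{G/K}$, use $\phi^2=\delta=\id$ on purely even spaces to get $\phi=\phi^{-1}$ and hence associativity, and control commutativity via injectivity of $HC$. However, your argument for commutativity of the $\AA\cdot\AA$ block does not close. Applying $\phi$ to $\phi(u_{\AA})v_{\AA}$ gives $\phi^2(u_{\AA})\phi(v_{\AA})=u_{\AA}\phi(v_{\AA})$, and applying $\phi$ to the conjectured equal quantity $\phi(v_{\AA})u_{\AA}$ gives $v_{\AA}\phi(u_{\AA})$; so ``trading sides'' via $\phi$ merely reproduces the original question with $u$ and $v$ swapped, and does not produce the identity. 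The paper's actual argument here is exactly the $HC$-injectivity one you used for the $\ZZ$-block, extended uniformly: once $HC$ is multiplicative on the products appearing, $HC(\phi(u_{\AA})v_{\AA})=HC(\phi(u_{\AA}))HC(v_{\AA})=HC(u_{\AA})HC(v_{\AA})$ is symmetric in $u_{\AA},v_{\AA}$ (using $HC\circ\phi=HC$), and injectivity of $HC$ on $\ZZ_{G/K}$ forces $\phi(u_{\AA})v_{\AA}=\phi(v_{\AA})u_{\AA}$; similarly $HC(u_{\AA}z)=HC(u_{\AA})HC(z)=HC(\phi(z)u_{\AA})$ handles the mixed block. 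You should replace the ``trade sides'' argument by this.

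A second, smaller, point: in the multiplicativity check you dispose of the term $k_u\tilde v$ by saying ``the $k_u$-terms land in $\UU\g\k$ (a left ideal).'' The left-ideal property handles $\pi_u k_v$ and $n_u k_v$ (where $\UU\g\k$ is multiplied on the \emph{left} by something), but $k_u\tilde v$ has $k_u\in\UU\g\k$ on the \emph{left}, and $\UU\g\k\cdot\UU\g$ is \emph{not} contained in $\UU\g\k$. To push $k_u=wy$ (with $y\in\k$, resp.\ $y\in\k'$ in the off-diagonal block) past $\tilde v$, you must use the $\k$- (resp.\ $\k'$-) invariance of $\tilde v$ modulo $\UU\g\k$: $y\tilde v=\tilde v y+[y,\tilde v]$ with $[y,\tilde v]\in\UU\g\k$, and then argue that $w\tilde v y$ contributes nothing to the $S(\a)$-component. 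This is precisely the role played by the invariance conditions built into $\ZZ_{G/K}$ and $\AA_{G/K}$, and it is what makes the product maps of Section 5.1 well defined; ideal-membership alone does not suffice.
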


\begin{proof}
	To simply notation, for an element in $u+\UU\g\k\in\UU\g/\UU\g\k$ we will simply write $u$; we will write $u\cdot v$ for the product in the algebra $\widetilde{\ZZ_{G/K}}$, and $uv$ for the product of $u$ and $v$ thought of as elements in $\UU\g$ (assuming it is well-defined).  We will also write $\phi$ for an interlacing automorphism of $(\g,\k)$.
	
	We define the algebra structure as follows: given $z\in\ZZ_{G/K}$,  $u_{1},u_2\in\AA_{G/K}$, we set:
	\[
	u_1\cdot z=u_1z, \ \ \ z\cdot u_1=\phi(z)u_1, \ \ \ u_1\cdot u_2=\phi(u_1)u_2.
	\]
	Checking associativity is straightforward, using that $\phi=\phi^{-1}$ on these spaces.  To prove commutativity, we use injectivity of $HC$ from Theorem \ref{thm_injectivity} and Remark \ref{rmk injectivity alldridge} restricted to $\ZZ_{G/K}$ and $\AA_{G/K}$ separately; indeed, $HC(\phi(u_1)u_2)=HC(\phi(u_2)u_1)$, and $HC(u_1z)=HC(\phi(z)u_1)$.
\end{proof}

\begin{definition}
	We define $\widetilde{\ZZ_{G/K}}$ to be the ghost center of $G/K$.  We write $\widetilde{\ZZ_{(\g,\k)}}=\ZZ_{(\g,\k)}+\AA_{(\g,\k)}$ for the ghost centre of $(\g,\k)$.
\end{definition}
Clearly $\widetilde{\ZZ_{G/K}}$ is the subalgebra of $\widetilde{\ZZ_{(\g,\k)}}$ given by the fixed points of $K_0$.

\begin{remark}
	Observe that although our definition of the product structure on $\widetilde{\ZZ_{G/K}}$ a priori depends on a choice of interlacing automorphism $\phi$, its definition is independent of $\phi$ due to the injectivity of $HC$ on $\ZZ_{G/K}$ and $\AA_{G/K}$.
\end{remark}

\section{Lifting $\widetilde{\ZZ_{G/K}}$ to Operators}

In this section we explain how to lift $\widetilde{\ZZ_{G/K}}$ to an algebra of operators on $\Bbbk[G/K]$.  In order to do this, we will need  to make the additional \textbf{assumption} that:
\[
(\g,\k) \ \text{ is interlaced with interlacing automorphism }\phi=\Ad(t) \text{ satisfying }t^2\in K(\Bbbk). \ \ (*)
\]  
\begin{remark}
	We note that the above condition is not so restrictive, and holds in most cases when the pair is interlacing.  We always have that $\theta t^2=t^2$, so if we set $K=G^\theta$ then it satisfies this condition.  However it is possible in some cases that $(G^\theta)^\circ$ does not contain $t^2$.
\end{remark}

The process to lift $z\in\ZZ_{G/K}$ to an operator on $G/K$ is well known, and we recall it now.  We write $a^*:\Bbbk[G]\to \Bbbk[G]\otimes \Bbbk[G]$ for the coproduct map on $G$, and note that $\Bbbk[G/K]=\Bbbk[G]^K$.  Then we lift $z$ to the operator $\tilde{z}$ via
\[
\tilde{z}=(1\otimes z)\circ a^*.
\]
The operator $\tilde{z}$ defines a $G$-equivariant map $\Bbbk[G/K]\to \Bbbk[G/K]$; it further defines a differential operator on $G/K$.  In this way we obtain an isomorphism of algebras
\[
\ZZ_{G/K}\to D^{G}(G/K),  \ \ \ \ z\mapsto\tilde{z},
\]
where $D^{G}(G/K)$ denotes the algebra of $G$-equivariant differential operators on $G/K$.  

\subsection{Lifting $\AA_{G/K}$ to operators}\label{sec lifting ops} Let $u\in\AA_{G/K}$; then in a similar fashion to above, we may consider the operator
\[
u':=(1\otimes u)\circ a^*.
\]
However, by the invariance properties of $u$, this defines an operator $u':\Bbbk[G/K]\to \Bbbk[G/K']$.  To `fix' this, we use an interlacing automorphism $\phi$.  Recall that $\phi$ is inner, with $\phi=\Ad(t)$ for $t\in A(\Bbbk)$; further $t$ satisfies that $t^2\in K(\Bbbk)$ by $(*)$.  If we write $R_t$ for the action on $G$ by right multiplication by $t$, then  it defines a $G$-equivariant isomorphism $R_t^*:\Bbbk[G/K']\to \Bbbk[G/K]$.  Further, since $t^2\in K(\Bbbk)$, we have that the composition
\[
\Bbbk[G/K]\xto{R_t^*}\Bbbk[G/K']\xto{R_t^*}\Bbbk[G/K]
\]
is the identity map.  Now we define 
\[
\tilde{u}:=R_t^*\circ u'=R_t^*\circ(1\otimes u)\circ a^*.
\]
Then $\tilde{u}$ defines a $G$-equivariant endomorphism of $\Bbbk[G/K]$.  

\begin{remark}[Caution]  The operator $\tilde{u}$ will \emph{not} be a differential operator on $G/K$; indeed, if it did then $\operatorname{res}_{eK}\circ\tilde{u}$ would live in $\Dist(G/K,eK)$.  However this element instead lives in $\Dist(G/K,aK)$.
\end{remark}

\begin{thm}\label{thm lifting to ops}
The maps $z\mapsto\tilde{z}$, $u\mapsto\tilde{u}$ define an injective morphism of algebras
\[
\widetilde{\ZZ_{G/K}}\to\End_{G}(\Bbbk[G/K]).
\]
Further, for $\lambda\in\Lambda^+$ we have $\frac{\tilde{u}(f_{\lambda})}{f_{\lambda}}=(e^\lambda(t))HC(u)(-\lambda)=\pm HC(u)(-\lambda)$.
\end{thm}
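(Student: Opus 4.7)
The plan is to verify well-definedness of $\tilde{u}$, establish the eigenvalue formula which identifies the scalar by which $\tilde{u}$ acts on each highest-weight submodule $V(\lambda)$, and then deduce the algebra homomorphism property and injectivity as formal consequences. For $u\in\AA_{G/K}$, the $K'$-invariance of $u$ guarantees that $u':=(1\otimes u)\circ a^*$ carries $\Bbbk[G/K]$ into $\Bbbk[G]^{K'}=\Bbbk[G/K']$. The hypotheses $\Ad(t)\k=\k'$, $K_0=K'_0$, and $t^2\in K$ combine to give $t^{-1}Kt=K'$, so $R_t^*$ descends to a $G$-equivariant isomorphism $\Bbbk[G/K']\to\Bbbk[G/K]$, and $\tilde{u}=R_t^*\circ u'$ lies in $\End_G(\Bbbk[G/K])$. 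The assignment $z\mapsto\tilde{z}$ is classical and produces the standard isomorphism from $\ZZ_{G/K}$ to $G$-equivariant differential operators on $G/K$.

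The eigenvalue formula is the technical heart. Since $\tilde{u}$ is $G$-equivariant and the $\b$-weight space of weight $\lambda$ in $\Bbbk(G/K)^{(\b)}$ is one-dimensional, $\tilde{u}(f_\lambda)=c_\lambda f_\lambda$ for some scalar $c_\lambda$. Unwinding the definitions gives $c_\lambda=(R_t^*u'(f_\lambda))(eK)=u(L_t^*f_\lambda)$, where $L_t$ denotes left translation by $t$ on $G/K$. Now $f_\lambda$ satisfies $L_b^*f_\lambda=e^\lambda(b)f_\lambda$ for $b\in B$ (exponentiating the infinitesimal property $X\cdot f_\lambda=\lambda(X)f_\lambda$ for $X\in\b$), so because $t\in A\sub B$ we obtain $c_\lambda=e^\lambda(t)\cdot HC(u)(-\lambda)$. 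To see $e^\lambda(t)=\pm 1$: on the one hand $f_\lambda(t^2 K)=L_{t^2}^*f_\lambda(eK)=e^\lambda(t)^2$, while on the other $t^2\in K$ forces $t^2 K=eK$ and thus $f_\lambda(t^2 K)=1$; therefore $e^\lambda(t)^2=1$.

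With the eigenvalue formula in place, multiplicativity becomes routine. By \cref{cor generic typicality}, $V(\lambda)$ is irreducible for $\lambda$ in a Zariski-dense subset $U\sub\Lambda^+$, so Schur's lemma and $G$-equivariance yield $\tilde{u}\mid_{V(\lambda)}=c_\lambda\cdot\id$ for such $\lambda$. A case-by-case comparison on pure terms $z\cdot z'$, $u\cdot z$, $z\cdot u$, and $u_1\cdot u_2$, in combination with multiplicativity of $HC$ on $\widetilde{\ZZ_{G/K}}$ from \cref{ghost algebra prop} and the identity $e^\lambda(t)^2=1$, shows $\tilde{u}_1\tilde{u}_2$ and $\widetilde{u_1\cdot u_2}$ act by the same scalar on each $V(\lambda)$ for $\lambda\in U$; Zariski-density then extends this to equality of operators on $\Bbbk[G/K]$. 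For injectivity, suppose $\tilde{u}=0$ and decompose $u=z+a$. Evaluating at $x\in G$ yields $z(L_x^*f)+a(L_{xt}^*f)=0$ for all $f\in\Bbbk[G/K]$; the two terms define distributions on $G/K$ supported respectively at the cosets $x^{-1}K$ and $t^{-1}x^{-1}K$, which are distinct whenever $xtx^{-1}\notin K$, for instance at $x=e$ using $t\notin K$ in the non-degenerate case $\k\neq\k'$. Disjointness of supports then forces $z=a=0$.

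The principal obstacle is the sign calculation: producing $e^\lambda(t)^2=1$ requires combining the left-$B$-eigenvector property with the right-$K$-invariance of $f_\lambda$ and exploiting $t^2\in K$. The injectivity argument also needs care, since $HC$ is not known to be injective on the full ghost centre $\widetilde{\ZZ_{G/K}}$ (only on each summand separately), and the support argument bypasses this cleanly.
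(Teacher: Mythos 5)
Your eigenvalue computation $\tilde{u}(f_\lambda) = e^\lambda(t)\,HC(u)(-\lambda)\,f_\lambda$ follows the paper's route exactly, and your derivation of $e^\lambda(t)=\pm1$ — evaluating $f_\lambda(t^2K)=e^\lambda(t)^2$ and using $t^2\in K(\Bbbk)$ to force $t^2K=eK$ — is a clean justification where the paper only says ``one can check.'' The injectivity argument via disjoint supports is essentially the paper's observation that $\tilde z$ is a differential operator while $\tilde u$ is not (for $u\in\AA_{G/K}$ nonzero), merely phrased in terms of the support of the associated distribution at $eK$; both are fine.

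The genuine gap is in the multiplicativity step. You establish that $\tilde{u}_1\tilde{u}_2$ and $\widetilde{u_1\cdot u_2}$ act by the same scalar on $V(\lambda)$ for $\lambda$ in a Zariski-dense $U\subseteq\Lambda^+$, and then invoke ``Zariski-density then extends this to equality of operators on $\Bbbk[G/K]$.'' But density of $U$ in $\a^*$ does not by itself imply equality of operators: a $G$-equivariant endomorphism of $\Bbbk[G/K]$ that kills $f_\lambda$ for all $\lambda\in U$ need not vanish, because $\sum_{\lambda\in U} V(\lambda)$ need not exhaust $\Bbbk[G/K]$ — the category of rational $G$-modules is not semisimple for quasireductive $G$, and nothing cited shows that the highest weight submodules span. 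To make the density argument rigorous you would need to show that $D:=\tilde{u}_1\tilde{u}_2-\widetilde{u_1\cdot u_2}$ is determined by the distribution $\operatorname{res}_{eK}\circ D$ (or its translate to $tK$), and that this distribution lies in $\ZZ_{G/K}$ or in a translate of $\AA_{G/K}$ where $HC$ is known to be injective. But to verify that $\operatorname{res}_{eK}\circ\tilde{u}_1\tilde{u}_2$ is such a distribution you end up performing exactly the operator manipulation $R_t^*(1\otimes u_1)a^*R_t^*(1\otimes u_2)a^* = (1\otimes\phi(u_1)u_2)a^*$ that the paper uses directly — and once you have that identity, multiplicativity is immediate and the density argument is superfluous. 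So the Zariski-density route either has a hole or circles back to the direct computation; the paper's approach, which manipulates the operator compositions outright using coassociativity of $a^*$ and the identity $R_{t^2}^*=\id$ on $\Bbbk[G]^K$, is the correct way to close this.
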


\begin{proof}
 	Let $z\in\ZZ_{G/K}$ and $u\in\AA_{G/K}$.  Then we have
 	\begin{eqnarray*}
 	\tilde{u}\tilde{z}& = &R_t^*\circ(1\otimes u)\circ a^*\circ(1\otimes z)\circ a^*\\
 	                  & = &R_t^*\circ(1\otimes (u\circ((1\otimes z)\circ a^*)))\circ a^*\\
 	                  & = &R_t^*\circ(1\otimes uz)\circ a^*\\
 	                  & = &\widetilde{uz};
 	\end{eqnarray*}
	\begin{eqnarray*}
	\tilde{z}\tilde{u}& = &(1\otimes z)\circ a^*\circ R_t^*\circ(1\otimes u)\circ a^*\\
	                  & = &R_t^*\circ(1\otimes(\phi(z)\circ((1\otimes u)\circ a^*)))\circ a^*\\
	                  & = &R_t^*\circ(1\otimes\phi(z)u)\circ a^*\\
	                  & = &\widetilde{\phi(z)u}.
	\end{eqnarray*}
Finally, if $u_1,u_2\in\AA_{G/K}$, then
	\begin{eqnarray*}
	\tilde{u_1}\tilde{u_2}& = &R_t^*\circ(1\otimes u_1)\circ a^*\circ R_t^*\circ(1\otimes u_2)\circ a^*\\
	                      & = &R_{t^2}^*\circ (1\otimes(\phi(u_1)\circ((1\otimes u_2)\circ a^*)))\circ a^*\\
	                      & = &(1\otimes\phi(u_1)u_2)\circ a^*\\
	                      & = &\widetilde{\phi(u_1)u_2}.
	\end{eqnarray*}
In the above we used that $t^2\in K(\Bbbk)$ so $R_{t^2}^*$ acts by the identity on $\Bbbk[G]^{K}$. 

To determine the action on $f_{\lambda}$ for $\lambda\in\Lambda^+$, we use that since $\tilde{u}$ is $G$-equivariant, $\tilde{u}f_{\lambda}$ must be a multiple of $f_{\lambda}$; to determine which multiple, we evaluate $\tilde{u}f_{\lambda}$ at $eK$.
\begin{eqnarray*}
	\operatorname{res}_{eK}\circ R_{t}^*\circ (1\otimes u)\circ a^*(f_{\lambda})& = &u\circ L_{t}^*\circ(\operatorname{res}_{eK}\otimes 1)\circ a^*(f_{\lambda})\\
	                                                             & = &u\circ L_t^*(f_{\lambda})\\
	                                                             & = &(e^{\lambda}(t))uf_{\lambda}\\
	                                                             & = &(e^{\lambda}(t))HC(u)(-\lambda) f_{\lambda}.                                                
\end{eqnarray*}
One can check that for all symmetric pairs we consider, $e^{\lambda}(t)=\pm1$ for all $\lambda\in\Lambda$.

Finally, to prove injectivity of our morphism $\widetilde{\ZZ_{G/K}}\to\End_G(\Bbbk[G/K])$, we first of all note that it is injective individually on $\ZZ_{G/K}$ and $\AA_{G/K}$ because $HC$ is injective on each space. Further we cannot have $\tilde{z}=\tilde{u}$ for nonzero $z\in\ZZ_{G/K}$, $u\in\AA_{G/K}$, because $\tilde{z}$ is a differential operator while $\tilde{u}$ is not.  This forces injectivity.
\end{proof}

\section{Special Supersymmetric Pairs}

\begin{definition}
	A supersymmetric pair $(\g,\k)$ is called special if our interlacing automorphism $\phi=\Ad(t)$ may be taken to act by the identity on $\g_{\ol{0}}$.
\end{definition}

The special supersymmetric pairs are those with properties that are close to the pair $(\g\times\g,\g)$.  We obtain only two families of special pairs, namely $(\g\l(m|2n),\o\s\p(m|2n))$ and $(\o\s\p(2|2n),\o\s\p(1|2s)\times\o\s\p(1|2n-2s))$.  Nevertheless these spaces are of interest to understand and their extra structure warrants a further study.

Set $\Aut(\g,\g_{\ol{0}})$ to be those automorphisms of $\g$ which fix $\g_{\ol{0}}$ pointwise.  For the cases $\g=\g\l(m|n),\o\s\p(2|2n)$ we have $\Aut(\g,\g_{\ol{0}})\cong \Bbbk^*$.  For each $c\in \Bbbk^*$, we write $\phi_c$ for the corresponding automorphism of $\Aut(\g,\g_{\ol{0}})$.  Explicitly, if $\g=\g_{-1}\oplus\g_0\oplus\g_1$, then $\phi_c$ corresponds to the automorphism acting by:
\[
\phi_c:=c^{-1}\id_{\g_{-1}}\oplus\id_{\g_{0}}\oplus c\id_{\g_1}.
\]
Now for each $c\in \Bbbk^*$, $(\g,\phi_c(\k))$ will be a supersymmetric pair with involution $\theta_c:=\phi_c\theta\phi_c^{-1}$.  We have the relation $\theta_c=\theta_d$ if and only if $d=-c$, and the same relationship for the subalgebras $\phi_c(\k)$.  However we always have $\phi_c(\k)_{\ol{0}}=\k_{\ol{0}}$.  Write $K_c$ for the subgroup of $G$ with $\operatorname{Lie}K_c=\phi_c(\k)$, and $(K_c)_0=K_0$.

\begin{prop}
	Suppose that $(\g,\k)$ is a special supersymmetric pair.  If $c\neq\pm1$, we have a natural $K_c$-equivariant isomorphism
	\[
	\Dist(G/K,eK)\to\Ind_{\k_{\ol{0}}}^{\phi_c(\k)}\Dist(G_0/K_0,eK_0),
	\]
	thereby inducing an isomorphism of vector spaces
	\[
	\Dist(G_0/K_0,eK_0)^{K_0}\xto{\sim}\Dist(G/K,eK)^{K_c}, \ \ \ z\mapsto v_{\phi_c(\k)}\cdot z.
	\]
\end{prop}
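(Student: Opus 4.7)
The plan is to generalize the argument from \cite{sherman2021ghost}, which handles the case $\phi_c(\k)=\k'$ (this corresponds to $c=i$, since $\phi_i(\k_{\ol{1}})=i\p_{\ol{1}}=\p_{\ol{1}}$). The essential new input is a direct sum decomposition
\[
\g_{\ol{1}} = \k_{\ol{1}} \oplus \phi_c(\k_{\ol{1}}),
\]
playing the role of $\g_{\ol{1}}=\k_{\ol{1}}\oplus\p_{\ol{1}}$ from the $\k'$ case. To verify it, I would use that for a special pair, $\theta$ swaps the $\Z$-grading summands $\g_{-1}$ and $\g_{1}$, so each element of $\k_{\ol{1}}$ takes the form $y+\theta y$ with $y\in\g_1$, and $\phi_c$ sends such an element to $cy+c^{-1}\theta y$. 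An equality $y+\theta y=cz+c^{-1}\theta z$ then forces $c^2=1$, so $\k_{\ol{1}}\cap\phi_c(\k_{\ol{1}})=0$ whenever $c\neq\pm1$, and a dimension count completes the decomposition since $\dim\k_{\ol{1}}=\dim\phi_c(\k_{\ol{1}})=\tfrac{1}{2}\dim\g_{\ol{1}}$.

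Given this, PBW yields a super vector space isomorphism $\UU\g\cong\Lambda(\phi_c(\k_{\ol{1}}))\otimes\UU\g_{\ol{0}}\otimes\Lambda(\k_{\ol{1}})$, ordering monomials with $\k_{\ol{1}}$-factors on the right. Modulo $\UU\g\k=\UU\g\k_{\ol{0}}+\UU\g\k_{\ol{1}}$, monomials with nontrivial $\k_{\ol{1}}$-tail vanish and the middle $\UU\g_{\ol{0}}$-factor descends to $\Dist(G_0/K_0,eK_0)$, giving by dimension count the isomorphism
\[
\Lambda(\phi_c(\k_{\ol{1}}))\otimes\Dist(G_0/K_0,eK_0)\xrightarrow{\sim}\Dist(G/K,eK),\ \ \ a\otimes\bar{b}\mapsto ab+\UU\g\k.
\]
Identifying the left-hand side with $\Ind_{\k_{\ol{0}}}^{\phi_c(\k)}\Dist(G_0/K_0,eK_0)$ via PBW on $\phi_c(\k)$, the $\phi_c(\k)$-equivariance of the multiplication map $u\otimes z\mapsto u\cdot z$ is immediate, and integration over the connected ordinary group $(K_c)_0=K_0$ gives $K_c$-equivariance. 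Inverting this map provides the desired natural isomorphism, characterized as the unique $K_c$-module map induced by the inclusion $\Dist(G_0/K_0,eK_0)\subset\Dist(G/K,eK)$.

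For the induced isomorphism on invariants, I would invoke Frobenius reciprocity: $(\Ind_{\k_{\ol{0}}}^{\phi_c(\k)}V)^{\phi_c(\k)}\cong V^{\k_{\ol{0}}}$, realized by $z\mapsto v_{\phi_c(\k)}\otimes z$, where $v_{\phi_c(\k)}$ generates the one-dimensional space $(\UU\phi_c(\k)/\UU\phi_c(\k)\k_{\ol{0}})^{\phi_c(\k)}$. The one-dimensionality requires $\Lambda^{\dim\phi_c(\k_{\ol{1}})}\phi_c(\k_{\ol{1}})$ to be a trivial $\k_{\ol{0}}$-module; this is immediate from the $\k_{\ol{0}}$-equivariance of $\phi_c$ (which acts trivially on $\g_{\ol{0}}$) combined with the triviality of $\Lambda^{\dim\p_{\ol{1}}}\p_{\ol{1}}$ recorded in the recollections, since $\phi_c$ provides a $\k_{\ol{0}}$-module isomorphism $\k_{\ol{1}}\cong\phi_c(\k_{\ol{1}})$ and similarly $\phi_i$ identifies $\k_{\ol{1}}$ with $\p_{\ol{1}}$. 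Under our isomorphism, $v_{\phi_c(\k)}\otimes z$ then maps to $v_{\phi_c(\k)}\cdot z$, giving the claimed formula. The main substantive step is the decomposition $\g_{\ol{1}}=\k_{\ol{1}}\oplus\phi_c(\k_{\ol{1}})$, which is exactly where the hypotheses that $(\g,\k)$ is special and $c\neq\pm1$ enter; the rest is a routine repackaging of the $\k'$-case machinery.
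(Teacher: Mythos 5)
Your proof is correct and follows essentially the same route as the paper: you correctly identify that the crux is the direct sum decomposition $\g_{\ol{1}}=\k_{\ol{1}}\oplus\phi_c(\k_{\ol{1}})$ for $c\neq\pm1$, and you verify it via the same computation (using that $\theta$ swaps $\g_{\pm1}$, parametrizing $\k_{\ol{1}}$ by one $\Z$-graded piece and comparing components after applying $\phi_c$). The paper simply cites \cite{sherman2021ghost} for the statement that this transversality suffices to yield both the Koszul-induced isomorphism and the formula on invariants, whereas you unpack that machinery with an explicit PBW filtration argument and Frobenius reciprocity; your unpacking is faithful to what \cite{sherman2021ghost} proves, so the only difference is presentational.
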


\begin{proof}
	By \cite{sherman2021ghost}, it suffices to show that $\phi_c(\k)_{\ol{1}}+\k_{\ol{1}}=\g_{\ol{1}}$ for $c\neq\pm1$. For the special pairs, $\theta$ interchanges $\g_{-1}$ and $\g_{1}$, thus we write $\g_{\ol{1}}=\g_{-1}\oplus\g_1$, and we have 
	\[
	\phi_c(\k)=\{(u,\theta_c(u)):u\in\g_{-1}\}=\{(u,c^2\theta(u)):u\in\g_{-1}\}.
	\]
	From this the result follows.
\end{proof}

The above result may be expressed algebraically as saying that we have an isomorphism of $\phi_c(\k)$-modules:
\[
\Ind_{\k_{\ol{0}}}^{\phi_c(\k)}\left(\UU\g_{\ol{0}}/\UU\g_{\ol{0}}\k_{\ol{0}}\right)\cong \UU\g/\UU\g\k
\]
\subsection{The full ghost algebra of $G/K$}  Let us assume for the rest of this section that $(\g,\k)$ is a special supersymmetric pair.
\begin{definition}
	We set $\AA_{G/K,c}:=\Dist(G/K,eK)^{K_c}$, and define the full ghost algebra of $G/K$ to be $\AA_{G/K}^{full}:=\sum\limits_{c\in \Bbbk^\times}\AA_{G/K,c}$.
\end{definition}
Observe that $\AA_{G/K,c}=\AA_{G/K,-c}$.  

We may write $\Delta=\Delta_{-1}\sqcup\Delta_0\sqcup\Delta_1$, where $\Delta_i$ are the roots coming from $\g_i$.  Then $\theta$ defines a bijection $\Delta_{-1}\to\Delta_{1}$.  It is possible in this case to choose for Iwasawa Borel one with positive system satisfying $\Delta_{\ol{1}}^{\pm}=\Delta_{\pm1}$, and we do this.
\begin{prop}
	For all $c\in \Bbbk^\times$, the map $HC:\AA_{G/K,c}\to S(\a)$ is injective.
\end{prop}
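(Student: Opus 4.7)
The plan is first to reduce to the nontrivial values of $c$. For $c=\pm 1$, $\phi_c$ equals the identity or the grading involution $\delta$, both of which fix $\k$ setwise, so $K_c=K$ and $\AA_{G/K,c}=\ZZ_{G/K}$, on which $HC$ is injective by \cref{rmk injectivity alldridge}. For $c=\pm i$ one checks $\phi_c^2=\phi_{-1}=\delta$ and $\phi_c(\k)=\k'$, so $\phi_c$ is an interlacing automorphism and $K_c=K'$; hence $\AA_{G/K,\pm i}=\AA_{G/K}$, and injectivity follows from \cref{thm_injectivity}. It therefore suffices to treat $c\in \Bbbk^\times\setminus\{\pm 1,\pm i\}$.

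\textbf{Setup.} For such $c$, I apply the preceding proposition to write each $D\in\AA_{G/K,c}$ uniquely as $v_{\phi_c(\k)}z$ with $z\in(\UU\g_{\ol{0}}/\UU\g_{\ol{0}}\k_{\ol{0}})^{\k_{\ol{0}}}$, a space on which $HC$ is injective by the classical Harish-Chandra theorem for the reductive symmetric pair $(\g_{\ol 0},\k_{\ol 0})$. In the halved filtration of the proof of \cref{thm_injectivity}, the element $v_{\phi_c(\k)}$ admits a lift whose top-degree part is the PBW product $x_k^{(c)}y_k^{(c)}\cdots x_1^{(c)}y_1^{(c)}$, where
\[
x_i^{(c)}:=c\,e_{\alpha_i}-c^{-1}\theta e_{\alpha_i},\qquad y_i^{(c)}:=c^{-1}e_{-\alpha_i}-c\,\theta e_{-\alpha_i}
\]
are the $\phi_c$-images of the basis vectors $x_i,y_i$ of $\p_{\ol{1}}$ from \cref{thm_injectivity}, with $\alpha_1,\dots,\alpha_k\in\Delta_1$ representing the $(-\theta)$-orbits on the positive odd roots.

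\textbf{Leading-term computation.} The heart of the argument is the analogue of \cref{cor highest order term}. Decompose $x_k^{(c)}=-c^{-1}(e_{\alpha_k}+\theta e_{\alpha_k})+(c+c^{-1})e_{\alpha_k}$: the second summand lies in $\n$ and drops out of $HC$ as the leftmost factor, while the first lies in $\k$ and is super-commuted past $y_k^{(c)}x_{k-1}^{(c)}\cdots y_1^{(c)}z$, its rightmost-position contribution killed by $\UU\g\k$. The special-pair $\mathbb{Z}$-grading $\g=\g_{-1}\oplus\g_0\oplus\g_1$ forces $[e_{\alpha_k},\theta e_{-\alpha_k}]$ and $[\theta e_{\alpha_k},e_{-\alpha_k}]$ to vanish (they lie in $\g_{\pm 2}=0$), removing the $(1-\theta)[\theta e_\alpha,e_{-\alpha}]$ correction of the original proof and yielding the clean identity
\[
[e_{\alpha_k}+\theta e_{\alpha_k},\,y_k^{(c)}]=c^{-1}h_{\alpha_k}-c\,\theta h_{\alpha_k},
\]
whose $\a$-component equals $\tfrac{1}{2}(c+c^{-1})h_{\ol{\alpha_k}}$ and whose $\k$-component (living in $\mathfrak{t}\subset\k$) is killed by $HC$. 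All other Leibniz corrections (commutators of $e_{\alpha_k}+\theta e_{\alpha_k}$ with the remaining $x_i^{(c)},y_i^{(c)}$, or with $z$) produce even elements and hence strictly lower the filtration degree, exactly as in \cref{thm_injectivity}. Iterating over $k,k-1,\dots,1$ gives
\[
HC(v_{\phi_c(\k)}z)=C_c^{\,k}\cdot h_{\ol{\alpha_k}}\cdots h_{\ol{\alpha_1}}HC(z)+(\text{lower order})
\]
with $C_c$ a nonzero scalar proportional to $\tfrac{1}{2}(c+c^{-1})$. Since $c\neq\pm i$, $C_c\neq 0$; thus $HC(D)=0$ forces $HC(z)=0$, whence $z=0$, whence $D=0$.

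\textbf{Main obstacle.} The main difficulty, identical to that of \cref{thm_injectivity}, is the careful bookkeeping verifying that every non-initial Leibniz correction strictly lowers the filtration degree; the $c$-twist only rescales individual factors without mixing parities, so the odd/even counting from \cref{thm_injectivity} transfers verbatim. A pleasing consistency check is that the scalar $\tfrac{1}{2}(c+c^{-1})$ vanishes precisely when $c^2=-1$, i.e., in the cases $c=\pm i$ already handled by \cref{thm_injectivity} via $K_c=K'$.
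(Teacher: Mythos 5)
Your proof is correct, and reaches the same final conclusion, but it takes a genuinely different route from the paper on a key technical point worth flagging.

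The paper's proof expresses the leading term of $v_{\phi_c(\k)}$ using a basis of $\phi_c(\k)_{\ol 1}$ itself, namely $x_i = e_{\alpha_i} + c^2\theta e_{\alpha_i}$ with $\alpha_i\in\Delta_{-1}$. This is the natural choice since $v_{\phi_c(\k)}$ lives in $\UU\phi_c(\k)/\UU\phi_c(\k)\k_{\ol 0}$, and the resulting leading coefficient of $HC(v_{\phi_c(\k)}z)$ is proportional to $(c^2-1)^k$, which is nonzero for \emph{all} $c\neq\pm 1$, so the paper treats every such $c$ uniformly. You instead take the $\phi_c$-images $x_i^{(c)}=\phi_c(x_i)$ of the original $\p_{\ol 1}$-basis from \cref{thm_injectivity}; these span $\phi_c(\k')_{\ol 1}=\phi_{ic}(\k)_{\ol 1}$ rather than $\phi_c(\k)_{\ol 1}$. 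That is a perfectly legitimate complement to $\k_{\ol 1}$ (hence gives a valid leading term of $v_{\phi_c(\k)}$ up to a $c$-dependent nonzero scalar, which you should mention), but the transversality to $\k_{\ol 1}$ fails precisely when $ic=\pm 1$, i.e.~$c=\pm i$. This is exactly why your leading coefficient $\tfrac12(c+c^{-1})$ vanishes there, forcing the extra reduction step peeling off $c=\pm i$ via the observation $\AA_{G/K,\pm i}=\AA_{G/K}$ (which is correct and a nice way to see it, but is not needed in the paper's approach). So: your argument has one more case to dispatch, the paper's has none; in exchange your identification $x_i^{(c)}=\phi_c(x_i)$ makes the analogy to \cref{thm_injectivity} very transparent.

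Two smaller points. First, your assertion that the $\mathfrak t$-component of $c^{-1}h_{\alpha_k}-c\,\theta h_{\alpha_k}$ is ``killed by $HC$'' needs a word of justification: $\mathfrak t$ sitting on the left is \emph{not} immediately killed by $HC$. One must commute $h_{\alpha_k}+\theta h_{\alpha_k}\in\mathfrak t\subset\k_{\ol 0}$ to the right (where it dies in $\UU\g\k$), and argue that the commutators picked up on the way (with each $x_i^{(c)},y_i^{(c)}$, and with $z$, the latter vanishing since $z$ is $\k_{\ol 0}$-invariant) all land in strictly lower filtration degree. This works, but is not automatic. Note the same issue is present in the paper's computation (the coefficient $(c^2+\theta)$ also produces a $\mathfrak t$-part), and the paper elides it under ``almost verbatim''; so you are in good company, but since you call attention to this component explicitly you should close the loop. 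Second, the statement that $v_{\phi_c(\k)}$ ``admits a lift whose top-degree part is'' $\prod x_i^{(c)} y_i^{(c)}$ is true only up to a normalizing scalar; since $v_{\phi_c(\k)}$ is only defined up to scalar this is harmless, but it is worth saying.
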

\begin{proof}
	For $c=\pm1$, this follows from \cite{alldridge2012harish}.  Now suppose that $c\neq\pm1$.  Then $-\theta$ defines an involution on $\Delta_{-1}$ without fixed points; write $\{\alpha_1,\dots,\alpha_k\}\sub\Delta_{-1}$ for representatives. Then a basis for $(\phi_c(\k_c))_{\ol{1}}$ is given by
	\[
	x_i:=e_{\alpha_i}+c^{2}\theta e_{\alpha_i}, \ \ \ \ y_i=e_{-\theta\alpha_i}+c^{2}\theta e_{-\theta\alpha_i}.
	\]
	From here the proof is almost verbatim to the one given in Theorem \ref{thm_injectivity}.  In particular the important observations are that we may write $x_i=(e_{\alpha_i}+\theta e_{\alpha})+(1-c^{2})\theta e_{\alpha}$, and $\theta e_{\alpha}\in\n^+$.  Further,
	\[
	[x_i,y_i]=(1+\theta c^2)[e_{\alpha},e_{-\theta\alpha}]+(c^2+\theta)h_{\alpha},
	\]
	where $h_{\alpha}:=[e_{\alpha},\theta e_{-\theta\alpha}]$.  In the end we will find that for $z\in(\UU\g/\UU\g\k_{\ol{0}})^{K_0}$,
	\[
	HC(zv_{\k_c'})=r\ol{h_{\alpha_1}}\cdots \ol{h_{\alpha_k}}HC(z)+l.o.t.
	\]
	where $r\in \Bbbk^\times$ and $\ol{h_{\alpha_i}}$ is the projection of $h_{\alpha_i}$ to $\a$.
\end{proof}

The following conjecture is based on phenomenon observed in the diagonal case $(\g\times\g,\g)$ in \cite{sherman2021ghost}.
\begin{conj}
	We have $HC(\AA_{G/K,c})=HC(\AA_{G/K})\sub HC(\ZZ_{G/K})$ for all $c\neq\pm1$.
\end{conj}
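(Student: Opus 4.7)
The plan is to combine dimension-matching with a family argument in the parameter $c$. By the isomorphism $\Dist(G_0/K_0,eK_0)^{K_0}\cong\Dist(G/K,eK)^{K_c}$ and the injectivity of $HC$ on $\AA_{G/K,c}$ established earlier in this section, for $c\neq\pm1$ we have $\dim HC(\AA_{G/K,c})=\dim V$ where $V:=(\UU\g_{\ol{0}}/\UU\g_{\ol{0}}\k_{\ol{0}})^{\k_{\ol{0}}}$; in particular this equals $\dim HC(\AA_{G/K})$, which corresponds to the case $c=\pm i$ via $\phi_{\pm i}(\k)=\k'$. Hence it suffices to prove the inclusion $HC(\AA_{G/K,c})\sub HC(\AA_{G/K})$ for $c\neq\pm1$, after which equality is automatic.

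Concretely, write $v_{\phi_c(\k)}=\prod_{\alpha\in\Delta_1}(e_{-\alpha}+c^2\theta e_{-\alpha})$ in some fixed order, which is a polynomial in $c^2$ of degree $N:=|\Delta_1|$. Define $\Phi_c:V\to S(\a)$ by $\Phi_c(z):=HC(v_{\phi_c(\k)}z)$; expanding gives $\Phi_c=\sum_{j=0}^{N}c^{2j}\Phi^{(j)}$ for fixed linear maps $\Phi^{(j)}:V\to S(\a)$, so the images $\im(\Phi_c)\sub S(\a)$ form an algebraic family over $\Bbbk^\times\setminus\{\pm1\}$ of constant rank $\dim V$. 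To establish the conjecture it would suffice to construct, for each $c\neq\pm1$, an invertible linear map $T_c:V\to V$ satisfying $\Phi_c=\Phi_i\circ T_c$; such a family of intertwiners would identify every image with $HC(\AA_{G/K})$.

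The key tool for constructing $T_c$ is the factorization $e_{-\alpha}+c^2\theta e_{-\alpha}=(e_{-\alpha}-\theta e_{-\alpha})+(c^2+1)\theta e_{-\alpha}$. Expanding $v_{\phi_c(\k)}z$ via this decomposition produces a leading term $v_{\k'}z$ plus corrections involving products of the elements $\theta e_{-\alpha}$, which lie in $\n$ by our choice of Iwasawa Borel (since $\theta e_{-\alpha}\in\g_1$ has positive restricted root in the special-pair setting, as $\theta|_{\a}=-\id$). After reordering via PBW -- pushing $\n$-factors leftward where they are annihilated by $HC$, and absorbing the resulting commutators into a modification of $z$ -- one should obtain an identity of the form $HC(v_{\phi_c(\k)}z)=HC(v_{\k'}T_c(z))$. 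The containment $HC(\AA_{G/K})\sub HC(\ZZ_{G/K})$ would then follow by semicontinuity of the family as $c\to\pm1$, since in that limit $\im(\Phi_1)=HC(\ZZ_{G/K})$.

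I anticipate the main obstacle will be carrying out the PBW reordering in closed form: while the leading and lowest order coefficients in $c^2$ are tractable via \cref{cor highest order term}, controlling the intermediate coefficients requires careful tracking of nested $\UU\g$-commutators mixing $\n$-terms and $\k$-terms, and producing a uniform formula for $T_c$ rather than merely a term-by-term matching. An alternative route, should the combinatorics prove intractable, would be to show that $HC(\AA_{G/K,c})$ satisfies the root-theoretic vanishing conditions of \cref{conj_intro}, which are manifestly $c$-independent; this verification would reduce (modulo that conjecture) to checking that the $\phi_c(\k)$-invariance of $v_{\phi_c(\k)}z$ forces the appropriate vanishing on highest-weight functions $f_\lambda$ with $\lambda$ atypical, a statement that depends only on the representation theory of atypical $V(\lambda)$ and not on the deformation parameter $c$.
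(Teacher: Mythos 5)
First, the paper presents this statement as a \emph{conjecture}, not a theorem --- the author explicitly writes that it is ``based on phenomenon observed in the diagonal case'' in \cite{sherman2021ghost} and gives no argument. There is therefore no proof in the paper for you to have matched or deviated from; your text can only be read as a free-standing strategy, and on that reading it is, as you yourself concede, a sketch that stops short at its central step. Note also that the opening dimension count is only meaningful filtration-degree by filtration-degree, since $V$ and the $HC$-images are infinite-dimensional; this is repairable using the analogue of \cref{cor highest order term} for $v_{\phi_c(\k)}$ established in the injectivity proposition, but it needs to be said.

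There are two substantive gaps. The first is the PBW combinatorics you flag yourself: after splitting $e_{-\alpha}+c^2\theta e_{-\alpha}=(e_{-\alpha}-\theta e_{-\alpha})+(c^2+1)\theta e_{-\alpha}$, expanding $v_{\phi_c(\k)}z$ yields commutators of the $\n$-factors $\theta e_{-\alpha}$ against the remaining odd factors and against $z$, and there is no a priori reason these error terms reassemble as $v_{\k'}\cdot(\text{new }z)$ with the new $z$ still lying in $V=(\UU\g_{\ol0}/\UU\g_{\ol0}\k_{\ol0})^{\k_{\ol0}}$; until a $T_c$ is actually exhibited, the equality $HC(\AA_{G/K,c})=HC(\AA_{G/K})$ is not established. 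The second, more serious, issue is the semicontinuity step. The map $\Phi_c(z)=HC(v_{\phi_c(\k)}z)$ is built from the induction isomorphism $z\mapsto v_{\phi_c(\k)}z$, which the paper proves \emph{only} for $c\neq\pm1$; at $c=\pm1$ this composite is not the natural inclusion of $\ZZ_{G/K}$ into $S(\a)$ at all, and injectivity of $HC$ on $\AA_{G/K,\pm1}=\ZZ_{G/K}$ is proved by a completely separate argument (Alldridge). Moreover, even if $\Phi_c$ extended to a family of constant rank across $c=\pm1$, the Grassmannian-continuity argument would force $\im\Phi_{\pm1}=\im\Phi_c$, i.e.~ $HC(\ZZ_{G/K})=HC(\AA_{G/K})$, which is strictly stronger than the conjectured inclusion and not obviously true; while if the rank drops at $c=\pm1$, the limit only gives $\im\Phi_{\pm1}\sub\lim_{c\to\pm1}\im\Phi_c$, which is the \emph{reverse} containment. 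Either way, semicontinuity does not deliver $HC(\AA_{G/K})\sub HC(\ZZ_{G/K})$. The fallback route you propose --- verifying the $c$-independent vanishing conditions of \cref{conj_intro} directly --- is more plausible, but would establish the present conjecture only conditionally on \cref{conj_intro}, which is itself open.
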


\begin{prop}
The space $\AA_{G/K}^{full}$ admits the structure of a commutative algebra such that $\AA_{G/K}$ is naturally a subalgebra.
\end{prop}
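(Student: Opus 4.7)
My plan is to extend the construction of Proposition 5.3 from the single interlacing automorphism to the entire one-parameter family $\{\phi_c\}_{c\in\Bbbk^\times}$ that exists for special pairs. Two features of this family will drive the argument: first, the group law $\phi_c\phi_d=\phi_{cd}$ will give the expected multiplicative behaviour of the indices; second, each $\phi_c$ is trivial on $\g_{\ol 0}$ and fixes $\a$ pointwise while preserving $\n$, which is exactly what is needed to make the Harish-Chandra projection behave well with respect to conjugation by $\phi_c$. I expect $\AA_{G/K}^{full}$ to be naturally graded by $\Bbbk^\times/\{\pm 1\}$, with $\AA_{G/K,c}\cdot\AA_{G/K,d}\subseteq\AA_{G/K,cd}$. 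Interpreting $\AA_{G/K}$ in the statement as $\widetilde{\ZZ_{G/K}}=\AA_{G/K,1}\oplus\AA_{G/K,i}$ (where $\phi_i$ is the interlacing automorphism of Section 5), the subalgebra property then corresponds to the subgroup $\{\pm 1,\pm i\}\subset\Bbbk^\times$, and recovers the product of Proposition 5.3.

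For the product itself, given $u\in\AA_{G/K,c}$, $v\in\AA_{G/K,d}$ and lifts $\tilde u,\tilde v\in\UU\g$, I would set
\[
u\cdot v := \phi_d(\tilde u)\tilde v\pmod{\UU\g\k}.
\]
Independence of the lifts follows from the two invariances: a change $\tilde v\mapsto\tilde v+xk$ with $k\in\k$ is absorbed on the right, while a change $\tilde u\mapsto\tilde u+yk_0$ yields the error term $\phi_d(y)\phi_d(k_0)\tilde v\in\UU\g\k$ using the $\phi_d(\k)$-invariance of $v$. The product lies in $\AA_{G/K,cd}$: for $k'=\phi_d(k'')\in\phi_{cd}(\k)$ with $k''\in\phi_c(\k)$, the identity
\[
k'\phi_d(\tilde u)\tilde v = \phi_d(k''\tilde u)\tilde v,
\]
combined with $\phi_c(\k)$-invariance of $u$ (giving $k''\tilde u\in\UU\g\k$) and $\phi_d(\k)$-invariance of $v$, shows the expression is in $\UU\g\k$.

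Associativity is the group law $\phi_c\phi_d=\phi_{cd}$ unwound: expanding both $(u\cdot v)\cdot w$ and $u\cdot(v\cdot w)$ for $w\in\AA_{G/K,e}$ yields $\phi_{de}(\tilde u)\phi_e(\tilde v)\tilde w$ in both cases. Commutativity will use the injectivity of $HC$ on each $\AA_{G/K,c}$ proved earlier in this section: since $\phi_d$ fixes $\a$ and preserves $\n$, a short PBW computation gives $HC(u\cdot v)=HC(u)HC(v)=HC(v\cdot u)$ in $S(\a)$, and injectivity forces $u\cdot v=v\cdot u$. I expect the main obstacle to be bookkeeping rather than any single hard step: one must carefully track the index $c$ through every product, ensuring that invariance classes and quotients line up consistently as one chains products across different values of $c,d\in\Bbbk^\times/\{\pm 1\}$.
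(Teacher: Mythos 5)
Your proposal is correct and follows essentially the same route as the paper: you define the product by $u_c\cdot u_d:=\phi_d(\tilde u_c)\tilde u_d\pmod{\UU\g\k}$ (identical to the paper's formula), verify it lands in $\AA_{G/K,cd}$, derive associativity from $\phi_c\phi_d=\phi_{cd}$, and obtain commutativity from the injectivity of $HC$ on each graded piece — exactly as the paper does by reference back to Proposition 5.3. Your explicit independence-of-lifts check and the observation that the statement's ``$\AA_{G/K}$ is naturally a subalgebra'' must be read as ``$\widetilde{\ZZ_{G/K}}=\AA_{G/K,1}\oplus\AA_{G/K,i}$ is a subalgebra'' (since $\AA_{G/K}\cdot\AA_{G/K}\subseteq\ZZ_{G/K}$) are both sound and fill in detail the paper leaves implicit.
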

\begin{proof}
 Set $\AA_{G/K_c,d}:=\Dist(G/K_c,eK_c)^{K_d}$.  Note that 
\[
\AA_{G/K_c,\pm c}=\ZZ_{G/K_c}, \ \ \ \ \AA_{G/K_c,\pm ic}=\AA_{G/K_c}.
\]
Then we have natural product maps:
\[
\AA_{G/K_c,d}\otimes\AA_{G/K_r,c}\to\AA_{G/K_r,d},
\]
and isomorphisms
\[
\AA_{G/K_c,d}\xto{\phi_{c^{-1}}}\AA_{G/K,c^{-1}d}.
\]
Further we observe that $\phi_{-1}=\phi_1=\id$ on $\AA_{G/K_c,d}$ for all $c,d$; it follows that $\phi_{c}=\phi_{-c}$.   We define the product structure on $\AA_{G/K}^{full}$ as follows: for $u_c\in\AA_{G/K,c}$, $u_d\in\AA_{G/K,d}$, we set
\[
u_c\cdot u_d:=\phi_d(u_c)u_d.
\]
Now checking associativity and commutativity are exactly as in Proposition \ref{ghost algebra prop}.
\end{proof}

\subsection{Lifting $\AA_{G/K}^{full}$ to $G$-equivariant operators}\label{section lifting ops full}  Just as in Section \ref{sec lifting ops}, we may realize $\AA_{G/K}^{full}$ as an algebra of operators on $G/K$.  For this observe that we have a surjection $\Ad:Z(G_0)\to\Aut(\g,\g_{\ol{0}})$.  We \textbf{assume} that there exists a torus $\G_m\sub Z(G_0)$ such that the map $\G_m\to\Aut(\g,\g_{\ol{0}})$ is surjective, and that $\Ad^{-1}(\pm1)\cap\G_m\sub K_0$.  

Now, given $u\in\AA_{G/K,c}$, we let $z\in\G_m$ be such that $\Ad(z)=c\in\Aut(\g,\g_{\ol{0}})$ and set:
\[
\tilde{u}:=R_{z}^*\circ(1\otimes u)\circ a^*.
\]
Then we have
\begin{thm}
	The map $u\mapsto\tilde{u}$ defines an injective morphism of algebras
\[
\AA_{G/K}^{full}\to\End_{G}( \Bbbk[G/K])
\]
such that for $\lambda\in\Lambda$ we have $\frac{\tilde{u}(f_{\lambda})}{f_{\lambda}}=(e^\lambda(z))HC(u)(-\lambda)$.
\end{thm}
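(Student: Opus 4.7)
The plan is to parallel the argument in Theorem \ref{thm lifting to ops}, with the single interlacing element $t$ replaced by a family of central elements $z_c\in Z(G_0)\cap\G_m$ satisfying $\Ad(z_c)=c$. First I would verify that each $\tilde u$ is well defined: for $u\in\AA_{G/K,c}$, the image of $(1\otimes u)\circ a^*$ on $\Bbbk[G/K]$ lies in $\Bbbk[G]^{K_c}$ by the $K_c$-invariance of $u$; and since $z_c\in Z(G_0)$ preserves $K_0=(K_c)_0$ and realises $\phi_c$ on Lie algebras, we have $z_c K z_c^{-1}=K_c$, so $R_{z_c}^*$ carries $\Bbbk[G]^{K_c}$ back into $\Bbbk[G]^K=\Bbbk[G/K]$. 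Independence of the choice of $z_c$ follows from $\Ad^{-1}(\pm 1)\cap\G_m\subset K_0$: any two choices differ by an element of $K$, which acts trivially on $\Bbbk[G]^K$. Left translations commute with right translations, so $\tilde u$ is $G$-equivariant.

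The heart of the proof is multiplicativity. For $u_c\in\AA_{G/K,c}$ and $u_d\in\AA_{G/K,d}$, one computes
\[
\tilde{u_c}\tilde{u_d} \;=\; R_{z_c}^*\circ(1\otimes u_c)\circ a^*\circ R_{z_d}^*\circ(1\otimes u_d)\circ a^*
\]
and moves $R_{z_d}^*$ leftward through $(1\otimes u_c)\circ a^*$ via the identity
\[
(1\otimes u_c)\circ a^*\circ R_{z_d}^* \;=\; R_{z_d}^*\circ(1\otimes\phi_d(u_c))\circ a^*,
\]
which encodes that right translation by $z_d$ conjugates $u_c$ by $\phi_d$ at the level of distributions. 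Combining this with $R_{z_c}^*\circ R_{z_d}^*=R_{z_cz_d}^*$ and the observation that $z_cz_d$ can serve as $z_{cd}$ (any discrepancy lies in $\Ad^{-1}(1)\cap\G_m\subset K_0$), one obtains $\tilde{u_c}\tilde{u_d}=\widetilde{\phi_d(u_c)u_d}=\widetilde{u_c\cdot u_d}$, matching the product on $\AA_{G/K}^{full}$ from the preceding proposition.

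For the eigenvalue formula and injectivity, the direct calculation
\[
\operatorname{res}_{eK}\circ\tilde u(f_\lambda) \;=\; u\circ L_{z_c}^*(f_\lambda) \;=\; (e^\lambda(z_c))\,u(f_\lambda) \;=\; (e^\lambda(z_c))HC(u)(-\lambda)
\]
establishes the stated formula. For fixed $c$, if $\tilde u=0$ then $HC(u)$ vanishes on the Zariski-dense set $-\Lambda^+\subset\a^*$, forcing $u=0$ by injectivity of $HC$ on $\AA_{G/K,c}$. To separate contributions from different classes $c\pmod{\pm 1}$ in a relation $\sum \tilde{u_{c_i}}=0$, one uses that the characters $\lambda\mapsto e^\lambda(z_{c_i})$ on $\Lambda$ are distinct for distinct $c_i\pmod{\pm 1}$ and runs a Vandermonde-type argument on the $f_\lambda$-eigenvalues.

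The main obstacle, as in Theorem \ref{thm lifting to ops}, is verifying the key commutation identity $(1\otimes u_c)\circ a^*\circ R_{z_d}^* = R_{z_d}^*\circ(1\otimes\phi_d(u_c))\circ a^*$. This requires carefully unwinding how right translation by a central element interacts with the coaction $a^*$ and with distributions at $eK$; the centrality of $z_d$ in $G_0$ is essential in matching the conjugation of distributions coming from $R_{z_d}^*$ with the automorphism $\phi_d=\Ad(z_d)$.
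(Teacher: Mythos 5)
Your outline tracks the paper's intended argument, which is simply to run the proof of Theorem \ref{thm lifting to ops} with the interlacing element $t$ replaced by the central elements of $\G_m$; well-definedness, the key commutation identity, the resulting multiplicativity, and the eigenvalue computation all go exactly as you describe.

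Where you genuinely diverge from the paper is the injectivity step, and there you have an unverified claim. The paper's injectivity argument in Theorem \ref{thm lifting to ops} separates $\tilde z$ from $\tilde u$ by the fact that $\operatorname{res}_{eK}\circ\tilde z$ is supported at $eK$ while $\operatorname{res}_{eK}\circ\tilde u$ is supported at $tK\neq eK$ (the differential/non-differential dichotomy). The direct extension here is to observe that $\operatorname{res}_{eK}\circ\tilde{u}_c$ is a distribution on $G/K$ supported at $z_cK$, and that $z_cK=z_dK$ forces $z_cz_d^{-1}\in K_0\cap\G_m=\Ad^{-1}(\pm 1)\cap\G_m$, i.e.\ $c=\pm d$; disjoint supports then kill each term in a relation $\sum\tilde{u}_{c_i}=0$ separately, and injectivity of $HC$ on each $\AA_{G/K,c}$ finishes. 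Your Vandermonde-type alternative instead rests on the assertion that the characters $\lambda\mapsto e^\lambda(z_c)$ of $\Lambda$ are pairwise distinct for distinct $c\bmod\pm 1$. This is true but is not free: you need to argue that $e^\lambda(z)=1$ for all $\lambda\in\Lambda^+$ forces $L_z^*=\operatorname{id}$ on $\Bbbk[G/K]$ (use that $L_z^*$ is $G$-equivariant and $\Bbbk[G/K]$ is generated by the $f_\lambda$), hence $z\in K_0$, hence $c=\pm 1$. As written this is a gap. One additional small caution: pushing $R_{z_d}^*$ through $(1\otimes u_c)\circ a^*$ produces $\Ad(z_d^{-1})(u_c)=\phi_{d^{-1}}(u_c)$ rather than $\phi_d(u_c)$; this is invisible in Theorem \ref{thm lifting to ops} where $\phi=\phi^{-1}$ on $\AA_{G/K}$, but for general $c,d$ you need to reconcile it carefully with the product formula $u_c\cdot u_d=\phi_d(u_c)u_d$ and with the sign of $\Ad(z)$ chosen to define $\tilde u$.
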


\begin{proof}
	The proof works in the exact same way as that of \cref{thm lifting to ops}.
\end{proof}

\subsection{The algebra $\DD^{G,\bullet}(G/K)$}
We continue with the same assumption of \cref{section lifting ops full}, meaning that we have a torus $\G_m\sub Z(G_0)$ such that the map $\Ad:\G_m\to\Aut(\g,\g_{\ol{0}})$ is surjective and $\Ad^{-1}(\pm1)\cap\G_m\sub K_0$.  Because $\Ad^{-1}(1)=Z(G)$ and we may as well quotient by $Z(G)\cap K$, we can and will assume that $\Ad:\G_m\to\Aut(\g,\g_{\ol{0}})$ is an isomorphism.  Thus we only need to \textbf{assume} that $\Ad^{-1}(-1)\cap\G_m\sub K_0$.

Let $z\in \G_m$, and $u\in\AA_{G/K,\Ad(z)}$.  Then define
\[
D_u:=L_{z^{-1}}^*\circ R_{z}^*\circ(1\otimes u)\circ a^*: \Bbbk[G/K]\to \Bbbk[G/K]
\]
\begin{lemma}\label{lemma D_u diff op}
	The operator $D_u$ defines a differential operator on $G/K$ such that it is $\Ad(z^{-1})$-twisted equivariant, i.e.~
	\[
	vD_u-D_u\Ad(z^{-1})(v)=0
	\]
	for all $v\in\g$.
\end{lemma}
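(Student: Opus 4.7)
The plan is to prove both parts of the lemma by unwinding the definition. Noting that $L_{z^{-1}}^*\circ R_z^* = c_{z^{-1}}^*$ is the pullback by the inner automorphism $c_{z^{-1}}\colon G\to G$ of conjugation by $z^{-1}$, one computes
\[
D_u f(x) = u_h\bigl[f(z^{-1}xzh)\bigr]
\]
for $f\in\Bbbk[G/K]$ and $x\in G$, where $u$ is regarded (via any lift $\hat u$ to $\UU\g$) as a distribution at $e$ acting on the variable $h$. This formula will be the workhorse for both parts.

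For the twisted equivariance, the $G$-action is $v^R = (v\otimes 1)\circ a^*$, which in infinitesimal form gives $v^R f(x) = \partial_t|_{t=0} f(\exp(tv)x)$. Applying $v^R$ to $D_u f$ and using the identity $z^{-1}\exp(tv)z = \exp(t\Ad(z^{-1})v)$ (equivalently, the change of variable $g\mapsto zgz^{-1}$ on $\UU\g$ realizes $\Ad(z^{-1})$), I obtain
\[
v^R D_u f(x) = u_h\bigl[(\Ad(z^{-1})v)^R f(z^{-1}xzh)\bigr] = D_u\bigl((\Ad(z^{-1})v)^R f\bigr)(x),
\]
which is precisely the twisted equivariance statement $v D_u = D_u \Ad(z^{-1})(v)$.

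For the differential operator property, I would use the commutation identity $c_{z^{-1}}^*\circ (1\otimes\hat u)\circ a^* = (1\otimes\Ad(z)\hat u)\circ a^*\circ c_{z^{-1}}^*$, which follows from $c_{z^{-1}}$ being a group automorphism (whose effect on distributions at $e$ is $\Ad(z^{-1})$ and on the coproduct is the obvious twist). This lets me rewrite $D_u = \widetilde{\Ad(z)\hat u}\circ c_{z^{-1}}^*$, where $\widetilde{\Ad(z)\hat u}$ is the genuine left-invariant differential operator on $\Bbbk[G]$ corresponding to $\Ad(z)\hat u$. Because $z\in Z(G_0)$, the conjugation $c_{z^{-1}}$ restricts to the identity on the underlying classical group $G_0$, so its nontriviality on $\Bbbk[G]$ is confined to the finitely many odd directions, where it acts by the simple rescaling $\Ad(z^{-1})$.

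The hard part is rigorously checking that this composition defines a differential operator on $G/K$ in the sense of Grothendieck. My plan is to verify the commutator criterion: for each $f\in\Bbbk[G/K]$, show that $[D_u, m_f]$ has strictly lower order than $D_u$, where $m_f$ denotes multiplication by $f$. Because $c_{z^{-1}}^*$ is an algebra automorphism, it intertwines multiplications up to the $\Ad(z^{-1})$-rescaling on the odd generators; combined with the finite PBW order of $\widetilde{\Ad(z)\hat u}$ and induction on the filtration of $\hat u$, this should yield that $D_u$ is a differential operator on $G/K$ whose order is bounded by that of $u$.
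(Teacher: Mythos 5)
Both halves of your argument are sound, and your route to the differential-operator claim is genuinely different from the paper's. The twisted-equivariance computation via $D_uf(x)=u_h[f(z^{-1}xzh)]$ and the identity $z^{-1}\exp(tv)z=\exp(t\Ad(z^{-1})v)$ is correct and is exactly the kind of unwinding the paper leaves as a ``straightforward check.'' For the harder claim, the paper first proves an abstract criterion (the preceding \cref{diff_op_criteria}): if $L$ is an operator on a smooth affine supervariety of odd dimension $n$ such that for every closed point $x$ there is some $j$ with $L(\m_x^N)\sub\m_x^{N-j}$ for $N\geq j$, then $L$ is a differential operator; the proof rests on expanding the iterated commutator of $L$ with $N+n+1$ multiplications as a signed sum $\sum_J\pm f_{J^c}L(f_Jg)$ and on the vanishing $\bigcap_x\m_x^{n+1}=0$. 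It then verifies the filtration-lowering condition for $D_u$ at the single point $eK$ by a contradiction argument that pushes vector fields $v_1\cdots v_r$ through $D_u$ using the twisted equivariance, with homogeneity of $G/K$ covering every other point. You instead exploit the factorization $D_u=\widetilde{\Ad(z)\hat u}\circ c_{z^{-1}}^*$ on $\Bbbk[G]$ (which is correct): the first factor is a left-invariant differential operator, and since $z\in Z(G_0)$ the conjugation $c_{z^{-1}}^*$ fixes $\Bbbk[G_0]$ pointwise and only rescales odd generators, so relative to a splitting $\Bbbk[G]\cong\Bbbk[G_0]\otimes\Lambda\g_{\ol1}^*$ it is \emph{itself} a differential operator of order at most $\dim\g_{\ol1}$; hence $D_u$ is a finite-order differential operator on $\Bbbk[G]$, and since it preserves $\Bbbk[G/K]$ it restricts to one there (by induction on order, using that $[D_u,m_f]$ preserves $\Bbbk[G/K]$ whenever $D_u$ and $m_f$ do). Your approach gives a cleaner structural picture of $D_u$; the paper's gives a reusable point-local filtration criterion. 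Two corrections to your sketch: the decisive observation --- that $c_{z^{-1}}^*$ is itself a finite-order differential operator because $z$ is central in $G_0$ --- should be stated and justified explicitly rather than left to ``this should yield''; and your claimed order bound is off: the order of $D_u$ on $\Bbbk[G]$ is bounded by the degree of $u$ \emph{plus} $\dim\g_{\ol1}$, not by the degree of $u$ alone.
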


First we prove a lemma.

\begin{lemma}\label{diff_op_criteria}
	Let $X$ be a smooth affine supervariety of dimension $(m|n)$, and let $L$ be an operator on $\Bbbk[X]$.  Suppose that for every closed point $x\in X(\Bbbk)$, there exists $j>0$ such that for $N\geq j$ we have $L(\m_x^N)\sub\m_x^{N-j}$.  Then $L$ is a differential operator.
\end{lemma}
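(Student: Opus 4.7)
The plan is to induct on $j$ using the Grothendieck inductive definition: $\DD^0(X)=\Bbbk[X]$ acting by multiplication, and $\DD^k(X)=\{M:[M,f]\in\DD^{k-1}(X)\text{ for all }f\}$. Since being a differential operator is Zariski-local, I would fix a closed point $x_0$ with shift parameter $j=j(x_0)$, choose local super-coordinates $(t_1,\ldots,t_m,\xi_1,\ldots,\xi_n)$ on an affine neighborhood $U$ so that $\m_{x_0}=(t_1,\ldots,t_m,\xi_1,\ldots,\xi_n)$, and shrink $U$ so that the same $j$ serves at every closed point of $U$ (in the intended applications $L$ is equivariant, so uniformity of $j$ is automatic).

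The key calculation is that a single commutator decreases the shift by one: for $f\in\Bbbk[U]$ and a closed point $x\in U(\Bbbk)$, decompose $f=f(x)+f'$ with $f'\in\m_x$. Since scalars commute with $L$, $[L,f]=[L,f']$, and for $g\in\m_x^N$ with $N$ large,
\[
[L,f'](g)=L(f'g)-f'L(g)\in L(\m_x^{N+1})+\m_x L(\m_x^N)\subseteq\m_x^{N+1-j}+\m_x^{N-j+1}=\m_x^{N+1-j}.
\]
Hence $[L,f]$ satisfies the same hypothesis on $U$ with $j$ replaced by $j-1$, so by induction $[L,f]\in\DD^{j-1}(U)$ for every $f$, giving $L\in\DD^j(U)$ and hence $L\in\DD(X)$ by gluing.

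The base case $j=0$ asserts that an operator preserving every $\m_x$-adic filtration is multiplication by $L(1)$. By $\Bbbk$-linearity, $L$ induces multiplication by the scalar $L(1)(x)$ on $\Bbbk[U]/\m_x\cong\Bbbk$, so $L(f)-L(1)f$ vanishes at every closed point. The main obstacle is the super setting: $\bigcap_x\m_x$ equals the odd ideal $(\xi_1,\ldots,\xi_n)$ rather than zero, so pointwise vanishing is not enough to conclude $L=L(1)\cdot(-)$. To handle this, I would pass to the completion $\widehat{\OO}_{x_0}\cong\Bbbk[[t_i,\xi_j]]$ and argue that a continuous $\Bbbk$-linear endomorphism which preserves the $\m$-adic filtration at every closed point of $U$ and annihilates $1$ must act by zero on each topological basis element $t^\alpha\xi^\beta$; the higher-power filtration conditions $L(\m_y^N)\subseteq\m_y^N$ at nearby closed points $y\in U_{\bar 0}$ are crucial to rule out even products of odd coordinates like $\xi_i\xi_j$, which vanish at closed points but are not themselves zero.
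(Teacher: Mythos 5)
Your commutator computation is correct and is implicitly what drives the paper's proof as well. But your base case is false: an operator with shift $j=0$ (i.e.\ preserving $\m_x^N$ for all $N$ and all closed points $x$) need \emph{not} be multiplication by $L(1)$. Take $X=\A^{0|1}$ with odd coordinate $\xi$ and $L=\xi\partial_\xi$. Then $L(1)=0$ yet $L(\xi)=\xi\neq 0$, and $L$ preserves the $\m_x$-adic filtration at the unique closed point $x$ (indeed $L(\m_x^N)\sub\m_x^N$ for every $N$), so $L$ satisfies shift $0$ but is a first-order operator, not a multiplication. Your own remark that $\bigcap_x\m_x$ is only the nilradical correctly identifies the symptom, but the completion argument you sketch cannot repair the step: the assertion you are trying to prove is simply false, not merely hard to verify.

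The fix is to change the endpoint of the induction: shift $0$ should be shown to imply order $\le n$ (the odd dimension), not order $0$. This is precisely where the paper's auxiliary lemma $\bigcap_{x\in X(\Bbbk)}\m_x^{n+1}=0$ enters: if $L$ has shift $0$, then by your commutator step $[f_1,\cdots,[f_{n+1},L]\cdots]$ has shift $-(n+1)$, so applied to any $g$ it lands in $\m_x^{n+1}$ for every closed $x$ and therefore vanishes, giving $L\in\DD^n(X)$. With this corrected base case your induction unwinds to exactly the paper's direct argument: one iterated commutator of length $j+n+1$ which, after translating the $f_i$ so that $f_i(x)=0$, lies in $\m_x^{n+1}$ at every closed point and hence is zero. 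The paper simply performs the whole computation in one step rather than descending through $j$, using the identity $[f_1,\cdots,[f_M,L]\cdots](g)=\sum_J\pm f_{J^c}L(f_J g)$. Both your write-up and the paper's tacitly assume a uniform bound on $j$ over all closed points; in the application this is guaranteed by $G_0$-equivariance, as you note.
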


\begin{proof}
Let $f_1,\dots,f_M\in \Bbbk[X]$ where $M=N+n+1$, and without loss of generality assume that $X$ admits global odd coordinates $\xi_1,\dots,\xi_n$.  For $g\in \Bbbk[X]$ we have:
\[
   [f_1,\cdots,[f_M,L]\cdots](g)=\sum\limits_{J}\pm f_{J^c}L(f_Jg),
\]
   where $J\sub\{1,\dots,M\}$ is a subset and $J^c$ is the complement. For a closed point $x\in X(\Bbbk)$, we may assume that $f_i(x)=0$ for all $i$, and so we see that the above expression lies in $\m_x^{n+1}$.  Since $x$ is arbitrary, we may conclude from the following lemma, which is a consequence of Thm A.2 of \cite{masuoka_zubkov}.
\end{proof}

\begin{lemma}
	Let $X$ be a smooth affine supervariety of dimension $(m|n)$.  Then
	\[
	\bigcap\limits_{x\in X(\Bbbk)}\m_x^{n+1}=0.
	\] 
\end{lemma}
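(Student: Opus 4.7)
My plan is to argue by induction on the filtration given by powers of the odd nilradical. Let $J \subset \Bbbk[X]$ be the ideal generated by the odd part $\Bbbk[X]_{\bar 1}$, and for $k \geq 1$ let $J_k \subset \Bbbk[X]$ be the ideal generated by $k$-fold products of odd elements, so $J_1 = J$ and $J_{k+1} \subset J_k$. Because $X$ has odd dimension $n$, at each closed point there are odd local coordinates $\xi_1, \ldots, \xi_n$ for which $J_{n+1}$ vanishes locally, hence $J_{n+1} = 0$ globally. The base quotient $\Bbbk[X]/J = \Bbbk[X_0]$ is the algebra of functions on the underlying reduced variety $X_0$, which is smooth and affine, and smoothness makes each $J_k/J_{k+1}$ into a locally free $\OO_{X_0}$-module of rank $\binom{n}{k}$, locally spanned by the $k$-fold products $\xi^\alpha$ with $|\alpha| = k$. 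This local splitting is exactly the content of Thm A.2 of \cite{masuoka_zubkov}.

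With this setup it suffices to prove that for each $0 \leq k \leq n$, if $f \in J_k \cap \bigcap_{x \in X(\Bbbk)} \m_x^{n+1}$ then $f \in J_{k+1}$; iterating from $k = 0$ then forces $f \in J_{n+1} = 0$. For the base case $k = 0$, the image $\overline{f}$ of $f$ in $\Bbbk[X_0]$ satisfies $\overline{f} \in \bigcap_x \overline{\m}_x^{\,n+1} \subset \bigcap_x \overline{\m}_x$, where $\overline{\m}_x$ is the maximal ideal at $x$ in $\Bbbk[X_0]$. Since $X_0$ is reduced and $\Bbbk$ is algebraically closed, the Nullstellensatz gives $\bigcap_x \overline{\m}_x = 0$, so $\overline{f} = 0$ and $f \in J$.

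For the inductive step, I would work on an affine open $U \subset X_0$ on which odd coordinates $\xi_1, \ldots, \xi_n$ exist and expand $f|_U = \sum_\alpha c_\alpha \xi^\alpha$ with $c_\alpha \in \OO_{X_0}(U)$. The assumption $f \in J_k$ forces $c_\alpha = 0$ for $|\alpha| < k$. For a closed point $x \in U$, choosing local even coordinates $t_1, \ldots, t_m$ around $x$, a direct check shows that $f \in \m_x^{n+1}$ is equivalent to $c_\alpha \in \overline{\m}_x^{\,n+1-|\alpha|}$ for every $\alpha$; for $|\alpha| = k \leq n$ this in particular gives $c_\alpha(x) = 0$. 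Varying $x$ in $U$ and applying the Nullstellensatz once more to $X_0$, each such $c_\alpha$ vanishes identically on $U$, so the image of $f$ in $J_k/J_{k+1}$ vanishes over $U$. Covering $X$ by such opens and using that $J_k/J_{k+1}$ is a sheaf yields $f \in J_{k+1}$ globally.

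The step that requires the most care is the local model: the existence of odd local coordinates $\xi_1, \ldots, \xi_n$ realizing $\Bbbk[X]$ locally as a Grassmann algebra over $\OO_{X_0}$, together with the identification of $J_k/J_{k+1}$ with a locally free $\OO_{X_0}$-module having basis the $k$-fold products $\xi^\alpha$. This is precisely what Thm A.2 of \cite{masuoka_zubkov} supplies for a smooth affine supervariety; once it is invoked, the argument reduces to two applications of the classical Nullstellensatz on the reduced base $X_0$.
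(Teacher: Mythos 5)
Your proof is correct, and it is worth noting that the paper itself offers no proof of this lemma at all: it is stated as a bare consequence of Thm A.2 of \cite{masuoka_zubkov}, with the derivation left to the reader. What you have done is supply that derivation, and the route you take is the natural one. You invoke the local Batchelor-type triviality (smoothness produces odd local coordinates $\xi_1,\dots,\xi_n$ realizing $\Bbbk[X]$ locally as a Grassmann algebra over $\OO_{X_0}$, which is exactly what Thm A.2 provides), then filter by the odd ideal and reduce to the classical Nullstellensatz on $X_0$. The key local computation --- that in coordinates, $f=\sum_\alpha c_\alpha\xi^\alpha$ lies in $\m_x^{n+1}$ precisely when each $c_\alpha\in\bar\m_x^{\,n+1-|\alpha|}$ --- is the crux, and it is correct: a product of $n+1$ elements of $\m_x$ contributing a monomial $\xi^\alpha$ can draw a nontrivial odd part from at most $|\alpha|$ of the factors, so at least $n+1-|\alpha|$ of them contribute a coefficient in $\bar\m_x$. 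Only the forward implication is actually used in the inductive step, and the gluing at the end is legitimate because the vanishing of the image of $f$ in $J_k/J_{k+1}$ is a coordinate-free statement about a section of a locally free sheaf on $X_0$. In short, the argument is sound and constitutes a complete expansion of the paper's one-line citation.
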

%\begin{proof}
%	We may assume that $X$ has global odd coordinates $\xi_1,\dots,\xi_n$, and so we can write
%	\[
%	f=\sum\limits_I f_I\xi_I.
%	\]
%	Applying the compositions of the operators $\d_{\xi_i}$, we find that $f_I(x)=0$ for all $x\in X(\Bbbk)$ and for all $I$, and thus $f=0$.
%\end{proof}

\begin{proof}[Proof of Lemma \ref{lemma D_u diff op}]
	The twisted equivariance is a straightforward check.  It remains to check that it defines a differential operator, and for this we use \cref{diff_op_criteria}.   First of all notice that since $D_u$ is $G_0$-equivariant, it suffices to prove that there exists $j\geq0$ such that for $N\geq j$ we have $D_u(\m_{eK}^N)\sub\m_{eK}^{N-j}$.
	
	For this we begin by noticing that $\operatorname{res}_{eK}D_{u}=u$; thus if we let $j$ be the degree of $u$, we have $D_u(\m_{ek}^N)\sub\m_{eK}$ for all $N>j$.  Now suppose that for $N\geq j$ we have $D_u(\m_{eK}^N)\not\sub\m_{eK}^{N-j}$.  Then there exists $f\in\m_{eK}^N$ such that $D_u(f)\in\m_{eK}^{r}$ for some $r<N-j$.  Since $G/K$ is a homogeneous space, it follows that there exists $v_1,\dots,v_r\in\g$ such that $v_1\cdots v_rD_u(f)(eK)\neq0$.  However by the invariance properties of $D_u$, it must follow that
	\[
	v_1\cdots v_rD_u(f)=D_u(\Ad(z^{-1})(v_1)\cdots \Ad(z^{-1})(v_r)f)(eK)\neq0.	
	\]
	However $\Ad(z^{-1})(v_1)\cdots \Ad(z^{-1})(v_r)f\in\m_{eK}^{N-r}$;  by assumption, $N-r>j$, so we must have $D_u(\Ad(z^{-1})(v_1)\cdots \Ad(z^{-1})(v_r)f)\in\m_{eK}$, a contradiction.  This completes the proof.	
\end{proof}

Write $\DD^{G,z}(G/K)$ for the $\Ad(z)$-twisted equivariant differential operators on $G/K$.  Let 
\[
\DD^{G,\bullet}(G/K):=\sum\limits_{z\in \G_m}\DD^{G,z}(G/K).
\]
Then $\DD^{G,\bullet}(G/K)$ is a subalgebra of $\DD(G/K)$.

\begin{thm}
	The map
	\[
	\Dist(G/K,eK)^{G_z}\to\DD^{G,z^{-1}}(G/K), \ \ \ u\mapsto D_u
	\]
	defines an isomorphism of vector spaces.
\end{thm}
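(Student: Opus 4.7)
The plan is to construct an explicit inverse to $u \mapsto D_u$, namely the restriction-at-base-point map $D \mapsto \operatorname{res}_{eK} \circ D$. This mirrors the classical bijection $\DD^G(G/K) \cong (\UU\g/\UU\g\k)^K$, where a $G$-equivariant differential operator is recovered from its value at $eK$.

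First, for injectivity of $u \mapsto D_u$, I would directly verify $\operatorname{res}_{eK} \circ D_u = u$. Unwinding the definition,
\[
(D_u f)(eK) = \big(L_{z^{-1}}^* R_z^* (1 \otimes u) a^* f\big)(e) = \big((1 \otimes u) a^* f\big)(z^{-1} \cdot e \cdot z) = u(f),
\]
since the left and right translations by $z$ cancel at the identity. So the map has a left inverse and is injective.

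For surjectivity, given $D \in \DD^{G, z^{-1}}(G/K)$, set $u := \operatorname{res}_{eK} \circ D \in \Dist(G/K, eK)$. I need to verify (a) that $u$ lies in the appropriate $K_{\Ad(z)}$-invariant subspace $\AA_{G/K, \Ad(z)}$, and (b) that $D_u = D$. Claim (b) follows because both $D$ and $D_u$ are $\Ad(z^{-1})$-twisted equivariant with the same restriction at $eK$: their difference $E = D - D_u$ satisfies $vE = E \cdot \Ad(z^{-1})(v)$ for all $v \in \g$ and $\operatorname{res}_{eK} E = 0$, and integrating this relation along the $G$-action on the homogeneous superspace $G/K$ forces $Ef = 0$ everywhere (the twisted analogue of the classical determination of an equivariant differential operator by its value at a point). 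Claim (a) uses the infinitesimal form of twisted equivariance: for $v \in \k_{\Ad(z)}$, the element $\Ad(z^{-1})(v)$ lies in $\k$, since $\phi_{\Ad(z)} = \Ad(z)$ carries $\k$ bijectively onto $\k_{\Ad(z)}$; translating through the isomorphism $\Dist(G/K, eK) \cong \Ind_{\k_{\ol 0}}^{\phi_c(\k)}\Dist(G_0/K_0, eK_0)$ of the earlier proposition, this yields the required $K_{\Ad(z)}$-invariance of $u$.

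The main obstacle will be part (a): carefully translating the twisted $G$-equivariance of $D$ on $\Bbbk[G/K]$ into the correct invariance of $u$ under $K_{\Ad(z)}$, since $K_{\Ad(z)}$ does not stabilize $eK$ directly in $G/K$. This requires unpacking the identification with the induced module and tracking how the twisted action of $K_{\Ad(z)}$ on distributions passes through restriction at $eK$, but once this is set up the verification reduces to the fact that $\Ad(z^{-1})(\k_{\Ad(z)}) = \k$.
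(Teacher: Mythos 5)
Your plan is correct and matches the paper's approach: both establish the identity $\operatorname{res}_{eK}\circ D_u = u$ (showing $u\mapsto D_u$ is a split monomorphism) and then reduce surjectivity to the injectivity of $\operatorname{res}_{eK}$ on twisted-equivariant operators, which follows from the coaction identity $a^*\circ D = (\Ad(z^{-1})^*\otimes D)\circ a^*$ together with the factorization of the orbit map $a_{eK}$ — exactly the ``integrating along the $G$-action'' step you describe. The one place you go beyond the paper is in flagging step (a), the verification that $\operatorname{res}_{eK}(D)$ actually lands in the $K_{\Ad(z)}$-invariants; the paper takes this as immediate (it is implicit in declaring the codomain of $\operatorname{res}_{eK}$), whereas you correctly identify $\Ad(z^{-1})(\k_{\Ad(z)}) = \k$ as the infinitesimal mechanism, though you would still need to spell out how this infinitesimal relation, applied through the twisted equivariance of $D$, actually forces the vanishing of the $\phi_{\Ad(z)}(\k)_{\ol{1}}$-action on the distribution at $eK$.
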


\begin{proof}
	We already saw that $\operatorname{res}_{eK}D_u=u$, so it remains to show that $\operatorname{res}_{eK}:\DD^{G,z}(G/K)\to\Dist(G/K,eK)^{G_z}$ is injective.  The proof is almost identical to Prop. 3.4 of \cite{sherman2021ghost}, but we give it once again.
	
	Let $D\in\DD^{G_z}(G/K)$; then we have that
	\[
	a^*\circ D=\Ad(z^{-1})\otimes D\circ a^*.
	\]
	If $\operatorname{res}_{eK}(D)=0$, then $D(f)(eK)=0$ for all $f\in \Bbbk[G/K]$, or equivalently $a_{eK}^*D(f)(eK)=0$, where $a_{eK}:G\to G/K$ is the orbit map at $eK$.  But we have $a_{eK}=a\circ(\id_G\times i_{eK})$, so this says that
	\begin{eqnarray*}
	(\id_G\otimes i_{eK})\circ a^*(D(f))& = &(\Ad(z^{-1})^{*}\otimes \operatorname{res}_{eK}(D))(a^*(f))=0.
	\end{eqnarray*}
Thus $a_{eK}^*(D(f))=0$, which implies in turn that $D(f)=0$, so that $D=0$ as desired.	
\end{proof}
Note that we do not obtain an algebra map $\AA^{full}_{G/K}\to\DD^{G,\bullet}(G/K)$ because $K_{z}=K_{-z}$, while $\DD^{G,z}(G/K)\neq\DD^{G,-z}(G/K)$.

\subsection{Map from full ghost center of $\g$} Recall from Section 10 of \cite{sherman2021ghost} the full ghost center $\ZZ_{full}$ of $\UU\g$.  It is defined as follows: given $\phi_c\in\Aut(\g,\g_{\ol{0}})$, we let 
\[
\AA_{c}=\{u\in\UU\g: vu-(-1)^{\ol{u}\ol{v}}u\phi_c(v)=0\text{ for all }v\in\g\}.
\]
Then $\AA_c\AA_d\sub\AA_{cd}$, and we may define
\[
\ZZ_{full}:=\sum\limits_{c}\AA_c.
\]
The structure of this algebra was computed in \cite{sherman2021ghost}.  Now observe that we have a natural map
\[
\AA_{\Ad(z)}\to \DD^{G,z}(G/K).
\]
This induces an algebra homomorphism
\[
\ZZ_{full}\to \DD^{G,\bullet}(G/K).
\]
This map cannot be surjective; indeed, for $c\neq 1$, $HC(\AA_{c})$ was computed in \cite{sherman2021ghost}, and the lowest degree polynomial lying in $HC(\AA_c)$ is $\dim\g_{\ol{1}}/2$.  In our situation, if $c=-1$ we obtain $HC(\DD^{G,-1}(G/K)=HC(\ZZ_{G/K})$, and for $c\neq \pm1$ we know that $HC(\AA_{G/K,c})$ contains a polynomial of degree $\dim\k_{\ol{1}}/2$.

\section{Tools for Computing $HC(\AA_{G/K})$}

In this section we offer a few tools which can help to understand $HC(\AA_{G/K})$ in some cases.  However we note that they are far from strong enough for determining $HC(\AA_{G/K})$.  

\subsection{Reduction of pair} Let $(\g,\k)$ be a supersymmetric pair with an Iwasawa decomposition $\g=\k\oplus\a\oplus\n$.  Let $\g(\theta,\a)$ denote the Lie superalgebra generated by $\a$ and $\p_{\ol{1}}$. Then $\theta$ will induce an involution, which we continue to denote by $\theta$ on $\g(\theta,\a)$, whose fixed points we call $\k(\theta,\a)$, and $(-1)$-eigenspace we write as $\p(\theta,\a)$.  It is clear that $\p(\theta,\a)_{\ol{1}}=\p_{\ol{1}}$.

Clearly $\a\sub\g(\theta,\a)$ will continue to be a Cartan subspace, and the Iwasawa decomposition $\g(\theta,\a)=\k(\theta,\a)\oplus\a\oplus\n(\theta,\a)$ continues to hold, where $\n(\theta,\a)=\n\cap\g(\theta,\a)$.  It is not hard to check that $(\n(\theta,\a))_{\ol{1}}=\n_{\ol{1}}$.  

We may consider $\k(\theta,\a)'$; then we have a natural map
\[
\iota:\UU\k(\theta,\a)'/\UU\k(\theta,\a)'\k(\theta,\a)'_{\ol{0}}\to \UU\k'/\UU\k'\k_{\ol{0}}
\]
induced by the natural inclusion $\k(\theta,\a)'\to\k'$.
\begin{lemma}\label{lemma reduction trick}
	The map $\iota$ is an isomorphism, and for some $c\in \Bbbk^\times$ we have
	\[
	\iota(v_{\k(\theta,\a)})=cv_{\k'}.
	\]
\end{lemma}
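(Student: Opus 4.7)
The plan is to interpret both sides via PBW, verify that $\iota$ is the identity on the resulting monomial bases, and then locate both $\iota(v_{\k(\theta,\a)})$ and $v_{\k'}$ inside a common one-dimensional space of invariants.

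First I would set up PBW identifications. Since $\k'=\k_{\ol{0}}\oplus\p_{\ol{1}}$, ordering $\k_{\ol{0}}$-factors to the right shows that $V_2:=\UU\k'/\UU\k'\k_{\ol{0}}$ has a vector space basis given by ordered monomials in a fixed basis of $\p_{\ol{1}}$. Because $\p(\theta,\a)_{\ol{1}}=\p_{\ol{1}}$ and $\k(\theta,\a)'=\k(\theta,\a)_{\ol{0}}\oplus\p_{\ol{1}}$, the analogous argument gives $V_1:=\UU\k(\theta,\a)'/\UU\k(\theta,\a)'\k(\theta,\a)_{\ol{0}}$ literally the same monomial basis. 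The map $\iota$ sends each such monomial to itself, so it is a linear isomorphism. This handles the first assertion.

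Next I would verify that $\iota$ is $\k(\theta,\a)'$-equivariant, where $V_2$ carries the restriction of its $\k'$-module structure along the inclusion $\k(\theta,\a)'\hookrightarrow\k'$; this is immediate since left multiplication in $\UU\k'$ is compatible with the quotient map. Consequently $\iota$ restricts to an isomorphism
\[
(V_1)^{\k(\theta,\a)'}\xto{\sim}(V_2)^{\k(\theta,\a)'}.
\]
Next I would argue that $(V_1)^{\k(\theta,\a)'}$ is one-dimensional by precisely the mechanism recalled in Section 2: $V_1\cong\Ind_{\k(\theta,\a)_{\ol{0}}}^{\k(\theta,\a)'}(\Bbbk)$, and the only property needed is the triviality of $\Lambda^{\dim\p_{\ol{1}}}\p_{\ol{1}}$ as a $\k(\theta,\a)_{\ol{0}}$-module, which follows at once from triviality under the larger subalgebra $\k_{\ol{0}}$. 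In particular $v_{\k(\theta,\a)}$ spans this line.

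To finish, observe that $v_{\k'}\in(V_2)^{\k'}\sub(V_2)^{\k(\theta,\a)'}$, and the ambient space is one-dimensional by transport of structure through $\iota$. Hence $\iota(v_{\k(\theta,\a)})$ and $v_{\k'}$ are two nonzero elements of the same one-dimensional subspace, forcing $\iota(v_{\k(\theta,\a)})=c\,v_{\k'}$ for some $c\in\Bbbk^\times$. The main obstacle I anticipate is the step asserting that the one-dimensional invariants argument of Section 2 applies to the auxiliary pair $(\g(\theta,\a),\k(\theta,\a))$; this pair is not literally among the families listed in Section 4, but the argument needs only the triviality of the top exterior power of $\p_{\ol{1}}$ as a $\k(\theta,\a)_{\ol{0}}$-module, which is automatic.
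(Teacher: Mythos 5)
Your proof is correct and follows essentially the same route as the paper's: identify $\iota$ as a linear isomorphism (the paper phrases this as $\k(\theta,\a)'_{\ol{1}}=\k'_{\ol{1}}$, you unwind it via PBW), check $\k(\theta,\a)'$-equivariance, and then compare both elements inside the one-dimensional space of $\k(\theta,\a)'$-invariants. The only difference is that the paper cites the one-dimensionality (Cor.~6.2 of \cite{sherman2021ghost}) directly, whereas you correctly reconstruct it from the triviality of $\Lambda^{\dim\p_{\ol{1}}}\p_{\ol{1}}$ under $\k(\theta,\a)_{\ol{0}}\sub\k_{\ol{0}}$.
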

\begin{proof}
	The fact that $\iota$ is an isomorphism follows from the fact that $\k(\theta,\a)'_{\ol{1}}=\k_{\ol{1}}'$.  It is clear that $\iota$ is $\k(\theta,\a)'$-equivariant, and so since $v_{\k'}$ is annihilated by $\k(\theta,\a)'$, by uniqueness (Cor. 6.2 of \cite{sherman2021ghost}) it is necessarily equal to $\iota(v_{\k(\theta,\a)})$ up to a nonzero scalar.
\end{proof}

\begin{cor}\label{cor HC v_k' comp reduction}
	There exists a nonzero scalar $c$ such that 
	\[
	HC(v_{\k(\theta,\a)})=cHC(v_{\k'}).
	\]
\end{cor}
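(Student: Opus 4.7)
The plan is to deduce the corollary directly from \cref{lemma reduction trick} by observing that the Harish-Chandra projection is essentially unchanged when we pass from the reduced pair $(\g(\theta,\a),\k(\theta,\a))$ up to the ambient pair $(\g,\k)$, provided we work with a common lift in $\UU\g(\theta,\a)\subset \UU\g$.

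First I would pick a lift $u\in\UU\k(\theta,\a)'$ of $v_{\k(\theta,\a)}$, which via the inclusion $\k(\theta,\a)'\subset\k'$ also sits inside $\UU\k'\subset\UU\g$, and likewise fix a lift $u'\in\UU\k'$ of $v_{\k'}$. \cref{lemma reduction trick} gives $\iota(v_{\k(\theta,\a)})=cv_{\k'}$ for some nonzero scalar $c$, which means $u-cu'\in\UU\k'\k_{\ol{0}}$. Since $\k_{\ol{0}}\subset\k$, this difference lies in $\UU\g\k_{\ol{0}}\subset\UU\g\k$, so $u$ and $cu'$ have the same image in $\UU\g/\UU\g\k$. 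Applying the $(\g,\k)$ Harish-Chandra map then yields $HC_{(\g,\k)}(u)=c\cdot HC(v_{\k'})$.

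Next I would verify that the Harish-Chandra image of $u$ is the same whether computed for the reduced pair $(\g(\theta,\a),\k(\theta,\a))$ or for the ambient pair $(\g,\k)$. This is a compatibility check: decomposing $u$ via the reduced Iwasawa decomposition as $u=p+w$ with $p\in S(\a)$ and $w\in\n(\theta,\a)\UU\g(\theta,\a)+\UU\g(\theta,\a)\k(\theta,\a)$, the containments $\n(\theta,\a)\subset\n$ and $\k(\theta,\a)\subset\k$ force $w\in\n\UU\g+\UU\g\k$. Thus both projections send $u$ to the same polynomial $p\in S(\a)$, and combining with the previous step we get $HC(v_{\k(\theta,\a)})=HC_{(\g,\k)}(u)=c\cdot HC(v_{\k'})$.

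The genuinely nontrivial input has already been absorbed into \cref{lemma reduction trick} (namely, comparing $v_{\k(\theta,\a)}$ and $v_{\k'}$ at the level of $\UU\k'/\UU\k'\k_{\ol{0}}$ via a uniqueness of invariants argument); so no real obstacle remains, only the bookkeeping verifying that the two Iwasawa projections are compatible on elements of $\UU\g(\theta,\a)$.
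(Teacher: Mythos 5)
Your proof is correct and, since the paper presents \cref{cor HC v_k' comp reduction} as an immediate corollary of \cref{lemma reduction trick} without further argument, it supplies exactly the bookkeeping the paper leaves implicit: pass to lifts, use $\UU\k'\k_{\ol{0}}\subset\UU\g\k$ to transfer the lemma's equality to $\UU\g/\UU\g\k$, and observe that the inclusions $\n(\theta,\a)\subset\n$ and $\k(\theta,\a)\subset\k$ make the two Harish--Chandra projections agree on $\UU\g(\theta,\a)$.
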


	The use of Corollary \ref{cor HC v_k' comp reduction} is that the pair $(\g(\theta,\a),\k(\theta,\a))$ is sometimes simpler than the pair $(\g,\k)$; however generally the pairs are the same, perhaps up to a central extension.  In the below table we list the three cases where we truly get a simplification; note that we add the center to $\g(\theta,\a)$ to simplify matters.

\renewcommand{\arraystretch}{2}
\begin{tabular}{|c|c|}
	\hline 
	$(\g,\k)$ & $(\g(\theta,\a),\k(\theta,\a))$ \\
	\hline
 $(\g\l(m|n)$, $\g\l(m-r|n)\times\g\l(r))$, \ $2r\leq m$ & $(\g\l(2r|n),\g\l(r|n)\times\g\l(r))$  \\
	\hline
	$(\o\s\p(m|2n)$,$\o\s\p(m-r|2n)\times\s\o(r))$, \ $2r\leq m$& $(\o\s\p(2r|2n),\o\s\p(r|2n)\times\s\o(r))$  \\
	\hline
	$(\o\s\p(m|2n)$,$\o\s\p(m|2n-2r)\times\s\p(2r))$, \ $2r\leq n$ & $(\o\s\p(m|4r),\o\s\p(m|2r)\times\o\s\p(2r))$  \\
	\hline
\end{tabular}

\subsection{Vanishing criteria}
\begin{prop}\label{iso_root_vanishing_general}
	Let $\alpha$ be a simple isotropic root such that $\theta\alpha\neq\alpha$ and $\theta\alpha+\alpha\notin\Delta$.  Then $h_{\alpha}|HC(D)$ for all $D\in\AA$.
\end{prop}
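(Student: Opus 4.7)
The plan is to establish the divisibility $h_\alpha \mid HC(D)$ by showing that $HC(D)$ vanishes on the hyperplane $H = \{\mu \in \a^* : (\mu,\alpha)=0\}$. Using the identity $D(f_\lambda) = HC(D)(-\lambda)$ from Section 2, together with Zariski density of $\Lambda^+ \cap H$ in $H$ (inherited from density of $\Lambda^+$ in $\a^*$, Sec.~5 of \cite{sherman2021spherical}), it will suffice to prove $D(f_\lambda) = 0$ for every $D \in \AA_{(\g,\k)}$ and every $\lambda \in \Lambda^+$ with $(\lambda,\alpha)=0$.

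The crucial step will be to show the function $y_\alpha f_\lambda \in \Bbbk[G/K]$ vanishes identically, where $y_\alpha := e_{-\alpha} - \theta e_{-\alpha} \in \p_{\ol{1}} \subset \k'$. I will split $y_\alpha f_\lambda = e_{-\alpha} f_\lambda - \theta e_{-\alpha} f_\lambda$ and treat the summands separately. Since $\alpha|_\a \neq 0$ (because $\theta\alpha \neq \alpha$), the Iwasawa convention forces $\theta\alpha$ to be a negative root of $\b$, so $\theta e_{-\alpha}$ has positive weight $-\theta\alpha$ and lies in $\n$, giving $\theta e_{-\alpha} f_\lambda = 0$. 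For $e_{-\alpha} f_\lambda$, the hypothesis $(\lambda,\alpha) = 0$ together with $\alpha$ being simple isotropic makes $e_{-\alpha} f_\lambda$ a $\b$-highest weight vector by a standard Verma computation ($[e_\beta, e_{-\alpha}] = 0$ for simple $\beta \neq \alpha$, and $[e_\alpha, e_{-\alpha}] f_\lambda = h_\alpha f_\lambda = (\lambda,\alpha) f_\lambda = 0$). If nonzero, it would then be a rational $\b$-eigenfunction of $\h$-weight $\lambda - \alpha$; but the fact from Sec.~6 of \cite{sherman2020iwasawa} that $-\theta$ has no fixed points on $S = \{\beta \in \Delta_{\ol{1}}^+ : \theta\beta \neq \beta\}$ gives $\theta\alpha \neq -\alpha$, whence $\alpha^{\mathfrak{t}} \neq 0$ and $\lambda - \alpha \notin \a^*$. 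Since all rational $\b$-eigenfunctions on $G/K$ have weights in $\Lambda \subset \a^*$, this forces $e_{-\alpha} f_\lambda = 0$.

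To deduce $D(f_\lambda) = 0$ from $y_\alpha f_\lambda = 0$, I will use the presentation of $\AA_{(\g,\k)}$ from \cite{sherman2021ghost}: every $D$ lifts as $v_{\k'} z$ with $z \in (\UU\g_{\ol{0}}/\UU\g_{\ol{0}}\k_{\ol{0}})^{\k_{\ol{0}}}$. Taking $\alpha$ as one of the $-\theta$-orbit representatives $\alpha_1,\dots,\alpha_k$ from the proof of Theorem 3.1, the top-exterior generator $v_{\k'}$ admits a lift to $\UU\k'$ of the form $y_\alpha w + u_0$, where $w$ is a PBW monomial in the remaining basis of $\p_{\ol{1}}$ and $u_0 \in \UU\k'\k_{\ol{0}}$. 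The $\k_{\ol{0}}$-invariance of $z$ ensures that $u_0 z$ lies in $\UU\g\k$, so $y_\alpha w z$ is itself a valid lift of $D$ to $\UU\g$. Because the $\UU\g$-action on $\Bbbk[G/K]$ is a map to $\DD(G/K)^{op}$, the leftmost factor of $y_\alpha w z$ acts first on $f_\lambda$: the function $(y_\alpha w z) f_\lambda = z(w(y_\alpha f_\lambda)) = z(w(0))$ vanishes identically on $G/K$, so $D(f_\lambda) = 0$.

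The main difficulty in the writeup will be the Step 3 bookkeeping: since $\UU\g\k$ is only a left ideal, $u_0 z$ with $u_0 \in \UU\k'\k_{\ol{0}}$ is not automatically in $\UU\g\k$, and one must commute $z$ past $u_0$ (using the $\k_{\ol{0}}$-invariance of $z$, which puts $[x,z] \in \UU\g_{\ol{0}}\k_{\ol{0}}$ for $x \in \k_{\ol{0}}$) to verify the congruence $v_{\k'} z \equiv y_\alpha w z \pmod{\UU\g\k}$. The weight argument in Step 2 also relies on the somewhat subtle freeness of $-\theta$ on $S$.
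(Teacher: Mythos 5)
Your overall strategy — showing $D(f_\lambda)=0$ for $\lambda$ on the wall $(\lambda,\alpha)=0$ by pulling $y_\alpha := e_{-\alpha}-\theta e_{-\alpha}$ out of a lift of $D$ — is genuinely different from the paper's argument, which works representation-theoretically inside the decomposition $V(\lambda)=I_{\k'}(\Bbbk)\oplus M$ using projectivity. Your Steps~1 and 2 are essentially fine (and your weight-lattice argument for $e_{-\alpha}f_\lambda=0$ is a clean alternative to invoking sphericity). But Step~3 has a genuine gap, and it is exactly the place where the hypothesis $\theta\alpha+\alpha\notin\Delta$ must enter.

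The problem is the claim that $v_{\k'}$ lifts to $\UU\k'$ in the form $y_\alpha w + u_0$ with $u_0\in\UU\k'\k_{\ol{0}}$. This would say that, in the quotient $\UU\k'/\UU\k'\k_{\ol{0}}\cong\Lambda\p_{\ol{1}}$, the class $v_{\k'}$ equals the image of the \emph{PBW monomial} $y_\alpha w$. But $v_{\k'}$ is not the image of a PBW monomial: it is the unique (up to scalar) $\k'$-\emph{invariant} of $\Lambda\p_{\ol{1}}$, which has the top-exterior-power class as its leading term but also carries nonzero lower-exterior-degree corrections (this is exactly the ``$+\,l.o.t.$'' in the expression $v_{\k'}=x_ky_k\cdots x_1y_1+l.o.t.$ in the proof of Theorem~3.1). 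Those corrections live in $\Lambda^{<\mathrm{top}}\p_{\ol{1}}$, not in $\UU\k'\k_{\ol{0}}$, and they are not of the form $y_\alpha\cdot(\text{something})$ for free. So $y_\alpha w z$ is \emph{not} a valid lift of $D$, and the conclusion $D(f_\lambda)=0$ does not follow.

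Notice that your argument as written never uses $\theta\alpha+\alpha\notin\Delta$. That is a warning sign: as the paper's remark after the proposition indicates, the conclusion fails without that hypothesis — e.g.\ for $(\o\s\p(m|2n),\o\s\p(m-1|2n))$ the simple isotropic root has $\alpha+\theta\alpha=-2\delta_1\in\Delta$, and the computed $HC(\AA_{(\g,\k)})$ (Section~9) does not vanish on the corresponding wall. The hypothesis is what makes $y_\alpha^2=0$ (since $[e_{-\alpha},\theta e_{-\alpha}]$ has weight $-\alpha-\theta\alpha$, which then cannot be a root), and $y_\alpha^2=0$ is what lets one conclude that the $\k'$-invariant $v_{\k'}$ — which you correctly observe lies in $\ker(y_\alpha)$ — actually lies in $\mathrm{im}(y_\alpha)$ (a Koszul/freeness argument over $\Lambda[y_\alpha]$, which is the same phenomenon the paper exploits via projectivity of $I_{\k'}(\Bbbk)$). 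If you patch Step~3 by invoking $y_\alpha^2=0$ and this Koszul argument to produce a lift $v_{\k'}=y_\alpha u\pmod{\UU\k'\k_{\ol{0}}}$, the rest of your computation goes through and you get a proof morally parallel to, but formally distinct from, the paper's.
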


\begin{proof}
Suppose that $\lambda\in\Lambda^+$ and that $(\lambda,\alpha)=0$ and $V(\lambda)$ contains a copy of $I_{\k'}( \Bbbk)$.  Then we may write $V(\lambda)=I_{\k'}( \Bbbk)\oplus M$ for some $\k'$-module $M$, and accordingly $f_{\lambda}=g+h$.  

Define $e_{-\ol{\alpha}}:=e_{-\alpha}-\theta e_{-\alpha}$.  Then $e_{-\alpha}f_{\lambda}=0$ by sphericity, so that $e_{-\ol{\alpha}}f_{\lambda}=0$; in particular $e_{-\ol{\alpha}}g=0$.  Under our conditions, $[e_{-\ol{\alpha}},e_{-\ol{\alpha}}]=0$, and so by projectivity we must have $g=e_{-\ol{\alpha}}g'$ for some $g'\in I_{\k'}( \Bbbk)$.  However since $g$ generates $I_{\k'}( \Bbbk)$ this is impossible, so we obtain a contradiction.
\end{proof}

\begin{remark}
	An interesting (albeit unfortunate) caveat to \cref{iso_root_vanishing_general} is that the hypothesis almost never holds(!).  The only supersymmetric pairs for which it does hold are $(\g\l(m|n),\g\l(m-r|n-s)\times\g\l(r|s))$, and for certain isotropic roots of the pairs $(\o\s\p(m|2n),\o\s\p(r|2s)\times\o\s\p(m-r|2n-2s))$.  
	
	This is in great contrast to the classical, even setting where it is always true that $\alpha+\theta\alpha$ is not a root.  
\end{remark}

\subsection{A trick for certain pairs}

For each of the Lie superalgebras we consider the following decompositions $\g_{\ol{0}}=\bigoplus\limits_i\g_{\ol{0}}^i$: if $\g_{\ol{0}}$ is semisimple, let $\g_{\ol{0}}^i$ be its simple components; if $\g=\o\s\p(2|2n)$, let $\g_{\ol{0}}^1=\s\o(2)$ and $\g_{\ol{0}}^2=\s\p(2n)$, and finally if $\g=\g\l(m|n)$ then let $\g_{\ol{0}}^1=\g\l(m)$ and $\g_{\ol{0}}^2=\g\l(n)$.

Then in each case we obtain a corresponding decomposition $\h=\bigoplus\limits_i\h_i$ of the Cartan subalgebra, where $\h_i\sub\g_{\ol{0}}^i$ will be a Cartan subalgebra; correspondingly we obtain decompositions of $\h^*$.  Now write $\ZZ_{\g},\ZZ_{\g^i_{\ol{0}}}$ for the centers of $\UU\g,\UU\g^i_{\ol{0}}$ respectively.  Then we have identifications $HC(\ZZ_{\g})=S(\h)^{\mathcal{W}_\rho}$ and $HC(\ZZ_{\g_{\ol{0}}^i})\sub S(\h^i)^{W^i_{\rho_i}}$, where $\mathcal{W}$ is the Weyl groupoid of $\g$, $W_i$ is the Weyl group of $\g_{\ol{0}}^i$, and $\rho$, resp. $\rho_i$ is the Weyl vector of $\g$, resp. $\g^i_{\ol{0}}$.  Here the subscripts indicate that we are taking the $\rho$ or $\rho_i$ shifted actions.

Now suppose that $(\g,\k)$ is a supersymmetric pair satisfying the Iwasawa decomposition and which has that $\a\sub\h^i$ for some $i$.  Then we have the following commutative diagram: 
\[
\xymatrix{\ZZ_{\g}\ar[d]^{HC} & & \ZZ_{\g_{\ol{0}}^i}\ar[dd]^{HC} \\ S(\h)^{\mathcal{W}_{\rho}} \ar[d]^p \ar[r] & S(\h)^{\mathcal{W}}\ar[d]^p & \\ S(\h_i)^{W^i_{p(\rho)}} \ar[d]^q \ar[r] & S(\h_i)^{W^i}\ar[d]^q & S(\h_i)^{W^i_{\rho_i}} \ar[l]\ar[d]^q \\
	S(\a)^{W^{lit}_{qp(\rho)}} \ar[r] & S(\a)^{W^{lit}} & S(\a)^{W^{lit}_{q(\rho^i)}}\ar[l] }.
\]
The maps $p$ and $q$ are projection maps onto subspaces; all horizontal arrows are pullbacks under translation by the appropriate vector, and are obviously isomoprhisms.  Finally, $W^{lit}$ is the little Weyl group of the supersymmetric space.  By Helgason's theorem (\cite{helgason1992some}), the composite map $\ZZ_{\g_{\ol{0}}^i}\to S(\h_i)^{W^i_{\rho_i}}\to S(\a)^{W^{lit}_{q(\rho^i)}}$ is a surjective map for these pairs.  It follows that if the map $p:\ZZ_{\g}\to S(\h_i)^{W^i_{p(\rho)}}$ is surjective, then so is the map $qp:\ZZ_{\g}\to S(\a)^{W^{lit}_{qp(\rho)}}$, and in particular we would have that the natural map
\[
\ZZ_{\g}\to\ZZ_{G/K}
\]
is surjective in these cases.  The following proposition explains when this occurs.  

\begin{prop}\label{prop center surjective}
	The map $\ZZ_{\g}\to S(\h_i)^{W^i_{p(\rho)}}$ is surjective if and only if we are in one of the following cases:
		\begin{itemize}
			\item $\g=\g\l(m|n),\o\s\p(2m+1|2n),\d(1|2;\alpha)$, $\g(1|2)$; 
			\item $\g=\o\s\p(2m|2n)$ with $\g_{\ol{0}}^i=\s\p(2n)$; 
			\item $\g=\a\b(1|3)$ with $\g_{\ol{0}}^i=\s\l(2)$.
		\end{itemize}
\end{prop}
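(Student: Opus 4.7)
The plan is to reduce to the following case-by-case question: for each $\g$ and each choice of even factor $\g_{\ol{0}}^i$, does the restriction of the Harish-Chandra image to $\h_i$ surject onto $S(\h_i)^{W^i}$? Since the $W^i_{p(\rho)}$-action on $S(\h_i)$ is conjugate to the unshifted $W^i$-action via translation by $p(\rho)$, surjectivity of $\ZZ_\g\to S(\h_i)^{W^i_{p(\rho)}}$ is equivalent to surjectivity of the corresponding unshifted composite. I therefore work throughout with the algebra $I(\g,\h)\subset S(\h)$ of Sergeev--Veselov supersymmetric polynomials, which realizes $HC(\ZZ_\g)$ after the $\rho$-shift, and ask when $p(I(\g,\h)) = S(\h_i)^{W^i}$.

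For the classical superalgebras I would use Sergeev's explicit generators of $I(\g,\h)$: super power sums of the form $\sum \epsilon_i^k \pm \sum \delta_j^k$ for $\g\l(m|n)$, and even super power sums $\sum \epsilon_i^{2k}-\sum \delta_j^{2k}$ for $\o\s\p(2m+1|2n)$ and $\o\s\p(2m|2n)$. Setting the complementary variables to zero, these restrict to ordinary (even) power sums in $\h_i$, which generate the $S_m$-, $W(B_m)$-, or $W(C_m)$-invariants by Newton's identities; surjectivity then follows in the type-$A$, type-$B$, and type-$C$ factor cases. For the excluded classical cases (namely $\o\s\p(2m|2n)$ with $\g_{\ol{0}}^i=\s\o(2m)$), the key observation is that the above generators are manifestly even in each $\epsilon_i$, forcing $p(I(\g,\h))\subseteq S(\h_i)^{W(B_m)}\subsetneq S(\h_i)^{W(D_m)}$, with the Pfaffian $\epsilon_1\cdots\epsilon_m$ as the missing witness. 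The degenerate $m=1$ case $\o\s\p(2|2n)$ with $\s\o(2)$ is handled identically, since $\Bbbk[\epsilon]\supsetneq\Bbbk[\epsilon^2]$.

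The remaining cases are the exceptional superalgebras $\mathfrak{d}(1|2;\alpha)$, $\g(1|2)$, $\a\b(1|3)$, which I would treat by direct calculation using known low-degree Casimirs. For $\mathfrak{d}(1|2;\alpha)$ each of the three $\s\l(2)$-factors needs only a single quadratic generator, easily produced from the quadratic Casimir. For $\g(1|2)$ one must exhibit one generator on the $\s\l(2)$-factor and two independent generators on the $G_2$-factor (of degrees $2$ and $6$); the defect-one Sergeev--Veselov condition still leaves enough room in $I(\g,\h)$ for degree-$2$ and degree-$6$ Casimirs projecting to these. For $\a\b(1|3)$ one must both verify the $\s\l(2)$ case and rule out the $\s\o(7)$ case; in the latter, I would count independent polynomials in $I(\g,\h)$ in each degree up to $6$ and compare to the Chevalley generators of $S(\h_i)^{W(B_3)}$, showing that the single isotropic-root vanishing condition imposes a linear relation that cuts out one of the three needed generators.

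The main obstacle I expect is this last step: the non-surjectivity claim for $\a\b(1|3)$ on its $\s\o(7)$-factor cannot be proved by the clean ``evenness'' argument available in the classical $D$-type cases and will require a careful Hilbert-series comparison, while producing the degree-$6$ generator for $\g(1|2)$ on its $G_2$-factor in the positive direction is not an immediate consequence of the supersymmetric power-sum recipe and may require a nontrivial combination of Casimirs tailored to that superalgebra.
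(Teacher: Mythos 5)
Your approach is essentially the paper's: a case-by-case check using Sergeev's description of $S(\h)^{\mathcal{W}}$ for each basic classical $\g$, then restricting to $\h_i$ and comparing against the Chevalley generators of $S(\h_i)^{W^i}$. The $\g\l(m|n)$ and $\o\s\p(2m+1|2n)$ cases are handled exactly as you indicate (super power sums restrict to ordinary power sums). However, there is a genuine gap in your treatment of $\o\s\p(2m|2n)$ on the $\s\o(2m)$-factor. You argue that the generators of $S(\h)^{\mathcal{W}}$ are ``manifestly even in each $\epsilon_i$,'' forcing the image into $S(\h_i)^{W(B_m)}$ and thus missing the Pfaffian. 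This is false as stated: by Sergeev's theorem, $S(\h)^{\mathcal{W}}$ for $\o\s\p(2m|2n)$ is \emph{not} generated by even super power sums alone. Every invariant has the form
\[
f_0 + f_1\,\epsilon_1\cdots\epsilon_m\prod_{i,j}(\epsilon_i^2-\delta_j^2),
\]
with $f_0$ in the subalgebra generated by $\sum\epsilon_i^{2k}-\sum\delta_j^{2k}$ and $f_1\in S(\h)^W$; the second term is \emph{not} even in each $\epsilon_i$, so your evenness constraint does not hold for the whole invariant ring. The correct argument (as in the paper) is a degree argument: upon restricting ($\delta_j\mapsto 0$), the alternating term becomes divisible by $\epsilon_1^{2n+1}\cdots\epsilon_m^{2n+1}$, hence has degree at least $m(2n+1) > m$, so the degree-$m$ Pfaffian $\epsilon_1\cdots\epsilon_m$ cannot appear in the image.

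A similar subtlety governs the exceptional cases you flag as potential obstacles. For $\g(1|2)$ on the $G_2$-factor and for $\a\b(1|3)$ on the $\s\o(7)$-factor, the paper does not count dimensions via Hilbert series; instead it again invokes Sergeev's explicit structure of $S(\h)^{\mathcal{W}}$ as $f_0 + (\text{fixed alternating factor})\cdot f_1$ with $f_0$ in an explicitly described small subalgebra. For $\g(1|2)$ one then directly verifies the degree-$2$ and degree-$6$ $G_2$-generators both appear upon restriction of $f_0$. For $\a\b(1|3)$ on $\s\o(7)$, the restriction of $f_0$ lands in $\C[L_2|_{\h_i},L_6|_{\h_i}]$ and the alternating term restricts to something of degree at least $8$, so the degree-$4$ Chevalley generator $\epsilon_1^4+\epsilon_2^4+\epsilon_3^4$ is missing. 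So the ``Hilbert-series comparison'' you anticipate is unnecessary; the same degree filtering that resolves the $D$-type case also resolves these, once one uses the full structure theorem rather than only the power-sum generators.
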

\begin{proof}
	The proof is done case by case; clearly our question is equivalent to when the map $p:S(\h)^{\mathcal{W}}\to S(\h_1)^{W^1}$ is surjective.  In \cite{sergeev1999invariant}, generators for $S(\h)^{\mathcal{W}}$ were described for each of the Lie superalgebras we consider. 

	\begin{enumerate}
		\item[(i)] $\g=\g\l(m|n)$: generators for $S(\h)^{\mathcal{W}}$
		\[
		\sum\limits_i \epsilon_i^k-\sum\limits_j\delta_j^k, \ \ \ k\in\N.
		\]
		Generators for $\g\l(m)$:
		\[
		\sum\limits_i\epsilon_i^k, \ \ \ k\in\N.
		\]
		\item[(ii)] $\g=\o\s\p(2m+1|2n)$: generators for $S(\h)^{\mathcal{W}}$:
		\[
		\sum\limits_i\epsilon_i^{2k}-\sum\limits_j\delta_j^{2k}.
		\]
		Generators for $\s\o(2m+1)$:
		\[
		\sum\limits_{i}\epsilon_i^{2k}.
		\]
		Generators for $\s\p(2n)$:
		\[
		\sum\limits_i\delta_i^{2k}.
		\]
		\item[(iii)] $\g=\o\s\p(2m|2n)$: every element of $S(\h)^{\mathcal{W}}$ is of the form:
		\[
		f_o+f_1\epsilon_1\cdots\epsilon_m\prod\limits_{i,j}(\epsilon_i^2-\delta_j^2),
		\]
		where $f_1\in S(\h)^{W}$ and $f_0$ lies in the subalgebra generated by the polynomials
		\[
		\sum\limits_i\epsilon_i^{2k}-\sum\limits_j\delta_j^{2k}.
		\]
		Thus we obtain surjectivity on the component with $\g_{\ol{0}}^i=\s\p(2n)$, but the component with $\g_{\ol{0}}^i=\s\o(2m)$ has one generator given by $\epsilon_1\cdots\epsilon_m$, and this will not be in the image.

		\item[(iv)] $\g=\mathfrak{d}(1|2;\alpha)$: let $\lambda_1=-(1+\alpha)$, $\lambda_2=1$, and $\lambda_3=\alpha$.  Then $S(\h)^{\mathcal{W}}$ contains the polynomial 
		\[
		\frac{1}{\lambda_1}\epsilon_1^2+\frac{1}{\lambda_2}\epsilon_2^2+\frac{1}{\lambda_3}\epsilon_3^2,
		\]
		so its restriction to any component $\h_i$ will generate $S(\h_i)^{W^i}$.
		
		\item[(v)] $\g=\g(1|2)$: every element of $S(\h)^{\mathcal{W}}$ is of the form
		\[
		f_0+\prod\limits_{1\leq i\leq 3}(\delta_i^2-\epsilon_i^2)f_1,
		\]
		where $f_0\in \Bbbk[3\delta^2-2(\epsilon_1^2+\epsilon_2^2+\epsilon_3^2)]$, and $f_1\in S(\h)^{W}$.  If $\g_{\ol{0}}^i=\s\l(2)$ then we clearly obtain a surjective map, and if $\g_{\ol{0}}^i=\g(2)$, then generators for $S(\h^i)^{W^i}$ are given by
		\[
		\epsilon_1^2+\epsilon_2^2+\epsilon_3^2, \ \ \ \epsilon_1^2\epsilon^2\epsilon^2.
		\]
		We see these are in the image of the restriction, so we again get surjectivity.
		
		\item[(vi)] $\g=\a\b(1|3)$: every element of $S(\h)^{\mathcal{W}}$ is of the form
		\[
		f_0+f_1\prod(\delta\pm\epsilon_1\pm\epsilon_2\pm\epsilon_3)
		\]
		where $f_1\in S(\h)^{W}$ and $f_0\in\C[L_2,L_6]$ where $L_2=\delta^2-2(\epsilon_1^2+\epsilon^2+\epsilon_3^2)$, and $L_6$ is a homogeneous degree 6 polynomial.  It follows that we get surjectivity for the component with $\g_{\ol{0}}^i=\s\l(2)$, while for the component with $\g_{\ol{0}}^i=\s\o(7)$ we do not get the degree 4 generator $\epsilon_1^4+\epsilon_2^4+\epsilon_3^4$, and thus we do not get surjectivity onto this component.
		
	\end{enumerate}
\end{proof}

We obtain the following application of \cref{prop center surjective}.  

\begin{thm}\label{thm even pairs}
	Suppose that $(\g,\k)$ is one of the following pairs:
	\[
	(\g\l(m|n),\g\l(m-r|n)\times\g\l(r)), \ m\geq 2r, \ \ \ (\o\s\p(2m+1|2n),\o\s\p(2m+1-r|2n)\times\s\o(r)), \ 2m+1\geq 2r,
	\]
	\[
	(\o\s\p(m|2n),\o\s\p(m|2n-2s)\times\s\p(2s)), \ 2n\geq 4s, \ \ \ (\g(1|2),\mathfrak{d}(1|2,3)).
	\]
	Then $\ZZ_{\g}\to\ZZ_{G/K}$ is surjective, and 
	\[
	HC(\AA_{G/K})=S(\a)^{W^{lit}_{\ol{\rho}}}\cdot HC(v_{\k'}),
	\]
	where $S(\a)^{W^{lit}_{\ol{\rho}}}$ is the invariance of $S(\a)$ under the $\ol{\rho}$-shifted action of the little Weyl group $W^{lit}$, and $\ol{\rho}$ is the restriction of $\rho$ to $\a$.
\end{thm}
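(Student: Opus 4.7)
The plan is to prove the two assertions in turn: first that $\ZZ_{\g}\onto\ZZ_{G/K}$, and then to identify $HC(\AA_{G/K})$ via a graded comparison.

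For the surjectivity, I will go through the four families case by case and verify each falls under \cref{prop center surjective}. Specifically, for each family I identify the component $\g_{\ol{0}}^i\sub\g_{\ol{0}}$ containing the Cartan subspace $\a$ and check that $(\g,\g_{\ol{0}}^i)$ is among the allowed pairs in the proposition: families (1), (2), (4) fall under $\g=\g\l(m|n),\o\s\p(2m+1|2n),\g(1|2)$ respectively (all unrestricted); for family (3) with $m$ odd we are again in the $\o\s\p(2m'+1|2n)$ case, while for $m$ even the Cartan lies in the distinguished $\s\p(2n)$-component of $\o\s\p(2m|2n)$ singled out by the proposition. Combining this with classical Helgason for the even pair $(\g_{\ol{0}},\k_{\ol{0}})$ and chasing the commutative diagram preceding \cref{prop center surjective}, the composite $\ZZ_{\g}\to S(\a)$ has image exactly $S(\a)^{W^{lit}_{\ol{\rho}}}$ (noting $qp(\rho)=\ol{\rho}$). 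Since the Alldridge description recorded in the remark of \cref{sec_intro_conj} gives $HC(\ZZ_{G/K})\sub S(\a)^{W^{lit}_{\ol{\rho}}}$, and this subspace also contains the image of $\ZZ_{\g}$, injectivity of $HC$ on $\ZZ_{G/K}$ (\cref{rmk injectivity alldridge}) forces equality throughout, yielding both the surjectivity and the identification $HC(\ZZ_{G/K})=S(\a)^{W^{lit}_{\ol{\rho}}}$.

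For the inclusion $\supseteq$ in the identification of $HC(\AA_{G/K})$, I will use the $\ZZ_{G/K}$-module structure on $\AA_{G/K}$ from Section~5.1: for any $z\in\ZZ_{G/K}$, $z\cdot v_{\k'}\in\AA_{G/K}$, and the commutative diagram of that subsection gives $HC(z\cdot v_{\k'})=HC(z)\cdot HC(v_{\k'})$; letting $z$ vary, $S(\a)^{W^{lit}_{\ol{\rho}}}\cdot HC(v_{\k'})\sub HC(\AA_{G/K})$. For the reverse inclusion I will compare associated gradeds with respect to the halved degree filtration of \cref{thm_injectivity}. Any $u\in\AA_{G/K}$ arises via the $v_{\k'}\cdot z_0$ isomorphism of Section~2.2 as $v_{\k'}z_0+\mathrm{l.o.t.}$ for some $z_0\in(\UU\g_{\ol{0}}/\UU\g_{\ol{0}}\k_{\ol{0}})^{K_0}$, and \cref{cor highest order term} computes the top symbol of $HC(u)$ as $(-1)^{k}h_{\ol{\alpha_k}}\cdots h_{\ol{\alpha_1}}\cdot(\mathrm{top}\,HC(z_0))$. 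Since classical Helgason identifies the top symbols of $HC(z_0)$, as $z_0$ varies, with all of $S(\a)^{W^{lit}}$ (the $\rho_{\ol{0}}$-shift is invisible at the top level), we obtain $\mathrm{gr}\,HC(\AA_{G/K})=h_{\ol{\alpha_k}}\cdots h_{\ol{\alpha_1}}\cdot S(\a)^{W^{lit}}$. The same argument applied to $S(\a)^{W^{lit}_{\ol{\rho}}}\cdot HC(v_{\k'})$ -- using that $HC(v_{\k'})$ has top symbol $(-1)^{k}h_{\ol{\alpha_k}}\cdots h_{\ol{\alpha_1}}$ by \cref{cor highest order term} with $z_0=1$, and that top parts of $S(\a)^{W^{lit}_{\ol{\rho}}}$ fill out $S(\a)^{W^{lit}}$ -- produces the same graded module. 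Combined with the $\supseteq$ inclusion already established, equality of the associated gradeds forces equality of the two filtered subspaces of $S(\a)$.

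The main obstacle will be the careful bookkeeping needed to translate \cref{cor highest order term} into a clean top-symbol statement for $HC$ acting on $\AA_{G/K}$, and to reconcile the two $\rho$-shifts appearing on the two sides: one from classical Helgason for the even pair ($\rho_{\ol{0}}$) and one from the supersymmetric side (Alldridge's $\ol{\rho}$). Both disappear at the level of top symbols, but one must verify that every homogeneous $W^{lit}$-invariant actually arises as a top symbol on each side, which is routine but essential for the graded comparison to close.
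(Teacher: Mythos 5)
Your proposal is correct and follows the same essential strategy as the paper: deduce surjectivity of $\ZZ_\g\to\ZZ_{G/K}$ from \cref{prop center surjective} and the diagram preceding it, use the $\ZZ_{G/K}$-module structure from Section~5.1 to get $S(\a)^{W^{lit}_{\ol\rho}}\cdot HC(v_{\k'})\sub HC(\AA_{G/K})$, and then close the argument with a degree count using \cref{cor highest order term}, classical Helgason, and injectivity of $HC$. The only cosmetic difference is packaging: the paper fixes the filtered dimension of $HC(\AA_{G/K})$ using that $\omega(z)=HC(v_{\k'}z)$ is a bijection onto it together with the degree formula, and compares with the filtered dimension of $m(\ZZ_\g)$; you instead compare associated gradeds, using $\omega$ implicitly through the isomorphism $z\mapsto v_{\k'}z$ and then reading off the top symbol from \cref{cor highest order term}. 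These are equivalent, and your graded formulation is if anything cleaner about exactly where the $\rho$-shifts disappear. Your case-by-case check of which component of $\g_{\ol 0}$ the Cartan subspace sits in for each family is also right, including the even/odd split for $\o\s\p(m|2n)$.
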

\begin{proof}
	The statement that $\ZZ_{\g}\to\ZZ_{G/K}$ is surjective is a direct consequence of \cref{prop center surjective} and the diagram above it.  For the second statement, we have maps:
	\[
	\xymatrix{HC(\ZZ_{\g}) \ar[dr]^{m} & \\
		\ZZ_{G_0/K_0} \ar[r]^{\omega} & HC(\AA_{G/K})}
	\]
	where $m(z)=HC(z)HC(v_{\k'})$, and $\omega(z)=HC(v_{\k'}z)$.  Of course $\omega$ is an isomorphism by construction; on the other hand, by \cref{cor highest order term}, $\deg HC(v_{\k'}z)=\deg HC(z)\cdot\dim \p_{\ol{1}}/2$.  Of course $m$ is also injective, and we have $\deg m(z)=\deg HC(v_{\k'})\deg HC(z)$. 
	
	Now by \cref{prop center surjective} and the diagram above it, the subspaces $\{z\in\ZZ_{G_0/K_0}:\deg HC(z)\leq r\}$ and $\{z\in\ZZ_{G/K}:\deg HC(z)\leq r\}$ have the same dimension.  Therefore the image of $m$ must be equal to the image of $\omega$, implying that $m$ is also surjective.
		
	By the diagram above \cref{prop center surjective} we have that $HC(\ZZ_{G/K})=S(\a)^{W^{lit}_{\ol{\rho}}}$, finishing the proof.
\end{proof}

\section{Rank 1 Computations}

In this section we compute $HC(\AA_{(\g,\k)})$ for all supersymmetric pairs that we consider of rank 1, i.e.~ in which $\dim\a=1$.  This includes the following pairs:
\[
(\o\s\p(1|2)\times\o\s\p(1|2),\o\s\p(1|2)), \ \ \ (\s\l(1|1)\times\s\l(1|1),\s\l(1|1)),
\]
\[
(\g\l(m|n),\g\l(m-1|n)\times\g\l(1)), \ m\geq2, \ \ \ (\o\s\p(m|2n),\o\s\p(m-1|2n)), \  m\geq 2,
\]
\[
(\o\s\p(m|2n),\o\s\p(m|2n-2)\times\s\p(2)), \ n\geq 2.
\]
We include the first two pairs because they are also of rank one. It is possible that a general formula for $HC(\AA_{(\g,\k)})$ could be proven by reduction to the rank one case, in which case the rank one pairs will be especially important, and these diagonal ones may appear in the process.  Notice that $(\s\l(1|2),\o\s\p(1|2))\cong(\o\s\p(2|2),\o\s\p(1|2))$, so it is also included.

In the following, for the diagonal pairs $(\g\times\g,\g)$ we present the answer in $S(\mathfrak{h})$ for a Cartan subalgebra $\mathfrak{h}\sub\g$, since this is equivalent.

\begin{thm} We have the following computation of $HC(\AA_{(\g,\k)})$:
	\begin{enumerate}
		\item[(i)] $(\o\s\p(1|2)\times\o\s\p(1|2),\o\s\p(1|2))$: let $\delta(h_{\delta})=1$; then
		\[
		HC(\AA_{(\g,\k)})=(h_{\delta}+\frac{1}{2}) \Bbbk[\mathfrak{h}]^{W_{\rho}}.
		\]
		\item[(ii)] $(\s\l(1|1)\times\s\l(1|1),\s\l(1|1))$: let $h_{\alpha}$ be a coroot of the odd isotropic root; then
		\[
		HC(\AA_{(\g,\k)})=h_{\alpha} \Bbbk[\mathfrak{h}].
		\]
		\item[(iii)] $(\g\l(m|n),\g\l(m-1|n)\times\g\l(1))$: let $t=\frac{1}{2}h_{\epsilon_1-\epsilon_m}$; then
		\[
		HC(\AA_{(\g,\k)})= \Bbbk[t(t-n+m-1)]\langle t(t-1)\cdots(t-(n-1))\rangle.
		\]
		\item[(iv)] $(\o\s\p(2|2n),\o\s\p(1|2n))$: let $t=h_{\epsilon_1}$, where $\epsilon_1(h_{\epsilon_1})=1$; then 
		\[
		HC(\AA_{(\g,\k)})=\{p\in \Bbbk[t]:p(n+r)=(-1)^{r}p(n-r):1\leq r\leq n\},
		\]
		or more explicitly:
		\[
		\Bbbk[t(t-2n)]\langle (t-1)(t-3)\cdots(t-(2n-1)),t(t-2)\cdots(t-2n)\rangle.
		\]
		\item[(v)] $(\o\s\p(m|2n),\o\s\p(m-1|2n))$, $m\geq3$: let $t=h_{\epsilon_1}$; then
		\[
		HC(\AA_{(\g,\k)})= \Bbbk[t(t-2n+m-2)]\langle(t-1)(t-3)\cdots(t-(2n-1))\rangle.
		\]
		\item[(vi)] $(\o\s\p(m|2n),\o\s\p(m|2n-2)\times\s\p(2))$, $n\geq 2$: let $t\in\a$ be such that $(\delta_1+\delta_2)(t)=1$; then
		\[
		HC(\AA_{(\g,\k)})= \Bbbk[t(t+2n-m-1)]\langle (t+1)t(t-1)\cdots(t-(m-2))\rangle.
		\]
	\end{enumerate}
\end{thm}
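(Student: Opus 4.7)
The plan is to prove each of the six cases separately but within a uniform framework. Throughout, the injectivity of $HC$ (\cref{thm_injectivity}) identifies $\AA_{(\g,\k)}$ with an explicit $\Bbbk$-subspace of $S(\a) = \Bbbk[t]$, and the isomorphism $z \mapsto v_{\k'} z$ from $\ZZ_{G_0/K_0}$ onto $\AA_{(\g,\k)}$ parametrizes that subspace. \Cref{cor highest order term} gives the exact leading term $(-1)^k h_{\ol{\alpha_k}}\cdots h_{\ol{\alpha_1}} HC(z)$ of $HC(v_{\k'}z)$, so the degree and leading coefficient of each element are immediate; the remaining task is to pin down the lower-order terms.

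Cases (iii), (vi), and (v) with $m$ odd fall under \cref{thm even pairs}, which produces the clean cyclic description $HC(\AA_{(\g,\k)}) = S(\a)^{W^{lit}_{\ol{\rho}}} \cdot HC(v_{\k'})$; the shifted Weyl invariants recover the $\Bbbk[t(t+c)]$ factor in the theorem statement, so only the single polynomial $HC(v_{\k'})$ remains to determine. With its degree $\tfrac{1}{2}\dim\p_{\ol{1}}$ and leading coefficient already known, I would identify its roots via two ingredients: \cref{iso_root_vanishing_general} (divisibility by $h_\alpha$ for simple odd isotropic roots $\alpha$ with $\theta\alpha\neq\alpha$ and $\alpha+\theta\alpha\notin\Delta$), which by the remark following it applies fully to case (iii) and selectively to cases (v) and (vi); and \cref{lemma branching} combined with \cref{cor_nec_condition_extns}, which force $HC(v_{\k'})(-\lambda) = 0$ at atypical $\lambda\in\Lambda^+$ for which $V(\lambda)$ is reducible and the copy of $I_{K'}(\Bbbk)$ lies in a subquotient unreachable from $f_\lambda$. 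For case (v) with $m$ even the pair is not interlaced and \cref{thm even pairs} fails, but \cref{cor HC v_k' comp reduction} identifies $HC(v_{\k'})$ with its counterpart in the reduced pair $(\o\s\p(2|2n),\o\s\p(1|2n))$, i.e.~case (iv); the cyclic module structure then follows from a dimension count matching the degree growth of $HC(v_{\k'} z)$ as $z$ ranges over $\ZZ_{G_0/K_0}$ against the stated cyclic module.

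Case (iv) is a special pair in the sense of Section 7, hence interlaced, so \cref{ghost algebra prop} yields $HC(\AA)^2 \subseteq HC(\ZZ)$; combined with the Weyl reflection induced by the interlacing automorphism, this constrains $HC(\AA)$ to be a ``square root'' of the Alldridge conditions cutting out $HC(\ZZ)$. In rank one this produces exactly two independent generators, one of degree $n$ given by $HC(v_{\k'})$ and one of degree $n+1$ given by $HC(v_{\k'} t)$ where $t$ generates the $\s\o(2)$-Cartan. Their vanishing at odd versus even integers in $[0,2n]$, coming from atypical branching of highest-weight $\o\s\p(2|2n)$-modules under restriction to $\o\s\p(1|2n)$, both identifies the explicit polynomials in the theorem statement and ensures their $\Bbbk[t(t-2n)]$-module independence. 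The diagonal cases (i) and (ii) reduce respectively to Gorelik's ghost-centre computation for $\o\s\p(1|2)$ and a direct $\UU\s\l(1|1)$-calculation.

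The main obstacle is the explicit determination of $HC(v_{\k'})$ in the $\o\s\p$ cases, where the combinatorics of branching atypicality is delicate. The cleanest strategy I foresee is to guess each $HC(v_{\k'})$ from the pattern predicted by \cref{conj_intro}, verify via \cref{lemma branching} that it vanishes at a sufficient sequence of atypical weights to account for all its roots by a dimension count, and then pin down the remaining scalar using \cref{cor highest order term}.
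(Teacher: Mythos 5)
Your framing — injectivity of $HC$, the isomorphism $z\mapsto v_{\k'}z$, the leading term from \cref{cor highest order term}, \cref{thm even pairs} for the module structure, and then pinning down $HC(v_{\k'})$ by its zeros — is the right skeleton, and cases (i), (ii), and (iii) via \cref{lemma reduction trick} and the explicit $\g\l$ calculation are in line with the paper. But the hard cases (iv), (v), (vi) are not adequately handled, and two specific claims don't survive scrutiny.

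First, the invocation of \cref{iso_root_vanishing_general} for cases (v) and (vi) is overstated. For case (v) with involution $\epsilon_1\leftrightarrow-\epsilon_1$, every odd root $\alpha=\epsilon_1\pm\delta_j$ has $\alpha+\theta\alpha=\pm 2\delta_j\in\Delta$, so the hypothesis of \cref{iso_root_vanishing_general} always fails and the criterion gives nothing. For case (vi) it applies at best to a fraction of the isotropic roots, nowhere near enough to account for the $m$ distinct roots $\{-1,0,\dots,m-2\}$ of $HC(v_{\k'})$. The paper instead establishes the vanishing in case (vi) by an entirely different route: it shows that $I_{\k'}(\Bbbk)$ does not appear in low tensor powers of the standard $\o\s\p(m|2)$-module, citing the Ehrig--Stroppel and Comes--Heidersdorf classifications of injective summands in tensor powers of $\Bbbk^{m|2n}$, then completes the root set using the $W_\rho$-invariance of $HC(v_{\k'})^2$ and a degree count. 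The step "verify via \cref{lemma branching} that it vanishes at a sufficient sequence of atypical weights" is exactly where the work lies and cannot be shortcut: you need to actually \emph{prove} non-appearance of the injective, which is the content of those external branching theorems.

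Second, for case (iv) the "square root of the Alldridge conditions plus the Weyl reflection from interlacing" is too coarse to produce the crucial sign $(-1)^r$. The paper obtains the relation $HC(D)(n+r)=(-1)^{r}HC(D)(n-r)$ by a concrete lowering-operator computation: an explicit $\omega\in S^2\n^-$ with $\omega^rf_{n+r}$ proportional to $f_{n-r}$, combined with the fact that the interlacing automorphism $\phi$ sends $\omega^r\mapsto(-1)^r\omega^r$ while permuting $\k\leftrightarrow\k'$. The interlacing symmetry alone only tells you $HC(\AA)^2\subseteq HC(\ZZ)$, i.e.\ that the zero set is $W_\rho$-stable and roots come in pairs; it does not distinguish the $+$ and $-$ sign conditions, which is precisely what makes the two generators $(t-1)(t-3)\cdots(t-(2n-1))$ and $t(t-2)\cdots(t-2n)$ independent over $\Bbbk[t(t-2n)]$. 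Finally, a small remark on case (v): the pair $(\o\s\p(m|2n),\o\s\p(m-1|2n))$ with $m\geq 3$ is not interlaced for \emph{any} parity of $m$ (the pair $(\g,\k')$ has no Iwasawa decomposition), and the paper handles both parities uniformly by \cref{lemma reduction trick}; the parity distinction you draw is not the relevant one, though it doesn't affect correctness because \cref{thm even pairs} is not actually needed once $HC(v_{\k'})$ is in hand and a degree count on the $HC(\ZZ_{G/K})$-module structure is used.
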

The proofs of (i) and (ii) follow from \cite{gorelik2000ghost}.  The proofs for the rest of the pairs will occupy the rest of the section; we will go through each case individually, using different techniques.  
%\begin{remark}
%	We claim that the above cases have the following simplified description when the pair is interlaced (excluding $(\s\l(1|1)\times\s\l(1|1),\s\l(1|1))$: in each case the reduced root system consists of roots $\{\pm\alpha,\pm2\alpha\}$ with $(\alpha,\alpha)\neq0$ and $\operatorname{dim}\g_{\alpha}=(m_1|n_1)$ and $\operatorname{dim}\g_{2\alpha}=(m_2|n_2)$.  We assume that $m_2+n_2\neq0$, although we could have $m_1=n_1=0$.
%	
%	If $m_2\neq0$ then there is a little Weyl group $W=W^{lit}\cong\Z/2=\langle\sigma\rangle$.  Further we see that in this case $n_2=m_1=0$. If $n\in\N$, and $V$ is a $W$-module, set $V^{W_n}=\{v\in V:\sigma(v)=(-1)^nv\}$.  If $m_2=0$ then we have that $W$ is trivial.  Write $\ol{\rho}$ for the restriction of the Weyl vector to $\a$, and set $n=(n_1+n_2)/2$.  Then we have that 
%	\[
%	HC(\AA_{(\g,\k)})=\{p\in S(\a)^{W_{n}}:p(\lambda+r\alpha)=(-1)^{r}p(\lambda-r\alpha) \text{ for }(\lambda+\rho,\alpha)=0, \ 1\leq r\leq n\}
%	\]
%\end{remark}
In addition, we have the following computation which we prove and subsumes part (iii):
\begin{thm}
	For the pair $(\g\l(m|n),\g\l(m-r|n)\times\g\l(r))$, set $t_i=\frac{1}{2}h_{\epsilon_i-\epsilon_{r+i}}$ (see \cref{section computation A pair} for more on the setup).  Then we have:
	\[
	HC(\AA_{(\g,\k)})=S(\a)^{W^{lit}_{\ol{\rho}}}\prod\limits_{\substack{1\leq i\leq r\\ 1\leq j\leq n}}(t_i-n+r-i+j).
	\]
\end{thm}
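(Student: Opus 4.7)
The plan is to apply \cref{thm even pairs} to the pair $(\g\l(m|n),\g\l(m-r|n)\times\g\l(r))$, which satisfies its hypotheses once $m\geq 2r$. That theorem immediately reduces the claim to showing that
\[
HC(v_{\k'})=c\prod_{1\leq i\leq r,\,1\leq j\leq n}(t_i-n+r-i+j)
\]
for some nonzero scalar $c\in \Bbbk^{\times}$, since by \cref{thm even pairs} one has $HC(\AA_{(\g,\k)})=S(\a)^{W^{lit}_{\ol{\rho}}}\cdot HC(v_{\k'})$.

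First I would match degrees and top-degree terms. For this pair $\dim\p_{\ol{1}}=2rn$, so by \cref{cor highest order term} the polynomial $HC(v_{\k'})$ has degree $rn$ and leading term (up to sign) $\prod\ol{h_{\alpha_{i,j}}}$ taken over representatives $\alpha_{i,j}$ of the $-\theta$-orbits on positive odd roots not fixed by $\theta$. Choosing an Iwasawa Borel for which these representatives may be taken to be $\alpha_{i,j}=\epsilon_i-\delta_j$ with $1\leq i\leq r$, $1\leq j\leq n$, and computing $\ol{h_{\epsilon_i-\delta_j}}=\ol{h_{\epsilon_i}}+\ol{h_{\delta_j}}=t_i+0=t_i$, one obtains that the leading term of $HC(v_{\k'})$ is $\pm\, t_1^{n}\cdots t_r^{n}$. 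The right-hand side of the claimed identity has the same degree $rn$ and the same leading term $t_1^{n}\cdots t_r^{n}$ (its factors form $n$ consecutive linear expressions in $t_i$ for each fixed $i$), so it suffices to prove that every linear factor $(t_i-n+r-i+j)$ divides $HC(v_{\k'})$.

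For the divisibility I would invoke \cref{iso_root_vanishing_general}, whose hypothesis $\theta\alpha+\alpha\notin\Delta$ is automatic for every odd isotropic root $\alpha=\epsilon_i-\delta_j$ of $\g\l(m|n)$, since the candidate sum $\epsilon_i+\epsilon_{r+i}-2\delta_j$ is never a root. Starting from a fixed Iwasawa Borel $\b_0$ and performing a sequence of odd reflections at simple isotropic roots, one produces further Iwasawa Borels $\b_{i,j}$ in which $\epsilon_i-\delta_j$ becomes simple. Applying \cref{iso_root_vanishing_general} in each $\b_{i,j}$ gives $\ol{h_{\epsilon_i-\delta_j}}=t_i$ dividing $HC_{\b_{i,j}}(v_{\k'})$; translating back through the accumulated $\ol{\rho}$-shift between $\b_{i,j}$ and $\b_0$ yields $(t_i+c_{i,j})\mid HC_{\b_0}(v_{\k'})$ for an explicit constant $c_{i,j}$. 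One then chooses the reflection chain so that $c_{i,j}=-n+r-i+j$ for all $(i,j)$. Since for fixed $i$ the $n$ resulting factors are $n$ consecutive integers in $t_i$, they are pairwise distinct, and hence the full product $\prod(t_i-n+r-i+j)$ of $rn$ linear expressions divides $HC(v_{\k'})$; combined with the matching degree and leading term this forces the claimed identity up to a nonzero scalar.

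The central obstacle is the combinatorial bookkeeping of the odd-reflection chain: one must verify that each reflection used is by a root which is both simple and isotropic in the \emph{current} Iwasawa Borel, and that the cumulative $\ol{\rho}$-shifts come out to precisely the translates $-n+r-i+j$ rather than some other arithmetic pattern. Carrying this out requires a careful analysis of how the Weyl vector $\ol{\rho}$ and the set of simple odd roots evolve along the reflection chain, and constitutes the arithmetic heart of the proof.
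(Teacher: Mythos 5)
Your first step (reducing to $HC(v_{\k'})$ via \cref{thm even pairs}), your degree count $\dim\p_{\ol{1}}/2=rn$, and your identification of the leading term $\pm t_1^n\cdots t_r^n$ via \cref{cor highest order term} are all correct and consistent with what the paper does. The problem lies entirely in the divisibility step.

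The divisibility argument does not go through as proposed, and the obstacle is not mere bookkeeping. Recall that the Harish--Chandra morphism $HC:\AA_{(\g,\k)}\to S(\a)$ depends only on the nilpotent algebra $\n$, i.e.~ only on the positive system $\ol{\Delta}^+$ of the \emph{restricted} root system; it is the projection along $\n\UU\g+\UU\g\k$ composed with the antipode of $\a$, with no built-in $\rho$-twist. An Iwasawa Borel $\b$ is, by definition, one for which $\Delta^+$ restricts to $\ol{\Delta}^+\sqcup\{0\}$; in the reduced pair $(\g\l(2r|n),\g\l(r|n)\times\g\l(r))$ this forces an ordering
\[
\epsilon_1<\cdots<\epsilon_r<(\text{some ordering of }\delta_1,\dots,\delta_n)<\epsilon_{2r}<\cdots<\epsilon_{r+1},
\]
since every root $\epsilon_i-\delta_j$ with $i\leq r$ restricts to the positive restricted root $\ol{\epsilon_i}$ and hence must be positive in $\b$. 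Consequently the only isotropic roots of the form $\epsilon_i-\delta_j$ that can ever be simple in an Iwasawa Borel are those with $i=r$, and \cref{iso_root_vanishing_general} can only ever produce the single divisor $t_r$. An odd reflection at a simple isotropic root with nonzero restriction (such as $\epsilon_r-\delta_1$) moves that root's negative into $\Delta^+$ and so \emph{destroys} the Iwasawa property; odd reflections at roots with zero restriction preserve the Iwasawa property but leave $\n$, and hence $HC$, unchanged, so there is no ``accumulated $\ol{\rho}$-shift'' to harvest. In short, the chain of Iwasawa Borels $\b_{i,j}$ with $\epsilon_i-\delta_j$ simple does not exist for $i<r$, and the $\rho$-shifted translation between Harish--Chandra morphisms you invoke does not occur in this (ghost-distribution) setting the way it does for the central Harish--Chandra homomorphism of $\UU\g$.

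The paper's own proof of this theorem abandons the representation-theoretic vanishing criterion altogether. After the same reduction via \cref{lemma reduction trick} and \cref{thm even pairs}, it takes the explicit PBW expression $v_{\k'}=\prod_{i,j}(\phi_{ij}-\psi_{ij})\prod_{i,j}(\alpha_{ij}-\beta_{ij})$ from Prop.~6.11 of \cite{sherman2021ghost} and computes the Harish--Chandra projection directly by commutator manipulations in $\UU\g$: $\psi_{ij}\in\n$ and $\alpha_{ij}+\beta_{ij}\in\k$ are absorbed, and then the factors $\alpha_{11},\alpha_{12},\dots$ are moved to the left one at a time, with all but the Cartan terms vanishing for parity reasons (a root vector reappearing twice in a monomial of odd generators kills the term). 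This inductive bookkeeping in $\UU\g$ is what produces the shifts $-n+r-i+j$; they do not come from a $\rho$-shift of the Harish--Chandra map. If you want a representation-theoretic alternative to that computation, you would need an input stronger than \cref{iso_root_vanishing_general} — for instance, a direct analysis of which highest-weight functions $f_\lambda$ lie in submodules $V(\mu)$ — rather than a chain of odd reflections.
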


See \cref{sec_intro_conj} for some remarks on the above computations, and a conjecture for the image $HC(\AA_{(\g,\k)})$ in all interlaced cases.

\subsection{$(\o\s\p(2|2n),\o\s\p(1|2n))$}

We present $\g=\o\s\p(2|2n)$ as
\[
\begin{bmatrix}t & 0 & \alpha^t & -\beta^t\\
	0 & -t & \sigma^t & -\tau^t\\\tau & \beta & m & p\\\sigma & \alpha & q & -m^t\end{bmatrix}.
\]
The involution realizing this pair can be given by conjugation by
\[
\begin{bmatrix}0 & 1 &\\1 & 0 &\\ &&\id\end{bmatrix}.
\]
The Cartan subspace $\a$ is given by the span of the matrix $t$ above, and thus $\a^*$ is spanned by a single weight $\epsilon$ where $\epsilon(t)=1$.  We thus identify $S(\a)= \Bbbk[t]$ and $\a^*$ with $\Bbbk$ via $a\epsilon\leftrightarrow a$.  We make the following choices:
\[
\n=\begin{bmatrix}0 & 0 & u^t & -r^t\\
	0 & 0 & 0 & 0\\0 & r & 0 & o\\0 & u & 0& 0\end{bmatrix}
\]
and
\[
\n^-=\begin{bmatrix}0 & 0 & 0 & 0\\
	0 & 0 & v^t & -s^t\\s & 0 & 0 & 0\\v & 0 & 0 & 0\end{bmatrix}.
\]
Let $\omega=\sum\limits_i s_iv_i\in\UU\n^-$.  One can show that for $1\leq r\leq n$ we have that $\omega^rf_{n+r}$ is non-zero of highest weight $(n-r)\epsilon$.  Now suppose that we write $\omega^r=k+p+m$ where $m\in\n\UU\g$, $p\in S(\a)$, and $k\in S(\a)(\UU\k)^+$.  Then clearly we have
\[
(\omega^rf_{n+r})(eK)=p(n+r).
\]
Thus
\[
f_{n-r}=\frac{1}{p(n+r)}\omega^rf_{n+r}.
\]
On the other hand, let $\phi\in\Aut(\g,\g_{\ol{0}})$ denote the interlacing automorphism of this pair given by $(-i)\id_{\g_{-1}}\oplus\id_{\g_0}\oplus i\id_{\g_{1}}$.  Then as discussed previously, $\phi$ fixes $\a$ pointwise, preserves $\n$, and sends $\k$ to $\k'$.  Thus $\phi(\omega^r)=k'+p+m'$ where $m'\in\n\UU\g$ and $k'\in S(\a)(\UU\k')^+$.  On the other hand $\omega\in S^2\n^-$, thus $\phi(\omega^r)=(-1)^r\omega^r$.  Therefore
\[
f_{n-r}=\frac{(-1)^r}{p(n+r)}\phi(\omega^r)f_{n+r}=k'f_{n+r}+(-1)^rf_{n+r}.
\]
It follows that $HC(D)(n+r)=(-1)^{r}HC(D)(n-r)$ for all $1\leq r\leq n$.  Let $\PP_{n}\sub \Bbbk[t]$ denote the collection of polynomials given by
\[
\PP_n:=\{p\in \Bbbk[t]:p(n+r)=(-1)^{r}p(n-r):1\leq r\leq n\}.
\]  
Then we have shown that $HC(\AA)\sub\PP_n$.  One can check that $\PP_n$ is of codimension $n$ in $ \Bbbk[t]$.  Here $\ZZ_{G_0/K_0}=S(\a)$ and $\deg HC(v_{\k'})=n$, so that $HC(\AA)$ is also of codimension $n$ in $ \Bbbk[t]$, so we obtain that:
	\[
	HC(\AA)=\PP_n:=\{p\in \Bbbk[t]:p(n+r)=(-1)^{r}p(n-r):1\leq r\leq n\}.
	\]
	Explicitly, we may write this as
	\[
	\Bbbk[t(t-2n)]\langle (t-1)(t-3)\cdots(t-(2m-1)),t(t-2)\cdots(t-2n)\rangle,
	\]
	i.e.~ the $ \Bbbk[t(t-2n)]$-module generated by the two written polynomials. Here $t(t-2n)=HC(\Omega)$, where $\Omega$ is the Casimir of this space.

\subsection{$(\o\s\p(m|2n),\o\s\p(m-1|2n))$, $m\geq3$}  For this pair, we use \cref{lemma reduction trick} and our computation for $(\o\s\p(2|2n),\o\s\p(1|2n))$, which tells us that 
\[
HC(v_{\k'})=(t-1)\cdots(t-(2n-1)).
\]
Thus
\[
HC(\AA_{(\g,\k)})= \Bbbk[t(t-2n+m-2)]\langle(t-1)(t-3)\cdots(t-(2n-1))\rangle.
\]
Here $t(t-2n+m-2)=HC(\Omega)$, where $\Omega$ is the Casimir.

\subsection{$(\o\s\p(m|2n),\o\s\p(m|2n-2)\times\s\p(2))$}  In this case the involution on the root system may be given by $\delta_1\leftrightarrow-\delta_{2}$, and thus $\a^*$ is spanned by $\delta_1+\delta_2$.  With respect to an Iwaswa Borel subalgebra, the simple module $L(\delta_1+\delta_2)$ of highest weight $\delta_1+\delta_2$ is a quotient of $S^2\Bbbk^{m|2n}$, where $\Bbbk^{m|2n}$ is the standard module for $\o\s\p(m|2n)$.  One can check directly that it admits a $\k$-coinvariant, and thus appears in $\Bbbk[G/K]$; it follows that $\Lambda^+=\{n(\delta_1+\delta_2)\}_{n\in\N}$.  

Now we use \cref{lemma reduction trick} to reduce the computation of $HC(v_{\k'})$ to the case of \linebreak $(\o\s\p(m|4),\o\s\p(m|2)\times\s\p(2))$.  In this case, $c(\delta_1+\delta_2)$ has the same central character as $d(\delta_1+\delta_2)$ if and only if $d=c$ or $d=m-3-c$; the map $c\mapsto m-3-c$ is exactly the $\rho$-shifted action of the little Weyl group.  By \cref{thm even pairs}, $HC(\ZZ_{(\g,\k)})$ is equal to the invariants of the little Weyl group, or equivalently the polynomial algebra in the Casimir.  By \cref{ghost algebra prop}, $HC(v_{\k'})^2\in HC(\ZZ_{(\g,\k)})$; it follows that the zeros of $HC(v_{\k'})$ must be stable under the $\rho$-shifted action of the little Weyl group.

Now assume that $m\geq 2$; then by the above considerations the simple modules of highest weight $c(\delta_1+\delta_2)$ must appear for $0\leq c\leq m-2$.  All these simple modules appear as subquotients of $(\Bbbk^{m|4})^{\otimes r}$ for $r\leq 2(m-2)$; we claim that $I_{\k'}(\Bbbk)$, the injective indecomposable on the trivial module for $\k'$, does not appear in any such power.  Since $\Bbbk^{m|4}$ restricts to $\k'$ as $\Bbbk^{m|2}\oplus\Bbbk^{0|2}$, we in fact want to show that $I_{\k'}(\Bbbk)$ does not appear in $(\Bbbk^{m|2})^{\otimes r}$ for $r\leq 2(m-2)$.

To prove this we use results on the structure of the tensor powers of $\Bbbk^{m|2n}$ as a module over $OSp(m|2n)$.  There are two injective indecomposable $OSp(m|2n)$-modules $I$ whose restriction to $SOSp(m|2n)$ is given by $I_{SOSp(m|2n)}(\C)$, which are labeled in \cite{ehrig2022osp} as $P(0,+)$ and $P(0,-)$. Thus we want to determine the smallest $r$ such that $P(0,\pm)$ appear in $(\Bbbk^{m|2n})^{\otimes r}$.  By Theorem 12.2 of \cite{ehrig2022osp}, $P(0,-)$ first appears for the first time in $(\Bbbk^{m|2n})^{\otimes m(2n+1)}$.  When $n=1$ we get $3m$ which is indeed larger than $2m-4$. 

It remains to deal with $P(0,+)$.  By Theorem 7.3 of \cite{comes2017thick}, the indecomposable summands $R(\lambda)$ of $(\Bbbk^{m|2n})^{\otimes r}$ are parametrized by certain partitions $\lambda$ of size at most $r$.  Using Lemma 7.16 \cite{comes2017thick} and the rules of Section 14 \cite{ehrig2022osp} we have that the projective cover of the trivial module is given by the indecomposable representation $R((m-1)^{2n})$.  By Corollary 7.14 of \cite{comes2017thick}, $P(0,+)$ occurs as a direct summand in $V^{\otimes 2(m-1)n}$ and not in any smaller tensor power.  If $n=1$, this means that it does not appear as a direct summand in $(\Bbbk^{m|2n})^{\otimes k}$ for $k<2(m-1)$; since $2m-4<2m-2$, we have obtained a sufficient bound.

By \cref{lemma branching} it follows that $HC(v_{\k'})$ must vanish on all such $c$; and thus by the above-stated invariance property of its zeroes, we see that $HC(v_{\k'})$ vanishes at $c=-1,0,\dots,m-2$; by degree considerations we obtain that (up to a scalar)
\[
HC(v_{\k'})=(t+1)t(t-1)\cdots(t-(m-2)),
\]
where $t=\frac{1}{2}h_{\delta_1+\delta_2}$.  

Now if $m=1$, then we have the pair $(\o\s\p(1|4),\o\s\p(1|2)\times\s\p(2))$; in this case we have (up to a scalar)
\[
HC(v_{\k'})=t+1.
\]
This can be checked directly, or one can use that it is a degree one polynomial whose square is invariant under the $\rho$-shifted action of $W$, which completely determines it up to scalar.

Now we return to the general case $(\o\s\p(m|2n),\o\s\p(m|2n-2)\times\s\p(2))$; the Casimir $\Omega$ has (up to scalar) $HC(\Omega)=t(t+2n-m-1)$.  From this we obtain:
\[
HC(\AA_{(\g,\k)})=\Bbbk[t(t+2n-m-1)]\langle (t+1)t(t-1)\cdots(t-(m-2))\rangle.
\]

\subsection{$(\g\l(m|2n),\g\l(m-r|n)\times\g\l(r))$, $m\geq 2r$}\label{section computation A pair}	Using \cref{lemma reduction trick} and \cref{thm even pairs}, we reduce the question to the computation of $HC(v_{\k'})$ when the pair is $(\g\l(2r|n),\g\l(r|n)\times\g\l(r))$.  We present this algebra as
\[
\begin{bmatrix}a & b& \alpha\\c & d & \beta\\\phi & \psi & a\end{bmatrix}.
\]
Then we have the involution $\theta$ which is the permutation $(1,r+1)(2,r+2)\cdots(r,2r)$.	The fixed subalgebra $\k$ is isomorphic to $\g\l(r|n)\times \g\l(r)$, explicitly presented as:
\[
\k=\begin{bmatrix}a & b& \gamma \\b & a & \gamma \\ \sigma & \sigma & b\end{bmatrix}
\]
where $\gamma$ is of size $r\times n$ and $\sigma$ of size $n\times r$.

The $(-1)$-eigenspace $\p$ is given by
\[
\p=\begin{bmatrix}t & u& \tau\\-u & -t & -\tau\\\eta & -\eta & 0\end{bmatrix}.
\]
Thus a Cartan subspace is given by
\[
\a=\begin{bmatrix}D & 0 & 0\\0 & -D & 0\\0 & 0 & 0\end{bmatrix}
\]
where $D$ is $r\times r$ and diagonal. 	Let
\[
\n^+=\n=\begin{bmatrix}0 & e & \alpha\\0 & 0 & 0\\0 & \psi & 0\end{bmatrix},
\]
with positive root system for $\g$ given by
\[
\epsilon_1-\epsilon_2,\dots,\epsilon_{r-1}-\epsilon_r,\epsilon_r-\delta_1,\dots,\delta_{n-1}-\delta_n,\delta_n-\epsilon_{2r},\dots,\epsilon_{r+2}-\epsilon_{r+1}.
\]

We see that for this choice of Borel, the Weyl vector $\rho$ restricts to $\a$ as:
\[
\rho|_{\a}=\sum\limits_{i=1}^{r}\left(\frac{2(r-i)-n+1}{2}\right)(\epsilon_i-\epsilon_{r+i}).
\]
Here $\a^*$ is spanned by weights of the form:
\[
a_1(\epsilon_1-\epsilon_{r+1})+a_2(\epsilon_2-\epsilon_{r+2})+\dots+a_{r}(\epsilon_r-\epsilon_{2r}).
\]
By Prop. 6.11 of \cite{sherman2021ghost}, we may write:
\[
v_{\k'}=\prod\limits_{i,j}(\phi_{ij}-\psi_{ij})\prod\limits_{i,j}(\alpha_{ij}-\beta_{ij}).
\]
Now $\psi_{ij}\in\n^+$ and for all $i,j$ and since $\psi_{ij}$ commutes with $\phi_{k\ell}$, the above expression has the same Harish-Chandra projection (up to a sign) as: 
\[
\prod\limits_{i,j}\phi_{ij}\prod\limits_{i,j}(\alpha_{ij}-\beta_{ij}).
\]
We additionally have $\alpha_{ij}+\beta_{ij}\in\k$, so up to scalar the above has the same Harish-Chandra projection as:
\[
\prod\limits_{i,j}\phi_{ij}\prod\limits_{i,j}\alpha_{ij}
\]
From here the proof works by deal with all terms with index 1, and then concluding by induction.  Namely, begin by writing the above as
\[
\left(\prod\limits_{(i,j)\neq(1,1)}\phi_{ij}\right)\phi_{11}\alpha_{11}\left(\prod\limits_{(i,j)\neq(1,1)}\alpha_{ij}\right).
\]
If we move $\alpha_{11}$ all the way to the left (which we do since it lies in $\n$), we obtain one term given by
\[
\left(\prod\limits_{(i,j)\neq(1,1)}\phi_{ij}\right)\left(\prod\limits_{(i,j)\neq(1,1)}\alpha_{ij}\right)(h_{\epsilon_1-\delta_1}+n-r),
\]
and the rest of the terms contain $[\alpha_{11},\phi_{ij}]$ for $(i,j)\neq(1,1)$.  Now this commutator is nonzero in two cases: in one case $j=1$, and we get the even root vector of weight $\delta_i-\delta_1$, which lies in $\k$.  Thus we move it all the way to the right; when we move it past root vectors $\phi_{kl}$, we get something nonzero if and only if $k=1$, in which case we get $\phi_{il}$ for $i\geq 1$; but then $\phi_{il}$ will appear twice, and so we obtain 0.  If we move it past $\alpha_{kl}$, we get something nonzero if and only if $\ell=i$, in which case we obtain $\alpha_{ki}$ for $k\geq 1$ which will appear twice, so we again get 0.  Thus these terms all vanish.

The other case is if $i=1$ in which case $[\alpha_{11},\phi_{ij}]$ is a an even root vector of weight $\epsilon_{1}-\epsilon_j$, which lies in $\n$.  Thus we want to move it all the way to the left; doing so, we pick up new terms only when moving it past $\phi_{kl}$ for $l=1$, in which case we obtain $\phi_{kj}$, which will then appear twice, so the term becomes 0.  Thus only our first term written above survives.  

One may now continue and apply the same argument after moving $\alpha_{12}$ all the way to the right in order to obtain
\[
\left(\prod\limits_{(i,j)\neq(1,1),(2,1)}\phi_{ij}\right)\left(\prod\limits_{(i,j)\neq(1,1),(1,2)}\alpha_{ij}\right)(h_{\epsilon_1-\delta_1}+n-r)(h_{\epsilon_1-\delta_2}+n-r-1).
\]
Continue in this way, we obtain
\[
\left(\prod\limits_{j\neq 1}\phi_{ij}\right)\left(\prod\limits_{i\neq 1}\alpha_{ij}\right)\prod\limits_{j=1}^n(h_{\epsilon_1-\delta_j}+n-r-j+1).
\]
Now we deal with the terms with roots of the form $\epsilon_i-\delta_1$ for $1<i\leq r$, starting with $i=1$ and moving up in $i$.  Working in the same fashion as above, and we obtain
\[
\left(\prod\limits_{i,j\neq 1}\phi_{ij}\right)\left(\prod\limits_{i,j\neq 1}\alpha_{ij}\right)\prod\limits_{j=1}^n(h_{\epsilon_1-\delta_j}+n-r-j+1)\prod\limits_{i=2}^{r}(h_{\epsilon_i-\delta_1}+n-r+i-1).
\]
Now we can conclude inductively to obtain:
\[
\prod\limits_{\substack{1\leq i\leq r\\ 1\leq j\leq n}}(h_{\epsilon_i-\delta_j}+n-r+i-j).
\]
Now set $t_i=\frac{1}{2}h_{\epsilon_i-\epsilon_{r+i}}$; in particular $(\epsilon_i-\epsilon_{r+i})(t_i)=1$.  Applying the antipode on $\a$ we have shown that (up to scalar):
\[	
HC(v_{\k'})=\prod\limits_{\substack{1\leq i\leq r\\ 1\leq j\leq n}}(t_i-n+r-i+j).
\]

\textsc{\footnotesize Dept. of Mathematics, Ben Gurion University, Beer-Sheva,	Israel} 

\textit{\footnotesize Email address:} \texttt{\footnotesize xandersherm@gmail.com}

\end{document}